\documentclass[11pt]{amsart}

\usepackage{amsmath, amssymb, amsbsy, amscd, mathtools}
\usepackage{times, mathrsfs, euscript}
\usepackage{epsf, overpic, psfrag, stmaryrd}
\usepackage{caption, subcaption}
\usepackage[all]{xy}
\usepackage{hyperref}
\usepackage[dvipsnames]{xcolor}
\usepackage{enumitem, bbm}
\usepackage{tikz, tikz-cd}
\usetikzlibrary{decorations.markings}
\tikzcdset{scale cd/.style={
    every label/.append style={scale=#1},
    cells={nodes={scale=#1}}}}

\usepackage{array}
\usepackage{todonotes}
\usepackage{tikz}

\usepackage{bm}
\newtheorem{theorem}{Theorem}[section]

\newtheorem{claim}[theorem]{Claim}

\newtheorem{lemma}[theorem]{Lemma}
\newtheorem{proposition}[theorem]{Proposition}

\theoremstyle{definition}
\newtheorem{definition}[theorem]{Definition}

\newtheorem{remark}[theorem]{Remark}
\newtheorem{assumption}[theorem]{Assumption}

\numberwithin{equation}{subsection}

\DeclareMathAlphabet{\mathpgoth}{OT1}{pgoth}{m}{n}

\DeclareMathAlphabet{\mathpzc}{OT1}{pzc}{m}{it}

\newcommand{\W}{\mathbb{W}}

\newcommand{\be}{\begin{enumerate}}
\newcommand{\ee}{\end{enumerate}}
\newcommand{\op}{\operatorname}

\newcommand{\hh}{\mathbb H}

\newcommand{\del}{\partial} 

\newcommand{\dbar}{\bar\del}
\DeclareMathOperator{\ind}{Ind}

\newcommand{\iT}[1]{#1}

\newcommand{\obstructionSection}{\mathfrak{s}}

\newcommand{\chartData}[1]{\mathcal{C}_{\iT{I}}}

\input pdf-trans
\newbox\qbox
\def\usecolor#1{\csname\string\color@#1\endcsname\space}
\newcommand\bordercolor[1]{\colsplit{1}{#1}}
\newcommand\fillcolor[1]{\colsplit{0}{#1}}
\newcommand\outline[1]{\leavevmode%
  \def\maltext{\mydelim #1\mydelim}%
  \setbox\qbox=\hbox{\maltext}%
  \boxgs{Q q 2 Tr \bbthickness\space w \fillcol\space \bordercol\space}{}%
  \copy\qbox%
}

\newcommand\colsplit[2]{\colorlet{tmpcolor}{#2}\edef\tmp{\usecolor{tmpcolor}}%
  \def\tmpB{}\expandafter\colsplithelp\tmp\relax%
  \ifnum0=#1\relax\edef\fillcol{\tmpB}\else\edef\bordercol{\tmpC}\fi}
\def\colsplithelp#1#2 #3\relax{%
  \edef\tmpB{\tmpB#1#2 }%
  \ifnum `#1>`9\relax\def\tmpC{#3}\else\colsplithelp#3\relax\fi
}
\bordercolor{black}
\fillcolor{white}
\newcommand\bbthickness{.5}

\newcommand{\crit}{\op{Crit}}
\newcommand{\cP}{\mathsf} 

\newcommand{\grad}{\op{grad}}
\newcommand{\dd}{\mathrm{d}}

\newcommand{\C}{\mathbb{C}}
\newcommand{\R}{\mathbb{R}}

\newcommand{\iu}{\sqrt{-1}}

\newcommand{\M}{\mathcal{M}}

\newcommand{\tup}[1]{\bm{#1}}

\newcommand{\p}{\partial}

\newcommand{\td}{}

\newcommand{\lm}{w}
\newcommand{\tdlm}{{\td{\lm}}}
\newcommand{\lv}{\zeta}
\newcommand{\tdlv}{\td{\lv}}
\newcommand{\constantVector}{\td{\varpi}}

\newcommand{\app}{u_{\op{app}}}

\newcommand{\parameterSpace}{\mathcal{K}}

\newcommand{\norm}[1]{\left\lVert#1\right\rVert}
\newcommand{\eps}{\varepsilon}

\makeatletter
\newcommand{\labitem}[2]{%
  \def\@itemlabel{#1}
  \item
  \def\@currentlabel{#1}\label{#2}}
\makeatother

\begin{document}

\title[From Morse Trees to $J$-Holomorphic Discs -- Rigid Y-Graphs]{From Morse Trees to Pseudo-Holomorphic Discs -- Y-Graphs}

\author{Erkao Bao}
\address{School of Mathematics, University of Minnesota, Minneapolis, MN 55455}
\email{bao@math.umn.edu}
\urladdr{https://erkaobao.github.io/math/}
\thanks{Erkao Bao is supported by NSF Grants DMS-2404529.}

\author{Ke Zhu}
\address{Department of Mathematics and Statistics, Minnesota State University Mankato, Mankato, MN 56001}
\email{ke.zhu@mnsu.edu}
\urladdr{https://faculty.mnsu.edu/kezhu/}

\keywords{Symplectic topology, holomorphic curves, Morse theory, Y-graphs}

\begin{abstract}
The correspondence between Morse flow trees and $J$-holomorphic discs was established by Fukaya--Oh \cite{fukaya1997zeroloop} and Ekholm \cite{ekholm2007morse}. We revisit this correspondence and present an alternative approach, designed to generalize naturally to the equivariant setting and to certain Morse graph configurations. The main ingredient is a gluing construction that produces $J$-holomorphic discs from Morse flow trees. A well-known difficulty is that this gluing is of Morse--Bott type; in other words, under an appropriate Fredholm framework, pieces to be glued together are obstructed. We resolve this via the obstruction bundle gluing technique of Hutchings--Taubes \cite{hutchings2009gluing}. Given a rigid, transversely cut-out Y-shaped Morse flow tree, we show that for every sufficiently small $\epsilon > 0$ there exists at least one corresponding $J$-holomorphic discs in the cotangent bundle, with boundaries inside corresponding Lagrangian submanifolds of height $\epsilon$. This is the first paper in a series; subsequent work will extend the result to all ribbon trees and to moduli spaces of all dimensions and establish the injectivity and surjectivity of the correspondence.
\end{abstract}

\maketitle

\setcounter{tocdepth}{1}
\tableofcontents

\section{Introduction and Main Results}

\subsection{Background}
The relationship between Morse theory and $J$-holomorphic curves has its roots in Floer's foundational work \cite{floer1989witten}, where holomorphic strips in the cotangent bundle $T^*M$ are shown to correspond bijectively to gradient trajectories of a Morse function on $M$. This correspondence was substantially generalized by Fukaya--Oh \cite{fukaya1997zeroloop}, who showed that moduli spaces of $J$-holomorphic polygons in $T^*M$ with boundary on conormal Lagrangians are diffeomorphic to moduli spaces of gradient flow trees on $M$, establishing a Morse-theoretic model for the $A_\infty$-structure of the Fukaya category in cotangent bundles. Ekholm \cite{ekholm2007morse} further extended the correspondence to the contact setting, showing that rigid pseudo-holomorphic discs in $T^*M$ correspond bijectively to rigid Morse flow trees of a Legendrian submanifold, giving a combinatorial algorithm for computing Legendrian contact homology. A parallel structure arises in spectral networks \cite{gmn2013spectral}, which encode BPS degeneracies and wall-crossing data through a combinatorial graph structure on a Riemann surface. The connection between spectral networks and holomorphic curve theory has been pursued more recently by Honda--Tian--Yuan \cite{honda2026spectral}. There are growing needs to generalize the above correspondences to the equivariant setting and to more general graph configurations, some of which have obstructions, and this motivates our alternative obstruction bundle gluing approach.

\subsection{Setup}

Let $M$ be a closed $n$-dimensional manifold. Fix a Riemannian metric $g$ on $M$ and a tuple of smooth functions $\tup{f} = (f_1, f_2, f_3)$ on $M$. We say $\tup{f}$ is \emph{Morse} if $f_a - f_b$ is a Morse function for all distinct indices $a, b$. A \emph{tuple of critical points} $\tup{\cP{p}} = (\cP{p}_1, \cP{p}_2, \cP{p}_3)$ consists of points
\[
\cP{p}_1 \in \crit(f_1 - f_3), \qquad
\cP{p}_2 \in \crit(f_2 - f_1), \qquad
\cP{p}_3 \in \crit(f_3 - f_2).
\]

\subsubsection*{Morse flow Y-trees}

A \emph{Morse flow Y-tree} with asymptotic conditions $\tup{\cP{p}}$ consists of a basepoint $m_0 \in M$ and three half-infinite gradient flow lines: for $i = 1, 2, 3$,
\[
\chi_i \colon [0,\infty) \to M, \qquad
\frac{d\chi_i}{ds} = -\grad_g(f_i - f_{i-1})(\chi_i),
\]
satisfying
\[
\chi_i(0) = m_0, \qquad \lim_{s \to \infty} \chi_i(s) = \cP{p}_i.
\]
We denote by $\M(M, g; \tup{f}, \tup{\cP{p}})$ the moduli space of Morse flow Y-trees with asymptotic conditions $\tup{\cP{p}}$. Its virtual dimension is
\[
\op{virdim}\, \M(M, g;\, \tup{f},\, \tup{\cP{p}})
= \sum_{i=1}^3 \ind(\cP{p}_i) - 2n,
\]
where $\ind(\cP{p}_i)$ denotes the Morse index of $\cP{p}_i$ as a critical point of $f_i - f_{i-1}$.
By Proposition~\ref{prop: transversality of Morse trees}, for generic $\tup{f}$ and $g$, $\M(M, g; \tup{f}, \tup{\cP{p}})$ is a smooth manifold of dimension $\op{virdim}\, \M(M, g;\, \tup{f},\, \tup{\cP{p}})$.
In particular, if
\[
\sum_{i=1}^3 \ind(\cP{p}_i) = 2n,
\]
then $\M(M, g; \tup{f}, \tup{\cP{p}})$ is a smooth, compact, zero-dimensional manifold.

\subsubsection*{$J$-holomorphic discs}

Let $X = T^*M$ denote the cotangent bundle of $M$, equipped with the canonical symplectic form $\omega$. For each $\epsilon > 0$, define the Lagrangian submanifolds
\[
\Lambda_i^\epsilon = \op{graph}(\epsilon\,\dd f_i) \subset T^*M, \qquad i = 1, 2, 3,
\]
and set $\tup{\Lambda}^\epsilon = (\Lambda_1^\epsilon, \Lambda_2^\epsilon, \Lambda_3^\epsilon)$. A \emph{tuple of intersection points} $\tup{\cP{p}}^\epsilon = (\cP{p}_1^\epsilon, \cP{p}_2^\epsilon, \cP{p}_3^\epsilon)$ consists of points $\cP{p}_i^\epsilon \in \Lambda_{i-1}^\epsilon \cap \Lambda_i^\epsilon$ (indices mod $3$), which correspond canonically to the critical points $\cP{p}_i \in \crit(f_i - f_{i-1})$ via the identification $\Lambda_{i-1}^\epsilon \cap \Lambda_i^\epsilon \cong \crit(f_i - f_{i-1})$.

The metric $g$ induces a canonical almost complex structure $J$ on $X$: indeed, the Levi-Civita connection gives a splitting $T_{(m,\alpha)}X = T_mM \oplus T^*_mM$, and we define $J(v \oplus 0) = 0 \oplus \beta$ where $\beta(\cdot) = \langle v, \cdot\rangle_g$. The almost complex structure $J$ is compatible with $\omega$ in the sense that $\omega(\cdot, J\cdot)$ defines a Riemannian metric on $X$.

Let $\Sigma$ be the closed unit disk with three boundary marked points $z_1, z_2, z_3 \in \partial\Sigma$, arranged counterclockwise. Set $\dot\Sigma = \Sigma \setminus \{z_1, z_2, z_3\}$. Let $c_i$ be the open boundary arc of $\partial\Sigma \setminus \{z_1, z_2, z_3\}$ between $z_i$ and $z_{i-1}$. A \emph{$J$-holomorphic disc} with boundary conditions $\tup{\Lambda}^\epsilon$ and asymptotic conditions $\tup{\cP{p}}^\epsilon$ is a continuous map $u \colon \Sigma \to X$ satisfying
\[
\overline{\partial}_J u = 0 \quad \text{on } \dot\Sigma,
\qquad
u(z_i) = \cP{p}_i^\epsilon, \qquad
u(c_i) \subset \Lambda_i^\epsilon,
\]
where $\overline{\partial}_J u = \frac{1}{2}(\dd u + J \circ \dd u \circ j)$ and $j$ is the standard complex structure on $\Sigma$. Two such maps are equivalent if they differ by a biholomorphism of $(\Sigma, \{z_1, z_2, z_3\})$. We denote by $\M(X, J; \tup{\Lambda}^\epsilon, \tup{\cP{p}}^\epsilon)$ the moduli space of equivalence classes of $J$-holomorphic discs.

\subsection{Main Result}
As $\epsilon \to 0$, the Lagrangians $\Lambda_i^\epsilon$ collapse to the zero section of $T^*M$, and the moduli space $\M(X, J; \tup{\Lambda}^\epsilon, \tup{\cP{p}}^\epsilon)$ degenerates to $\M(M, g; \tup{f}, \tup{\cP{p}})$ in a suitable Gromov sense (\cite{fukaya1997zeroloop}, \cite{ekholm2007morse}, and \cite{ruan2006fukayacategory}). For the case of collapsing to a Lagrangian submanifold in a general symplectic manifold, see \cite{cant2023adiabatic}. The main result of this paper goes in the opposite direction: for each Morse flow Y-tree $\tup{\chi} \in \M(M, g; \tup{f}, \tup{\cP{p}})$, we construct a $J$-holomorphic disc converging to $\tup{\chi}$ as $\epsilon \to 0$.

\begin{theorem}\label{thm: main}
+Let $M$ be a closed $n$-dimensional manifold. Let $\tup{f}$ be a Morse tuple, and let $g$ be a Riemannian metric on $M$ such that $(\tup{f}, g)$ is Morse--Smale. Let $\tup{\cP{p}}$ be a tuple of critical points with $\sum_i \ind(\cP{p}_i) = 2n$. Let $\tup{\chi} = (\chi_1, \chi_2, \chi_3) \in \M(M, g; \tup{f}, \tup{\cP{p}})$ be a Morse flow Y-tree with common vertex $m_0 = \chi_1(0) = \chi_2(0) = \chi_3(0)$. Assume Assumptions~\ref{assumption: flat} and~\ref{assumption: linear}. Then, for all sufficiently small $\epsilon > 0$, there exists at least one $J$-holomorphic disc $u^\epsilon \in \M(X, J; \tup{\Lambda}^\epsilon, \tup{\cP{p}}^\epsilon)$ such that $u^\epsilon$ converges to $\tup{\chi}$ as $\epsilon \to 0$.
\end{theorem}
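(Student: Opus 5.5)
We construct $u^\epsilon$ by gluing, following the obstruction bundle gluing scheme of Hutchings--Taubes. Rescale the fiber coordinate by $\epsilon$, so that $\Lambda_i^\epsilon$ corresponds to $\operatorname{graph}(\dd f_i)$. The Morse flow lines $\chi_i$ then lift to approximate $J$-holomorphic strips. Near the ends, each strip $[0,\infty)\times[0,1]\ni(s,t)$ (in the rescaled domain variable $s/\epsilon$) is modeled on the Floer strip
\[
(s,t)\mapsto \bigl(\chi_i(\epsilon s),\ \epsilon\,\dd\bigl((1-t)f_{i-1}+t f_i\bigr)(\chi_i(\epsilon s))\bigr).
\]
This strip solves $\dbar_J u=0$ up to an error of order $O(\epsilon^2)$ in $C^0$, thanks to Assumption~\ref{assumption: flat}.

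Near the vertex $m_0$, after blowing up by $1/\epsilon$, the three strips meet a local model. This model is a holomorphic disc in $T_{m_0}M\otimes\C\cong\C^n$ with boundary on three affine Lagrangian planes $\operatorname{graph}(\dd f_i(m_0))$. These planes are parallel translates of $\R^n$, and Assumption~\ref{assumption: linear} makes the $f_i$ effectively linear near $m_0$. The model is explicit: on the strip-like ends it is a constant plus $\iu$ times a linear function, and it is a Morse--Bott family parametrized by translations in $\R^n$, i.e.\ by the position of the vertex. Pregluing the three half-strips to the vertex piece with cutoff functions gives $\app$, parametrized by a vertex position $m\in M$ near $m_0$ and by the gluing lengths. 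The error satisfies $\norm{\dbar_J \app}=O(\epsilon^{2})$ in suitable weighted norms.

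Next we set up the linearized operator $D_{\app}$ on weighted Sobolev spaces, with small negative weights at the three ends. We decompose it using the splitting $T_mM\oplus T_m^*M$ into pieces. The vertex piece has cokernel $\mathcal O\cong\R^n$, coming from the Morse--Bott degeneracy. This is exactly the obstruction, which we identify with $T_{m_0}M$. On the complement of this cokernel, we prove a uniform right-inverse estimate, with constants independent of $\epsilon$, by the standard argument by contradiction using limits as $\epsilon\to 0$. We then solve the equation modulo the obstruction by a contraction mapping. This yields a correction $\psi$ with $\norm{\psi}=O(\epsilon^2)$ and a section $\obstructionSection(m)=\Pi_{\mathcal O}\dbar_J\exp_{\app}(\psi)$ of the obstruction bundle over the vertex-position parameter space $\parameterSpace\subset M$.

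The main obstacle is computing the leading term of $\obstructionSection$. We expect
\[
\obstructionSection(m)=\epsilon^{2}\, c\,\bigl(\chi_1(\cdot;m)\text{-},\chi_2\text{-},\chi_3\text{-mismatch}\bigr)+o(\epsilon^2).
\]
More precisely, pairing the error against cokernel elements should give, at order $\epsilon$ after rescaling, the map $\parameterSpace\to(T M)^{\oplus 3}/\Delta$ whose zeros are exactly the points where the three flow lines emanating from $m$ reach the unstable/stable manifolds. In other words, the leading term is the defining section of the Morse tree moduli space.

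To carry this out, we first compute the pairing of the cutoff errors on each strip with the asymptotic forms of the cokernel elements. This pairing is a boundary-flux computation, which is where Assumptions~\ref{assumption: flat} and~\ref{assumption: linear} enter. We then show that the correction $\psi$ contributes only at higher order. Since $\tup{\chi}$ is transversely cut out and rigid by Proposition~\ref{prop: transversality of Morse trees}, the leading section has a nondegenerate zero at $\tup{\chi}$, so its local degree is $\pm1$. A degree argument on a small ball then shows that $\obstructionSection$ has a zero for small $\epsilon$, and $\exp_{\app}(\psi)$ at that zero is the desired $u^\epsilon$. Its convergence to $\tup{\chi}$ follows from the construction.
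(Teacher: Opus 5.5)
Your strategy is a finite-dimensional reduction over the vertex position, with the Morse-tree defining section as the leading term of the obstruction section. That differs from the paper's route, but as written it has a genuine gap at its foundation: the obstruction space is misidentified. The linearized boundary condition at the vertex is $\R^n$ on all three arcs. With small \emph{negative} weights at the three ends, $\dbar$ on the three-punctured disc has index $-2n+3n=n$. Its kernel is exactly the constants $\R^n$, since a bounded holomorphic map with real boundary values is constant by Schwarz reflection and removable singularities. Hence its cokernel is \emph{zero}. With positive weights, as in the paper, it has kernel $0$ and cokernel of dimension $2n$ (Proposition~\ref{prop: computation of cokernel}). So the Morse--Bott degeneracy shows up as the translation kernel, which is the very direction you use as the parameter $m$, and not as an $n$-dimensional cokernel. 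Your reduction uses translations twice. The $n$-dimensional obstruction that actually pairs with the vertex position lives on the edges: it is $\bigoplus_i N_{m_0}\mathscr{D}_{\cP{p}_i}$, of dimension $\sum_i(n-\ind(\cP{p}_i))=n$. It only appears once the edge variations are normalized at the vertex end. In the paper this is done by the zero-average condition on the cord $\{0\}\times[0,1]$, which makes $D_{w_i}$ injective with cokernel of dimension $n-\ind(\cP{p}_i)$ (Proposition~\ref{prop: fredholm index of edge}). Your sketch gives no such normalization and no coupling of vertex and edge variations across the neck. So the complement on which you claim a uniform right inverse, and hence $\obstructionSection$ itself, is not defined. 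The ``standard'' contradiction argument also hides real work: the $\epsilon$-weighted Sobolev constants blow up like $\epsilon^{-1/p}$ (Proposition~\ref{prop:app-weighted-half-strip-sobolev}), so you would have to rule out loss of mass into the necks and into the fiber directions.

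The step you call the main obstacle is only conjectured, and its stated form cannot be right. The target $(TM)^{\oplus 3}/\Delta$ is $2n$-dimensional, while a degree argument over the $n$-dimensional space of vertex positions needs an $n$-dimensional target. The correct defining section is $m\mapsto$ (normal components) $\in\bigoplus_i N_m\mathscr{D}_{\cP{p}_i}$. You also never define $\app(m)$ for $m$ off the tree. Flow lines from such $m$ do not converge to $\cP{p}_i$, so the edges must be modified to reach $\cP{p}_i^\epsilon$. It is exactly this modification whose pairing with the edge cokernel would produce the defining section. Two smaller inaccuracies:
\begin{itemize}
\item the local model is asymptotic on its ends to $\td{L}_i(s,t)=(s+\iu t)(c_{i+1}-c_i)+\iu c_i$, whose real part grows linearly in $s$, so it is not ``a constant plus $\iu$ times a linear function'';
\item the $O(\epsilon^2)$ edge error comes from the adiabatic structure (Lemma~\ref{lemma: dbar J estimate for edge model}), not from flatness, which holds only near $m_0$.
\end{itemize}

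For comparison, the paper avoids both the glued-operator estimate and any leading-term computation. It keeps the vertex at $m_0$ and fills the $2n$-dimensional vertex cokernel with end constants $\tdlv_i\in T_{m_0}\mathscr{D}_{\cP{p}_i}$. This is an isomorphism precisely because $\bigcap_i T_{m_0}\mathscr{D}_{\cP{p}_i}=0$ (Proposition~\ref{prop: add elements to fill cokernel}). It fills the $n$-dimensional edge cokernel with $\constantVector_i\in T^\perp_{m_0}\mathscr{D}_{\cP{p}_i}$. It then solves the vertex and edge equations with their separate inverses, coupled only through cutoff derivatives at the scales $s_0,s_1$ (Theorem~\ref{thm:contraction}). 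The result is a $3n$-dimensional obstruction section whose linearization is the identity, and the degree follows from the homotopy in Proposition~\ref{prop: homotopy}.
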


We make two remarks on the conditions of Theorem~\ref{thm: main}.

\begin{remark}
The flatness assumption on $g$ near the vertex is a simplification adopted here as a proof of concept. It will be removed in the next paper of this series.
\end{remark}

\begin{remark}
To strengthen the conclusion from existence of at least one disc to a precise count, one needs the additional assumption that $\M(X, J; \tup{\Lambda}^\epsilon, \tup{\cP{p}}^\epsilon)$ is cut out transversely for all sufficiently small $\epsilon > 0$, which holds for generic $(\tup{f}, g)$. Under this assumption, the gluing construction can be shown to be both injective and surjective, yielding a cobordism between $\M(X, J; \tup{\Lambda}^\epsilon, \tup{\cP{p}}^\epsilon)$ and $\M(M, g; \tup{f}, \tup{\cP{p}})$. In the rigid case this implies that the algebraic count of $J$-holomorphic discs equals one.

This is a main feature of our approach: although it only yields an algebraic count, it requires a less sophisticated pre-gluing and Banach manifolds construction. 
\end{remark}

The subsequent paper in this series will generalize the above results to all Morse trees and to moduli spaces of arbitrary dimension.

\subsection{Outline of the Proof}

The main part of the proof of Theorem~\ref{thm: main} is a gluing construction that produces $J$-holomorphic discs from Morse flow Y-trees. Given a Morse flow Y-tree $\tup{\chi} = (\chi_1, \chi_2, \chi_3)$, the construction proceeds in two steps.

\subsubsection*{Pre-gluing}

For each $i$, we construct an \emph{edge model} $w_i \colon [0,\infty) \times [0,1] \to X$ by pushing the gradient flow line $\chi_i$ in the fiber direction via the Hamiltonian vector fields of $\epsilon f_i$ for time $t$ and of $\epsilon f_{i-1}$ for time $(1-t)$. These serve as approximate solutions to the $J$-holomorphic curve equation near each edge.

To glue the three edge models into a disc, we introduce a \emph{local model}: a $J_0$-holomorphic map $\tdlm \colon \dot\Sigma \to \C^n$ with linear Lagrangian boundary conditions and appropriate asymptotic conditions at the punctures. A large ball in $\C^n$ is identified with a neighborhood of $(m_0, 0) \in X$ of size $O(\epsilon)$.

Two families of perturbation parameters are also built into the approximate solution from the outset. The first family $\tdlv \in \parameterSpace_V = \oplus_i \parameterSpace_V^i$, where $\parameterSpace_V^i$ consists of sections proportional to $\alpha_E^i v$ for $v \in T_{m_0}\mathscr{D}_{\cP{p}_i}$ (with $\alpha_E^i$ a bump function near the $i$-th end), has dimension $2n$ and is designed to fill the cokernel of the linearized operator $D_\tdlm$ at the local model. The second family $\constantVector \in \parameterSpace_E = \oplus_i \parameterSpace_E^i$, where $\parameterSpace_E^i$ consists of sections proportional to $\alpha_V^i v$ for $v \in T_{m_0}^\perp\mathscr{D}_{\cP{p}_i}$ (with $\alpha_V^i$ supported near the edge cutoff at $s \sim s_0$), has dimension $n$ and fills the cokernel of the linearized operators $D_{w_i}$ at the edge models. The \emph{pre-glued map} $\app$ is obtained by gluing the local model, the edge models, and these perturbation parameters together via cutoff functions at two scales $s_0$ and $s_1$.

\subsubsection*{Solving the nonlinear system}

The pre-glued map $\app$ is not an exact solution. The residual $\overline{\partial}_J\app$ decouples into a vertex part $\eta$ and edge parts $\eta_i$, and we seek infinite-dimensional corrections $(\xi, \xi_1, \xi_2, \xi_3)$ solving the system
\begin{align*}
\overline{\p}\xi = (\Pi_V - 1)\eta, \qquad D_i\xi_i = (\Pi_{E,i} - 1)\eta_i,
\end{align*}
where $\Pi_V$ is the projection onto the vertex cokernel and $\Pi_{E,i}$ is the projection onto the $i$-th edge cokernel. For each fixed $(\tdlv, \constantVector)$, this system is solved by a contraction mapping argument (Theorem~\ref{thm:contraction}), using $\epsilon$-weighted Sobolev norms $W^{1,p}_{(\epsilon)}$ on the edge models to control the degeneration of the domain as $\epsilon \to 0$.

Two analytic issues arise, and they are treated separately. First, the local model is \emph{obstructed}: $D_\tdlm$ has a $2n$-dimensional cokernel (Proposition~\ref{prop:cokernel}), so the standard implicit function theorem does not apply directly. This finite-dimensional obstruction is treated by introducing an obstruction section and applying a degree argument, following the obstruction-bundle perspective of Hutchings--Taubes \cite{hutchings2009gluing}. Second, the adiabatic limit introduces $\epsilon$-dependent edge domains and norms, so the relevant Sobolev constants and nonlinear estimates must be controlled uniformly in $\epsilon$. This is handled separately by the $\epsilon$-weighted Sobolev norms and the estimates used in the contraction mapping argument.

\subsubsection*{The obstruction section and degree argument}

With $(\xi, \xi_1, \xi_2, \xi_3)$ determined as functions of $(\tdlv, \constantVector)$, the remaining obstruction to producing a genuine $J$-holomorphic disc is encoded by the \emph{obstruction section}
\[
\obstructionSection \colon B^{2n}(r') \times B^n(r') \to Y_1 \times Y_2 \times Y_3 \times Z_1 \times Z_2 \times Z_3,
\]
defined by $\obstructionSection(\tdlv, \constantVector) = (\Pi_{V,1}\eta, \Pi_{V,2}\eta, \Pi_{V,3}\eta, \Pi_{E,1}\eta_1, \Pi_{E,2}\eta_2, \Pi_{E,3}\eta_3)$, whose zeroes correspond to actual $J$-holomorphic discs. The linearization $\obstructionSection_0$ at the origin is represented by the identity matrix in natural bases, hence has degree one. We show that $\obstructionSection$ is homotopic to $\obstructionSection_0$ through nonvanishing maps on the boundary $\p(B^{2n}(r') \times B^n(r'))$ (Proposition~\ref{prop: homotopy}), and conclude by the degree argument that $\obstructionSection$ has at least one zero, producing the desired $J$-holomorphic disc.

\subsection{Comparison with Previous Works}

Fukaya--Oh \cite{fukaya1997zeroloop} and Ekholm \cite{ekholm2007morse} set up the pre-gluing carefully enough that the implicit function theorem applies directly and yields exactly one $J$-holomorphic disc. In contrast, our approach yields an algebraic count of one via a degree argument rather than a direct uniqueness statement. It is tempting to include $\constantVector$ and $\tdlv$ in a single contraction mapping to obtain a unique solution outright, but the contraction mapping conditions do not appear to be satisfied for all parameters simultaneously, and doing so would seem to require a substantially more refined pre-gluing of approximate solutions and ambient Banach manifolds.

A further feature of our approach, absent from \cite{fukaya1997zeroloop} and \cite{ekholm2007morse}, is the restriction that the variation vector field of the edge models integrate to zero along the $\{0\} \times [0,1]$ cord. This restriction is needed to make the estimates work, at the cost of making the edge models obstructed. 

Compared with the obstruction bundle gluing in Hutchings--Taubes \cite{hutchings2009gluing}, the two main differences are that we work in the adiabatic limit case, and that our edge models are half-infinite bands approaching to non-critical points, rather than doubly infinite cylinders approaching to critical points. A minor difference is that we use $W^{1,p}$-norms for $p > 2$ in place of the Morrey spaces used in \cite{hutchings2009gluing}; this is more standard, but comes at the cost of losing a canonical projection onto a subspace.

\subsection{Outline of the Paper}

In Section~\ref{section: morse trees}, we review the moduli space of Morse trees and show that for generic $\tup{f}$ and $g$ it is a smooth manifold of the expected dimension; our approach here differs substantially from that of \cite{fukaya1997zeroloop}. In Section~\ref{section: J-holomorphic discs}, we review the moduli space of $J$-holomorphic discs and introduce the $\epsilon$-dependent Sobolev norms. In Section~\ref{section: edge model}, we study the edge model. In Section~\ref{section: local model}, we study the local model at the vertex, deriving explicit formulas for the local model and for the cokernel elements of $D_\tdlm$. In Section~\ref{section: pre-gluing}, we carry out the pre-gluing construction. In Section~\ref{section: gluing}, we carry out the full gluing construction and prove Theorem~\ref{thm: main}.

\subsection{Acknowledgements}
We thank Garrett Alston, Tobias Ekholm, Kenji Fukaya, Yong-geun Oh, Weidong Ruan, and Yuan Yao for helpful discussions. We are especially grateful to Ko Honda for his interest in this project and for his invaluable input, including suggestions on the exponential decay estimate and the choices of obstruction bundle gluing setup. Ke Zhu is grateful for the hospitality of the Department of Mathematics at the University of Minnesota during his visits.

\section{Morse Trees}\label{section: morse trees}

\subsection{Moduli Spaces of Morse Trees}

Throughout this paper, all trees are finite, connected, and have no vertex of valency two.
Vertices of valency $1$ are called \emph{exterior vertices}; all other vertices are called
\emph{interior vertices}. Edges incident to exterior vertices are called \emph{exterior edges};
all other edges are called \emph{interior edges}. We denote by $E$ the set of edges and by $V$
the set of vertices of the tree.

For each vertex $v \in V$ we assign a point $|v|$, and for each edge $e \in E$ we assign an
interval $|e| = [0,1]$. Given a tree $T$, we define the associated topological space
\[
|T| = \Bigl(\coprod_{v \in V} \{|v|\}\Bigr) \coprod \Bigl(\coprod_{e \in E} |e|\Bigr)\Big/{\sim},
\]
where $\sim$ is the equivalence relation identifying, for each edge $e$ from $v$ to $v'$, the
left endpoint of $|e|$ with $|v|$ and the right endpoint with $|v'|$.

A \emph{metric} on a tree assigns a positive real number to each interior edge and $+\infty$ to
each exterior edge.

\begin{definition}
A \emph{ribbon tree} $T$ is a tree equipped with a cyclic ordering of the edges incident to each
interior vertex.
\end{definition}

Given a ribbon tree $T$, there exists an embedding $\iota \colon |T| \to \Sigma$, where $\Sigma$
is the unit disc $|z| \leq 1$ in $\C$, such that $\iota^{-1}(\partial\Sigma)$ is the set of
exterior vertices. Such an embedding is unique up to homotopy.

\begin{definition}
A \emph{rooted ribbon tree} $(T, v_1)$ is a ribbon tree with a distinguished exterior vertex.
\end{definition}

A rooted ribbon tree has its exterior vertices canonically ordered as $(v_1, \dots, v_k)$
according to their counterclockwise order on $\partial\Sigma$ under $\iota$, where $k$ is the
number of exterior vertices.

Let $M$ be a closed $n$-dimensional manifold.

\begin{definition}
A tuple $\tup{f} = (f_1, \dots, f_k)$ of smooth functions $f_i \colon M \to \R$ is called
\emph{Morse} if $f_a - f_b$ is a Morse function for all distinct indices
$a, b \in \{1, \dots, k\}$.
\end{definition}

\begin{remark}
This condition is slightly stronger than that in \cite{fukaya1997zeroloop}, which requires only
that $f_i - f_{i-1}$ is Morse for $i \in \{1, \dots, k\}$. The stronger condition simplifies
certain transversality arguments below.
\end{remark}

We say $\tup{\cP{p}} = (\cP{p}_1, \dots, \cP{p}_k)$ is a \emph{tuple of critical points} of
$\tup{f}$ if $\cP{p}_i \in \crit(f_i - f_{i-1})$.

Fix a rooted ribbon tree $(T, v_1)$ with $k \geq 1$ exterior vertices and a Morse tuple
$\tup{f}$. We assign to each exterior vertex $v_i$ the Morse function $f_i - f_{i-1}$, and to
each boundary component $c$ of $\partial\Sigma \setminus \iota(|T|)$ the function
$f_c := f_i$, where $c$ is the arc bounded by $v_i$ and $v_{i+1}$. For each edge $e$, we
designate the two boundary components $c$ and $c'$ of $\partial\Sigma \setminus \iota(|T|)$
appearing on either side of $e$ under $\iota$.

Let $g$ be a Riemannian metric on $M$, and denote by $\grad_f$ the gradient vector field of a
function $f \colon M \to \R$ with respect to $g$.

\begin{definition}\label{def: morse tree}
A \emph{Morse tree} is a tuple $\tup{\chi} = (T, v_1, \ell, \chi)$, where $(T, v_1)$ is a
rooted ribbon tree, $\ell$ is a metric on $T$, and $\chi \colon |T| \to M$ is a continuous map
such that:
\begin{enumerate}
  \item for each exterior vertex $v_i$, $\chi(|v_i|) = \cP{p}_i$;
  \item for each edge $e \in E$, choosing an orientation of $|e|$ and parametrizing it as
  $[0, \ell(e)]$ with coordinate $s$ — which determines an ordered pair of boundary components
  $(c, c')$ with $c$ the left component and $c'$ the right — the restriction
  $\chi_e := \chi|_{|e|}$ satisfies
  \begin{equation}\label{eqn: chi}
    \frac{\dd}{\dd s} \chi_e = -\grad_{f_c - f_{c'}}(\chi_e).
  \end{equation}
\end{enumerate}
\end{definition}

Note that reversing the orientation of $|e|$ changes the sign on both sides of
Equation~\eqref{eqn: chi}.

Two Morse trees $\tup{\chi}$ and $\tup{\chi}'$ are \emph{equivalent} if there exists an
isomorphism of rooted ribbon trees $T \to T'$ inducing a homeomorphism
$\theta \colon |T| \to |T'|$ with $\chi' = \chi \circ \theta$.

We define $\M(M, g; T, \tup{f}, \tup{\cP{p}})$, the \emph{moduli space of Morse trees of type $T$},
to be the set of all such $\tup{\chi}$ modulo this equivalence relation.

\begin{proposition}[Proposition~12.5 in \cite{fukaya1997zeroloop}]
For generic $\tup{f}$ and $g$, the moduli space $\M(M, g; T, \tup{f}, \tup{\cP{p}})$ is a smooth
manifold of dimension
\[
\sum_{i=1}^k \ind(\cP{p}_i) - (k-1)n + k - 3 + \sum_v (3 - \op{val}(v)),
\]
where $\ind(\cP{p}_i)$ is the Morse index of $\cP{p}_i$ as a critical point of $f_i - f_{i-1}$,
the second sum is over all interior vertices, and $\op{val}(v)$ is the valency of $v$.
\end{proposition}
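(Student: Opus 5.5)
The plan is to realize $\M(M, g; T, \tup{f}, \tup{\cP{p}})$ as the transverse preimage of a diagonal under an evaluation map, and then apply a parametric (Sard--Smale) transversality argument. First fix the combinatorial type $T$ and the metric $\ell$. For each exterior edge $e$, incident to $v_i$, the flow line $\chi_e$ is determined by its value at the interior endpoint, which must lie on the unstable or stable manifold of $\cP{p}_i$ for $f_c - f_{c'}$; call this manifold $\mathscr{D}_{\cP{p}_i}$. For the root edge, $\mathscr{D}_{\cP{p}_1}$ is the unstable manifold; for the other exterior edges it is the stable manifold, of dimension $n-\ind(\cP{p}_i)$, resp.\ $\ind(\cP{p}_1)$. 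The index conventions are arranged so that the final count below comes out right. For each interior edge $e$ of length $\ell(e)$, the flow map $\phi^{e}_{\ell(e)}$ of $-\grad_{f_c-f_{c'}}$ relates the values at its two endpoints.

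With this, a Morse tree is exactly a point
\[
\bigl((m_v)_{v \text{ interior}}, (\ell(e))_{e \text{ interior}}\bigr) \in M^{\#V_{\rm int}} \times (0,\infty)^{\#E_{\rm int}}
\]
such that two conditions hold: $m_v \in \mathscr{D}_{\cP{p}_i}$ for each exterior edge $e$ at $v$, and $m_{v'} = \phi^{e}_{\ell(e)}(m_v)$ for each interior edge. Equivalently, consider the map
\[
\mathrm{ev}\colon \prod_i \mathscr{D}_{\cP{p}_i} \times M^{\#V_{\rm int}} \times (0,\infty)^{\#E_{\rm int}} \to \prod_{\text{exterior }e} M \times \prod_{\text{interior } e} M .
\]
It sends a configuration to its endpoint mismatches: $(x_e, m_v)$ for exterior edges and $(\phi^e_{\ell(e)}(m_v), m_{v'})$ for interior edges. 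The moduli space is then $\mathrm{ev}^{-1}(\Delta)$, with $\Delta$ the product of diagonals. The dimension count is routine: the domain has dimension $\sum_i \dim\mathscr{D}_{\cP{p}_i} + n\#V_{\rm int} + \#E_{\rm int}$, and $\Delta$ has codimension $n\#E$. Using $\#E = \#V_{\rm int}+k-1$, $\#E_{\rm int} = \#V_{\rm int}-1$, and $\sum_v(\op{val}(v)-3) = k-3-\#V_{\rm int}\cdot 0 \ldots$ (the Euler-characteristic identity $\sum_v \op{val}(v) = 2\#E_{\rm int}+k$), this gives the stated formula.

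The main step is transversality of $\mathrm{ev}$ to $\Delta$ for generic $(\tup{f}, g)$. I would form the universal map over the Banach manifold $\mathcal{P}$ of $C^\ell$ pairs $(\tup{f}, g)$ and show that it is a submersion onto the normal bundle of $\Delta$ at every zero. Then Sard--Smale gives a residual set of parameters for which $\mathrm{ev}^{-1}(\Delta)$ is smooth of the expected dimension, and a Taubes-type intersection over $\ell$ passes to $C^\infty$.

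Surjectivity is shown edge by edge, using variations of $f_c - f_{c'}$ supported near interior points of each edge. The stronger Morse condition on $\tup{f}$ (all $f_a - f_b$ Morse) lets distinct edges carry distinct difference functions, which can then be perturbed independently. A perturbation $\delta h$ supported in a small ball around $\chi_e(s_*)$ changes the endpoint $\phi^e_{\ell}(m_v)$ by $\int (\dd\phi)\,\grad\delta h$, and these variations span $T M$. Exterior edges are handled the same way, since perturbing $f_c - f_{c'}$ away from the critical point moves $\mathscr{D}_{\cP{p}_i}$ arbitrarily near $m_v$.

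The main obstacle is this independence. Distinct edges may be assigned the same difference $f_c - f_{c'}$ (for example, edges separating the same two boundary arcs along a path), and their flow lines could overlap. In that case a localized perturbation affects several edges at once. I would first check whether such coincidences are excluded by the ribbon structure. If they are not, I would handle the overlapping case by a separate argument: either reparametrize by a perturbation of $g$ instead, or note that overlapping edges force a lower-dimensional stratum that is generically empty.
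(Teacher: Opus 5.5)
\textbf{Verdict: there is a genuine gap.} Your fibered-product set-up is a legitimate alternative to the paper's route, which uses a Fredholm section over $W^{1,p}$ maps $|T|\to M$ and a Hahn--Banach cokernel element killed by localized perturbations. The gap is exactly the step you flag as the main obstacle, and your reason for thinking it is harmless is not correct.

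\textbf{The independence step.} The parameters are $(\tup{f},g)$, not one function per edge. A bump added to a single $f_a$ near $\chi_e(s_*)$ moves every edge that borders the $a$-th complementary region and passes through the bump. Distinctness of the difference functions $F_e=f_c-f_{c'}$ lets you separate \emph{two} overlapping edges. For several edges through a common point, however, the achievable variations $(\grad\delta f_c-\grad\delta f_{c'})_e$ are constrained as soon as those edges contain a cycle of the dual graph (regions as vertices, edges of $T$ as edges). For example, around an interior vertex the outward differences telescope to zero. What the edge-by-edge argument really needs is that some interior point of each nonconstant edge lies on no other edge, i.e.\ that every Morse tree is somewhere injective. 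You give no mechanism for this, and your fallbacks do not provide one. Along an open overlap $\dd F_e\parallel\dd F_{e'}$, so a perturbation of $g$ changes $\grad F_e$ and $\grad F_{e'}$ by proportional vectors. And ``overlaps form a generically empty stratum'' is precisely the statement to be proved. The coincidence you single out actually cannot occur: the edges bordering two given regions form a path in $T$, and two consecutive such edges would make their common vertex bivalent. Excluding it, though, does not address overlapping images.

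\textbf{The missing idea.} This is the paper's argument. For a regular tuple (Definition~\ref{def: regular}, via Proposition~\ref{prop: one dimensional embedding}), each locus $\{\dd(f_a-f_b)\wedge\dd(f_c-f_d)=0\}$ is an embedded $1$-manifold. For generic $g$, each $\grad_{f_a-f_b}$ is tangent to it at only finitely many points, while an open overlap of two edges would force tangency along an arc. This gives Proposition~\ref{prop: regular implies injectivity}. This step, not distinctness of the $F_e$, is where the Morse condition on all $f_a-f_b$ (part of regularity) enters. Only after that do localized perturbations, vanishing near the critical sets, give surjectivity.

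\textbf{Constant edges.} You leave these out. For an edge sitting at a critical point, e.g.\ an exterior edge with $m_v=\cP{p}_i$, perturbations away from the critical point do not move $\mathscr{D}_{\cP{p}_i}$ at $m_v$. The paper handles this with the balancing condition \eqref{eqn: balancing condition} together with condition (2a) of Definition~\ref{def: regular}, which forbids two adjacent constant edges. In your language: vary $m_v$ and compensate on the adjacent nonconstant edges.

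\textbf{Dimension count.} With your stated dimensions ($\ind(\cP{p}_1)$ at the root, $n-\ind(\cP{p}_i)$ elsewhere), the count gives $\ind(\cP{p}_1)-\sum_{i\ge2}\ind(\cP{p}_i)+\#V_{\rm int}-1$, which is not the stated formula. In the paper's definition the root plays no role in the flow equations, so every leaf manifold has dimension $\ind(\cP{p}_i)$, as in the Y-tree case. With that and $\sum_v(3-\op{val}(v))=\#V_{\rm int}-k+2$, your count yields exactly the claimed dimension.
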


We will give a proof of this result that generalizes to the graph case.

Define
\[
\M(M, g; \tup{f}, \tup{\cP{p}}) = \coprod_T \M(M, g; T, \tup{f}, \tup{\cP{p}}),
\]
where the disjoint union is taken over all equivalence classes of rooted ribbon trees with $k$
exterior vertices. This is a topological space, with a topology induced by allowing the lengths
of interior edges to tend to zero. In fact, one has the following theorem.

\begin{theorem}[Theorem~1.4 in \cite{fukaya1997zeroloop}]\label{thm: transversality of Morse trees}
For generic $\tup{f}$ and $g$, the moduli space $\M(M, g; \tup{f}, \tup{\cP{p}})$ is a smooth
manifold of dimension
\[
\sum_{i=1}^k \ind(\cP{p}_i) - (k-1)n + k - 3,
\]
where $\ind(\cP{p}_i)$ is the Morse index of $\cP{p}_i$ as a critical point of $f_i - f_{i-1}$.
\end{theorem}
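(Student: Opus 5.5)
Theorem~\ref{thm: transversality of Morse trees} follows from the stratum-wise statement (the preceding Proposition) together with a gluing analysis at the boundary strata. First I reduce to a fixed tree type $T$ and realize $\M(M, g; T, \tup{f}, \tup{\cP{p}})$ as a transverse intersection in a finite-dimensional manifold. For each exterior edge ending at $v_i$, the half-infinite flow line is determined by its starting point, which must lie in the stable (or unstable, depending on orientation) manifold of $\cP{p}_i$ for $f_i - f_{i-1}$. Generically this is a submanifold of dimension $\ind(\cP{p}_i)$ or $n - \ind(\cP{p}_i)$. The root edge has the opposite convention. For each interior edge $e$ of length $\ell(e) \in (0,\infty)$, the flow of $-\grad_{f_c - f_{c'}}$ for time $\ell(e)$ gives a map $M \times (0,\infty) \to M \times M$.

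Assign a point of $M$ to each interior vertex. Then the moduli space is the preimage of a product of diagonals under an evaluation map
\[
\op{ev} \colon \prod_{v} M \times \prod_{e \text{ interior}} (0,\infty) \to \prod (M \times M),
\]
restricted to the product of stable and unstable manifolds at the exterior edges. Counting dimensions gives $\sum_i \ind(\cP{p}_i) - (k-1)n + k - 3 + \sum_v (3 - \op{val}(v))$, since $k - 3 + \sum_v(3-\op{val}(v))$ equals the number of interior edges.

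Transversality is obtained by the parametric transversality (Sard--Smale) theorem, varying $\tup{f}$ and $g$. The point is to show that the universal evaluation map, over the space of pairs $(\tup{f}, g)$, is a submersion onto the relevant product of diagonals. Perturbing $f_c - f_{c'}$ near a noncritical point of an edge moves the endpoint of that edge's flow line in an arbitrary direction. Here the stronger Morse condition, that \emph{all} differences $f_a - f_b$ are Morse, is used. Different edges carry different differences $f_c - f_{c'}$, but these differences are not independent functions. So I would instead perturb the metric $g$ locally along each edge, on a small ball met only by that edge, and for interior edges away from the vertices; this achieves independence. The one issue is edges whose images overlap, for example two edges tracing the same flow line. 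The stronger Morse condition should exclude this generically, or such configurations can be handled by perturbing where they differ.

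Finally I pass from the strata to the whole space. For a tree $T'$ obtained from $T$ by collapsing an interior edge, the strata of $T$ should form a collar of the codimension-one stratum $T'$, with $\ell(e) \to 0$ as the collar coordinate. For a vertex of valency at least $4$, all resolutions $T$ of $T'$ must glue together into a neighborhood of $\M(T')$ of full dimension. This gluing step is the main obstacle. The plan is to define a chart near a higher-valency vertex by parametrizing its resolutions by a fan of edge lengths: the space of metric trees near a vertex of valency $m$ is homeomorphic to $\R^{m-3}$, the associahedron cone. I would check that the evaluation map extends smoothly, in suitable coordinates, across $\ell(e) = 0$, using smooth dependence of the flow on time, including $\ell = 0$. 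Transversality on this extended domain then gives the manifold structure of the expected dimension $\sum_i \ind(\cP{p}_i) - (k-1)n + k - 3$. Smoothness across the walls of the fan, rather than mere topological gluing, is the delicate point.
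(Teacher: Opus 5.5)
You have a genuine gap in the transversality step, at exactly the point you defer. The finite-dimensional reduction itself is fine: vertex points, interior edge lengths, flow maps, and exterior vertices constrained to the descending manifolds give a legitimate alternative to the paper's Fredholm section $\mathfrak{L}$ on $W^{1,p}$-maps $|T|\to M$, and your dimension count is right.

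Switching from perturbing $\tup{f}$ to perturbing $g$ does not make the edges independent. On an arc shared by two edges, $\grad_{f_a-f_b}$ and $\grad_{f_c-f_d}$ are parallel, so a metric variation there changes both gradients proportionally and only detects a combination of the two edges' contributions. Independence comes only from localizing at a point met by a single edge. Showing that every nonconstant edge has such a point is the real content of the step.

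The stronger Morse condition does not provide such points: nondegeneracy of critical points says nothing about two flow lines sharing an arc. ``Perturbing where they differ'' is empty when an edge is covered by the others.

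The paper supplies this missing lemma as follows.
\begin{enumerate}
\item It imposes the regularity condition of Definition~\ref{def: regular}: critical sets of distinct differences are disjoint, and $s\,\dd(f_a-f_b)+\dd(f_c-f_d)$ (and its symmetric counterpart) is transverse to zero.
\item By Proposition~\ref{prop: one dimensional embedding}, this makes $\{\dd(f_a-f_b)\wedge\dd(f_c-f_d)=0\}$ an embedded curve.
\item For generic $g$ the gradients are tangent to that curve at only finitely many points, so every Morse tree is somewhere injective (Proposition~\ref{prop: regular implies injectivity}).
\item Only then does it perturb $\tup{f}$ by $\tup{h}$ supported near such a point, killing a cokernel element $\eta$ edge by edge.
\end{enumerate}

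You also do not treat constant edges, i.e.\ edges sitting at a critical point of their difference (for example, the vertex at some $\cP{p}_i$). There is no noncritical point to perturb near on such an edge. The paper handles them by noting that regularity forbids two adjacent constant edges, and then using the balancing condition~\eqref{eqn: balancing condition} to move a nonzero $\eta$ onto an adjacent nonconstant edge.

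On the passage from strata to the union: the paper's own argument ends once $\mathfrak{L}$ is transverse on each stratum. For $k=3$, the case used later, the Y-tree is the only type anyway. So your fan-chart step goes beyond the paper.

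As you acknowledge, that step is not yet an argument. The equations of one resolution, extended smoothly past $\ell(e)=0$, are not the equations of the adjacent resolution. Extending the evaluation map cone by cone therefore at best gives a topological manifold assembled from pieces with corners. A smooth structure across the walls would still have to be constructed.

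Finally, with the paper's convention $\cP{p}_i\in\crit(f_i-f_{i-1})$ (indices mod $k$), all exterior edges are treated alike. Giving the root edge ``the opposite convention'' would replace $\ind(\cP{p}_1)$ by $n-\ind(\cP{p}_1)$ in your count.
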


\subsection{Fredholm Setup for Morse Trees}\label{section: fredholm set up for trees}

Let $(T, v_1)$ be a rooted ribbon tree, and fix an orientation $\mathfrak{o}_e$ for each edge.
Let $\tup{f}$ be a Morse tuple and $\tup{\cP{p}}$ a tuple of critical points. Let $\mathcal{C}$
be the space of continuous maps $\chi \colon |T| \to M$ such that:
\begin{enumerate}
  \item for each exterior vertex $v_i$, $\chi(|v_i|) = \cP{p}_i$, for $i=1, \dots, k$;
  \item for each edge $e$, the restriction $\chi_e \colon |e| \to M$ is smooth.
\end{enumerate}
Fix $p > 2$. Let $\mathcal{B}$ be the completion of $\mathcal{C}$ under the $W^{1,p}$-norm,
defined via an embedding $M \hookrightarrow \R^N$ for large $N$ and a reference metric $\ell_0$
on $T$. The space $\mathcal{B}$ is independent of both choices.

Define a bundle $\mathcal{E} \to \mathcal{B}$ by
$\mathcal{E}_{(\ell, \chi)} = L^p(\chi^* TM)$, where the $L^p$-norm is defined using $g$ and
the metric $\ell$ on $T$. Define a section $\mathfrak{L}$ of $\mathcal{E} \to \mathcal{B}$ by
\[
\mathfrak{L}(\ell, \chi) = \frac{\dd}{\dd s}\chi + \grad_{\tup{f}}(\chi),
\]
where $s$ is the coordinate on $|T|$ determined by $\mathfrak{o}$ and $\ell$, and
$\grad_{\tup{f}}(\chi)$ is the vector field along $\chi$ defined edgewise by
$\grad_{f_c - f_{c'}}(\chi_e)$ as in Definition~\ref{def: morse tree}. Note that
$\mathfrak{L}$ depends on the choice of orientations.

Denote by $D_{(\ell, \chi)} \colon T_{(\ell, \chi)}\mathcal{B} \to \mathcal{E}_\chi$ the
linearization of $\mathfrak{L}$ at $(\ell, \chi)$ with respect to the Levi-Civita connection
of $g$:
\[
D_{(\ell, \chi)} \colon \R^{\sharp V(T) -k -1} \oplus W^{1,p}(|T|, \chi^* TM) \to L^p(|T|, \chi^* TM),
\]
where ${\sharp V(T) -k -1}$ is the number of interior edges of $T$.

\begin{proposition}[\cite{fukaya1997zeroloop}Section 6]
$D_\chi$ is a Fredholm operator.
\end{proposition}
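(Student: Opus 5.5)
The plan is to split the operator edge by edge and reduce Fredholmness to two standard facts: operators of the form $\frac{\dd}{\dd s} + A(s)$ on a half-line are Fredholm when $A$ is asymptotically hyperbolic, and finite-dimensional perturbations and finitely many matching conditions preserve the Fredholm property. Write
\[
D_{(\ell,\chi)} = D^0_\chi + K,
\]
where $K$ is the part coming from differentiating in the finitely many edge-length parameters $\R^{\sharp V(T)-k-1}$. Since $K$ has finite-dimensional domain, it suffices to show that $D^0_\chi \colon W^{1,p}(|T|,\chi^*TM) \to L^p(|T|,\chi^*TM)$ is Fredholm.

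On each edge $e$, parametrize by $s$ and trivialize $\chi_e^*TM$ by parallel transport along $\chi_e$. In this frame the linearization becomes
\[
\xi \mapsto \xi' + A_e(s)\xi, \qquad A_e(s) = \nabla \grad_{f_c - f_{c'}}(\chi_e(s)).
\]
The interior edges are handled first. An interior edge is a compact interval $[0,\ell(e)]$ with a continuous coefficient $A_e$, so the operator $W^{1,p}([0,\ell(e)],\R^n) \to L^p([0,\ell(e)],\R^n)$ is surjective. Indeed, by variation of constants an ODE can be solved for any right-hand side, and the kernel is $n$-dimensional. This operator is therefore Fredholm of index $n$.

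The exterior edges are the main step. On an exterior edge we have $s \in [0,\infty)$ and $\chi_e(s) \to \cP{p}_i$, so $A_e(s) \to A_\infty = \op{Hess}(f_i - f_{i-1})(\cP{p}_i)$, which is symmetric and nondegenerate because $\tup{f}$ is Morse. I expect the key estimate to be the standard half-line one:
\[
\|\xi\|_{W^{1,p}} \le C\bigl(\|\xi' + A_e\xi\|_{L^p} + \|\xi\|_{L^p([0,R])}\bigr).
\]
To prove it, I would first treat the constant-coefficient operator $\xi' + A_\infty\xi$ on $\R$. It is invertible $W^{1,p}\to L^p$, since the convolution kernel built from $e^{-A_\infty s}$ projected onto the stable and unstable spectral subspaces is integrable. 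Then I would cut off, use $\|A_e - A_\infty\|_{C^0([R,\infty))}$ small to absorb the error term, and invoke compactness of $W^{1,p}([0,R]) \hookrightarrow L^p([0,R])$.

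This estimate gives finite-dimensional kernel and closed range. Solving the ODE explicitly shows the kernel consists of solutions decaying at $+\infty$, so it is isomorphic to the stable subspace of $A_\infty$, of dimension $n - \ind(\cP{p}_i)$. Surjectivity follows by solving with the variation-of-constants formula, integrating the stable part from $0$ and the unstable part from $\infty$. Hence each exterior-edge operator is Fredholm with index $n - \ind(\cP{p}_i)$, up to the orientation convention.

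Finally, $W^{1,p}(|T|,\chi^*TM)$ is the closed subspace of $\bigoplus_e W^{1,p}(|e|)$ cut out by continuity at the interior vertices and by vanishing at the exterior vertices. Since $W^{1,p} \subset C^0$ in dimension one, these are finitely many continuous linear conditions. The restriction of a Fredholm operator to a finite-codimension closed subspace is Fredholm. Combined with the finite-rank perturbation $K$, this shows $D_\chi$ is Fredholm. Summing the indices also recovers the dimension count in Proposition~12.5, which serves as a consistency check.

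The only real obstacle is the uniform exterior-edge estimate. It hinges on the exponential convergence, or at least uniform convergence, $A_e(s) \to A_\infty$, and on the hyperbolicity of $A_\infty$; both are supplied by the Morse condition.
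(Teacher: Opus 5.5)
Your argument is correct, and it supplies something the paper does not: the paper gives no proof of this proposition and simply defers to Section~6 of Fukaya--Oh. Your route is as follows. You split off the finite-rank edge-length part. You handle compact interior edges by ODE solvability, with index $n$. You handle exterior edges as half-line operators $\xi'+A_e(s)\xi$ with asymptotically hyperbolic coefficient. Finally, you impose the vertex matching conditions as finitely many continuous functionals via $W^{1,p}\subset C^0$. This is self-contained and produces the index as a by-product.

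It also fits this paper better than the bare citation does. The vertex conditions enter only through a finite-codimension restriction, so the same argument works verbatim for graphs with cycles, which the authors say they want. It also provides the closed-range property that the Hahn--Banach/integration-by-parts description of the cokernel immediately afterwards tacitly relies on. The citation buys only brevity.

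Three small points need tightening.

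\begin{enumerate}
\item The exterior vertices sit at $s=\infty$, so ``vanishing at the exterior vertices'' imposes nothing beyond membership in $W^{1,p}([0,\infty))$. The only constraints are the $n(\op{val}(v)-1)$ matching conditions at each interior vertex.

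\item For $s$-dependent $A_e$, ``integrating the stable part from $0$ and the unstable part from $\infty$'' presupposes an exponential dichotomy for the nonautonomous equation. Either cite its existence, or get surjectivity from what you already have:
\begin{itemize}
\item extend by zero and apply the full-line inverse of $\partial_s+A_\infty$ to get a right inverse on $[R,\infty)$;
\item perturb by $A_e-A_\infty$ via a Neumann series;
\item solve the ODE backwards on $[0,R]$.
\end{itemize}
With this route your a priori estimate is not even needed, since ODE uniqueness already bounds the kernel dimension by $n$.

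\item Your consistency check only works if each exterior edge contributes $\ind(\cP{p}_i)$ rather than $n-\ind(\cP{p}_i)$. For the Y-tree the index is $c_1+c_2+c_3-2n$. This matches the stated $\sum_i\ind(\cP{p}_i)-2n$ only when decaying solutions are tangent to $\mathscr{D}_{\cP{p}_i}$, which is the convention used in the vertex model.
\end{enumerate}

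None of this affects the Fredholm conclusion.
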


Writing $D_{(\ell, \chi)} = D_\ell + D_\chi$, the operator $D_\chi$ is given by
\[
D_\chi \xi = \nabla_s \xi(\chi) + \nabla_\xi \grad_{\tup{f}}(\chi).
\]

Let $\chi \in \mathfrak{L}^{-1}(0)$, and choose $q$ with $1/p + 1/q = 1$. Suppose the image of
$D_\chi$ is a proper subspace of $L^p(|T|, \chi^* TM)$. Then by the Hahn--Banach theorem there
exists $0 \neq \eta \in L^q(|T|, \chi^* TM)$ perpendicular to the image of $D_\chi$:
\[
\langle D_\chi \xi, \eta \rangle = 0 \quad \text{for all } \xi \in W^{1,p}(|T|, \chi^* TM).
\]
Integrating by parts gives
\begin{align*}
0 &= \sum_e \int_0^{\ell(e)} \left( \frac{\dd}{\dd s}\langle \xi_e, \eta_e \rangle_g\
   - \langle \xi_e, \nabla_s \eta_e \rangle_g
   + \langle \nabla_{\xi_e} \grad_{f_c - f_{c'}}, \eta_e \rangle_g\,\right) \dd s \\
  &= \sum_e \Bigl[\langle \xi_e, \eta_e \rangle_g\big|_0^{\ell(e)}\Bigr]
   + \sum_e \int_0^{\ell(e)} \langle \xi_e,
   -\nabla_s \eta_e + \nabla_{\eta_e} \grad_{f_c - f_{c'}} \rangle_g\,\dd s \\
  &= \sum_v \Bigl\langle \xi(v),\, \sum_{e \in E_v} \mathfrak{o}_e(v) \cdot \eta_e(v) \Bigr\rangle
   + \sum_e \int_0^{\ell(e)} \langle \xi_e,
   -\nabla_s \eta_e + \nabla_{\eta_e} \grad_{f_c - f_{c'}} \rangle_g\,\dd s,
\end{align*}
where $E_v$ is the set of edges incident to $v$ and $\mathfrak{o}_e(v) \in \{\pm 1\}$ is
determined by the boundary orientation of $|e|$. Taking $\xi$ to be compactly supported in the
interior of each edge yields
\begin{equation}\label{eqn: adjoint equation for each edge}
-\nabla_s \eta_e + \nabla_{\eta_e} \grad_{f_c - f_{c'}} = 0
\end{equation}
for each edge $e$. In particular $\eta_e$ is smooth, and the vertex terms then give
\begin{equation}\label{eqn: balancing condition}
\sum_{e \in E_v} \mathfrak{o}_e(v) \cdot \eta_e(v) = 0.
\end{equation}

The formal adjoint $D_\chi^* \colon L^q(|T|, \chi^* TM) \to W^{-1,q}(|T|, \chi^* TM)$ is
\[
D_\chi^* \eta = -\nabla_s \eta(\chi) + \nabla_\eta \grad_{\tup{f}}(\chi).
\]

\subsection{Transversality}

\begin{definition}
A Morse tree $\tup{\chi}$ is \emph{somewhere injective} if, for each edge $e$ along which
$\chi_e$ is nonconstant, there exists an open subset $U \subset |e|$ such that
\begin{equation}\label{eqn: somewhere injective}
\chi(U) \cap \chi(|T| \setminus U) = \emptyset.
\end{equation}
\end{definition}

\begin{remark}
For the purposes of proving transversality, it suffices to require
Equation~\eqref{eqn: somewhere injective} only for exterior edges, but we adopt the above
definition for simplicity. Somewhere injective condition is well-known in the contexts of $J$-holomorphic curves to establish transversality, but our definition in the Morse tree case and its application for transversality appear to be new. See also \cite{cohen2012morse} and \cite{fukaya1996morse} for transversality of the moduli spaces of Morse graph flows and their topological applications.   
\end{remark}

\begin{proposition}\label{prop: one dimensional embedding}
For a generic pair of Morse functions $f, h \colon M \to \R$ with
$\crit(f) \cap \crit(h) = \emptyset$, the set $\{m \in M \mid \dd f \wedge \dd h(m) = 0\}$ is
a smoothly embedded $1$-dimensional submanifold of $M$.
\end{proposition}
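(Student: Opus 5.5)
The set in question is $Z=\{m \mid \dd f\wedge \dd h(m)=0\}$, the points where $\dd f$ and $\dd h$ are linearly dependent. The plan is to realize $Z$ as the transverse preimage of a stratified subset under a jet-type map, and to use the parametric transversality theorem (Sard--Smale) over a finite-dimensional family of perturbations of $(f,h)$. Genericity is then inherited in the $C^\infty$ topology.

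First, near a point $m$ where $\dd h(m)\neq 0$, I would choose local coordinates $(x_1,\dots,x_n)$ with $h=x_1$. In these coordinates the condition $\dd f\wedge\dd h=0$ is equivalent to $\partial_{x_2}f=\dots=\partial_{x_n}f=0$. This is $n-1$ equations, so for transverse data the local solution set is a $1$-manifold. Near a critical point of $h$ the same argument applies with the roles of $f$ and $h$ swapped, since $\crit(f)\cap\crit(h)=\emptyset$ guarantees that $\dd f\neq 0$ there. Globally, I would define the map
\[
\Phi\colon M\to \op{Hom}(TM\oplus TM,\R)=T^*M\oplus T^*M,\qquad m\mapsto (\dd f(m),\dd h(m)).
\]
The set $Z$ is then $\Phi^{-1}(\mathcal{R}_{\le1})$, where $\mathcal{R}_{\le1}$ denotes the pairs of covectors of rank at most one. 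Away from the zero section, the rank-exactly-one locus $\mathcal{R}_1$ is a smooth subbundle of fiber codimension $2n-(n+1)=n-1$. The rank-zero locus $\{(0,0)\}$ has codimension $2n>n$. The disjointness of critical sets means $\Phi$ misses the rank-zero locus, so the zero pair never arises and only $\mathcal{R}_1$ matters.

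Next comes the transversality step. I would consider perturbations $f+\sum_j a_j\phi_j$ with a finite family of functions $\phi_j$ whose differentials span $T^*_mM$ at every $m$, for instance coordinate functions of an embedding $M\hookrightarrow\R^N$. I would also include quadratic terms so that the $1$-jets span $T^*_mM$ at every point and the parametrized map is a submersion onto the fibers. The parametrized map $(m,a)\mapsto \Phi_a(m)$ is then a submersion onto the fibers of $T^*M$ in the first factor, hence transverse to $\mathcal{R}_1$. By Sard--Smale, for generic $a$ the map $\Phi_a$ is transverse to $\mathcal{R}_1$. Consequently $Z=\Phi_a^{-1}(\mathcal{R}_1)$ is an embedded submanifold of dimension $n-(n-1)=1$, and it is closed because $\mathcal{R}_{\le1}\setminus\mathcal{R}_1$ is never hit. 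Morse-ness of $f$ and $h$ and the disjointness of $\crit(f)$ and $\crit(h)$ are open conditions, so they persist under small perturbations and can be intersected with the residual set.

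The main point to check is that $\mathcal{R}_1$ is a smooth submanifold of the correct codimension and that the perturbation family gives fiberwise surjectivity onto $T_{(\alpha,\beta)}(T^*_mM\oplus T^*_mM)$ modulo $T\mathcal{R}_1$. Perturbing only $f$ moves $\alpha$ freely in $T^*_mM$, and this already covers the normal directions to $\mathcal{R}_1$ at a point with $\beta\neq0$. Those normal directions are identified with $T^*_mM/\R\beta$. At points with $\beta=0$ we have $\alpha\neq0$, so perturbing $h$ covers the normal directions instead. Verifying this normal-space identification is the only real computation in the argument.
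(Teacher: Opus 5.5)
Your argument is correct, but it takes a different route from the paper's.

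\textbf{The paper's route.} The paper never works with the rank-one locus directly. It adjoins the proportionality constant $s$ as an extra variable. Transversality of the universal section $s\,\dd f+\dd h$ over $C^\infty(M)\times C^\infty(M)\times\R\times M$, together with Sard--Smale, shows that for generic $(f,h)$ the zero set $L^{-1}(0)\subset\R\times M$ of $s\,\dd f+\dd h$ is a smooth curve. The paper then checks by hand that the projection to $M$ is an injective immersion onto $\gamma\setminus\crit(f)$. It is injective because $s$ is determined by $m$ when $\dd f(m)\neq 0$, and immersive because a tangent vector in the $\R$-direction would force $\dd f(m)=0$. Finally it covers $\crit(f)$ with the symmetric section $\dd f+s\,\dd h$. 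In effect this is an incidence-variety resolution of the determinantal condition, written in the two affine charts of $\R P^1$.

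\textbf{Your route and what it buys.} You pull back the rank-one stratum $\mathcal{R}_1$, of codimension $n-1$, under $\Phi=(\dd f,\dd h)$. Embeddedness and closedness then come for free from the preimage theorem, and genericity needs only finite-dimensional Sard. The paper must instead verify the embedding property of the projection separately; its homeomorphism-onto-image step is left implicit, though it follows because $s$ depends continuously on $m$ off $\crit(f)$.

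\textbf{What the paper's route buys.} Its output is literally the transversality condition (b) in Definition~\ref{def: regular}, which is what is used afterwards. Nothing is lost with your approach: at a point where $\dd h=-s_0\,\dd f\neq 0$, both transversality conditions say that $\nabla(s_0\,\dd f+\dd h)$ maps $T_mM$ onto $T^*_mM/\R\,\dd f(m)$.

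\textbf{Smaller remarks.} The quadratic terms in your perturbation family are unnecessary. The linear coordinate functions of an embedding already make $(m,a,b)\mapsto\Phi_{a,b}(m)$ a submersion, with $a$ perturbing $f$ and $b$ perturbing $h$. At points of $\crit(h)$ transversality is automatic, because the base directions contribute the nondegenerate Hessian of $h$. To upgrade density to a residual (indeed open dense) set, you should also note that transversality to $\mathcal{R}_1$ is a $C^2$-open condition. This holds because on compact $M$ the map $\Phi$ stays a uniform distance from the zero pair.
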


\begin{proof}
Let $B = C^\infty(M) \times C^\infty(M) \times \R \times M$, and let $E = p_1^*(T^*M) \to B$
where $p_1 \colon B \to M$ is the projection. The section
$\mathcal{L}(f, h, s, m) = s\,\dd f(m) + \dd h(m)$ of $E$ is transverse to the zero section,
so $\mathcal{L}^{-1}(0)$ is a smooth submanifold of $B$. By Sard's theorem, a generic pair
$(f, h)$ is a regular value of the projection
$p_2 \colon \mathcal{L}^{-1}(0) \to C^\infty(M) \times C^\infty(M)$.

For such a generic pair, the section $L(s, m) = s\,\dd f(m) + \dd h(m)$ of
$F = p_3^*(T^*M) \to \R \times M$ is transverse to the zero section, so
$L^{-1}(0) \subset \R \times M$ is a smooth $1$-dimensional submanifold. Consider the
projection \[p_4 = p_3|_{L^{-1}(0)} \colon L^{-1}(0) \to M.\]

\begin{claim}
$p_4$ is an embedding away from $p_4^{-1}(\crit(f))$.
\end{claim}
\begin{proof}[Proof of claim]
For any point $(s_0, m_0) \in L^{-1}(0)$ with $m_0 \notin \crit(f)$, choose a coordinate
chart $(u^1, \dots, u^n)$ of $M$ around $m_0$. Let
$\alpha(t) = (s(t), u^1(t), \dots, u^n(t))$ be a parametrization of $L^{-1}(0)$ around
$(s_0, m_0)$ with $\alpha'(0) \neq 0$. Then
\[
s(t)\sum_i\frac{\p f}{\p u^i}(u^1(t),\dots, u^n(t))\,\dd u^i
+ \sum_i \frac{\p h}{\p u^i}(u^1(t),\dots, u^n(t))\,\dd u^i = 0.
\]
Differentiating with respect to $t$ at $t = 0$ gives
\[
s'(0)\sum_i\frac{\p f}{\p u^i}(m_0)\,\dd u^i
+ \sum_{i,j}\left(s_0\frac{\p^2 f}{\p u^i \p u^j}(m_0)
+ \frac{\p^2 h}{\p u^i \p u^j}(m_0)\right)\frac{\dd u^j}{\dd t}(0)\,\dd u^i = 0.
\]
If $\dd p_4|_{(s_0,m_0)} = 0$, then $\frac{\dd u^j}{\dd t}(0) = 0$ for $j = 1, \dots, n$.
Since $\alpha'(0) \neq 0$, we have $s'(0) \neq 0$, which forces
$\frac{\p f}{\p u^i}(m_0) = 0$ for all $i$, contradicting $m_0 \notin \crit(f)$. Hence
$p_4$ is an immersion away from $\crit(f)$.

For injectivity, suppose $(s, m), (s', m) \in L^{-1}(0)$. From the equations
$s\,\dd f(m) + \dd h(m) = 0$ and $s'\,\dd f(m) + \dd h(m) = 0$, we get $s = s'$. Hence
$p_4$ is an embedding away from $\crit(f)$.
\end{proof}

The set $\gamma = \{m \in M \mid \dd f \wedge \dd h(m) = 0\}$ satisfies
$\gamma \setminus \crit(f)$ is an embedded submanifold, and by symmetry so is
$\gamma \setminus \crit(h)$. Since $\crit(f) \cap \crit(h) = \emptyset$, $\gamma$ is an
embedded $1$-dimensional submanifold of $M$.
\end{proof}

\begin{definition}\label{def: regular}
A tuple $\tup{f}$ is \emph{regular} if:
\begin{enumerate}
  \item $\tup{f}$ is Morse;
  \item for any indices $a, b, c, d \in \{1, \dots, k\}$ with $a \neq b$, $c \neq d$, and
  $\{a,b\} \neq \{c,d\}$:
  \begin{enumerate}
    \item $\crit(f_a - f_b) \cap \crit(f_c - f_d) = \emptyset$;
    \item the sections $s\,\dd(f_a - f_b) + \dd(f_c - f_d)$ and
    $\dd(f_a - f_b) + s\,\dd(f_c - f_d)$ of $T^*M \to \R \times M$ are both transverse to the
    zero section.
  \end{enumerate}
\end{enumerate}
\end{definition}

\begin{lemma}
A generic tuple $\tup{f}$ is regular.
\end{lemma}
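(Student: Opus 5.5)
The plan is to show that each of the finitely many conditions in Definition~\ref{def: regular} holds on an open dense (in fact residual) subset of $C^\infty(M)^k$, and then to intersect these sets. Since there are only finitely many index quadruples $(a,b,c,d)$, a finite intersection of residual sets is residual. The convenient move is a change of variables: for fixed $a\neq b$, $c\neq d$ with $\{a,b\}\neq\{c,d\}$, the pair $(F,H)=(f_a-f_b,\,f_c-f_d)$ ranges over an open surjective image of the tuple space. More precisely, the linear map $\tup f\mapsto (F,H)$ is surjective onto $C^\infty(M)^2$, because the vectors $e_a-e_b$ and $e_c-e_d$ in $\R^k$ are linearly independent when $\{a,b\}\neq\{c,d\}$. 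Surjective continuous linear maps of Fréchet spaces are open, and preimages of residual sets under them are residual. This reduces everything to statements about a generic pair $(F,H)$.

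Condition (1) is standard: the set of $\tup f$ with $f_a-f_b$ Morse is the preimage of the open dense set of Morse functions under a surjective linear map, so it is open and dense. Intersecting over all pairs $a\neq b$ gives condition (1).

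For (2a), consider the section $(F,H,m)\mapsto(\dd F(m),\dd H(m))$ of $T^*M\oplus T^*M$ over $C^\infty(M)^2\times M$. It is transverse to the zero section, since varying $F$ and $H$ independently realizes any pair of covectors at $m$. The zero set therefore has codimension $2n$, and by Sard--Smale, for generic $(F,H)$ the restricted zero set in $M$ has expected dimension $n-2n<0$, so it is empty. Openness follows from compactness of $M$, since the property is $C^1$-open.

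For (2b), I would repeat the universal-moduli argument from the proof of Proposition~\ref{prop: one dimensional embedding}. The section $\mathcal L(F,H,s,m)=s\,\dd F(m)+\dd H(m)$ over $C^\infty(M)^2\times\R\times M$ is transverse to zero, because the $H$-variation alone already surjects onto $T^*_mM$. By Sard--Smale applied to the projection of $\mathcal L^{-1}(0)$ to $C^\infty(M)^2$, which is Fredholm of index $1$, a generic $(F,H)$ makes $s\,\dd F+\dd H$ transverse. For the other section $\dd F+s\,\dd H$ the roles of $F$ and $H$ swap, and the $F$-variation gives surjectivity. The technical point is that $C^\infty$ is not Banach. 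I would handle it in the usual way by working in $C^\ell$ for large $\ell$ and then using the Taubes trick, or alternatively by restricting $s$ to compact intervals $[-N,N]$ so that the good sets become open dense in $C^\infty$, and then intersecting over $N$.

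The main obstacle is this last step, namely the noncompactness of the $\R$-factor. Transversality over all of $\R$ is only a residual (countable intersection) condition, and openness may fail as $|s|\to\infty$. I expect this to be fine because the statement says "generic", which I read as "residual". As $|s|\to\infty$, the zeros of $s\,\dd F+\dd H$ accumulate near $\crit(F)$, and there (2a) together with $F$ being Morse gives transversality directly via the nondegenerate Hessian of $F$. That observation would even recover openness if it is needed.
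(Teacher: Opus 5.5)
Your proposal is correct and takes the same route as the paper, whose proof simply reruns the universal-transversality and Sard argument from the proof of Proposition~\ref{prop: one dimensional embedding} for each of the finitely many index quadruples and intersects the resulting generic sets. Your reduction to pairs $(F,H)$ (using that $e_a-e_b$ and $e_c-e_d$ are linearly independent), your treatment of conditions (1) and (2a), and your handling of the $C^\infty$-versus-Banach and large-$|s|$ issues fill in details the paper leaves implicit.
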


\begin{proof}
This follows from the proof of Proposition~\ref{prop: one dimensional embedding} by
intersecting finitely many full-measure subsets.
\end{proof}

\begin{proposition}\label{prop: regular implies injectivity}
Suppose $\tup{f}$ is regular and $n \geq 2$. Then for a generic metric $g$, every Morse tree
$\tup{\chi}$ of $(\tup{f}, g)$ is somewhere injective.
\end{proposition}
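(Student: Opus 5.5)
We argue by contradiction on a single nonconstant edge $e$ of $\tup{\chi}$, with flow function $F = f_c - f_{c'}$. Failure of somewhere injectivity means that every open $U \subset |e|$ meets $\chi(|T|\setminus U)$ in image. Since $|T|$ has finitely many edges and each $\chi_{e'}$ is a flow line (hence piecewise an embedded arc, or constant), a Baire-category argument gives an open subinterval $U \subset |e|$ and a second edge $e' \neq e$ such that $\chi_e(U) \subset \chi_{e'}(|e'|)$. Overlaps of $\chi_e$ with itself cannot occur: a nonconstant gradient flow line of a Morse function is injective, because $F$ strictly decreases along it. So on an open arc $A = \chi_e(U)$ the curve is simultaneously an integral curve of $\grad_g F$ and of $\grad_g G$, where $G = f_a - f_b$ is the function attached to $e'$.

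We split into two cases. First suppose $\{a,b\} = \{c,d\}$ as unordered pairs, so that $G = \pm F$. Then $\chi_e$ and $\chi_{e'}$ are reparametrized pieces of the same (possibly reversed) flow line of $F$. Along the tree this forces a relation between the two boundary components separating $e$ and $e'$ in the ribbon structure. Using the cyclic ordering, the boundary labels $c, c'$ of an edge and those of another edge can coincide only in restricted configurations. In those configurations, $\chi_e$ and $\chi_{e'}$ overlapping on an open arc would force both flow lines to continue together up to a vertex or exterior point. At an exterior point the asymptotic critical points would coincide, while at an interior vertex the flow would have to be tangent in a way that yields a nonconstant Morse tree of negative virtual dimension. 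Genericity of $g$ excludes the latter; this is where genericity of the metric enters. (One could also simply appeal to the Morse--Smale transversality available for generic $g$.)

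In the main case $\{a,b\} \neq \{c,d\}$, tangency of the two gradients along $A$ gives $\dd F \wedge \dd G = 0$ on $A$. By regularity (Definition~\ref{def: regular}) and Proposition~\ref{prop: one dimensional embedding}, the set $\gamma = \{\dd F \wedge \dd G = 0\}$ is a fixed embedded $1$-dimensional submanifold, determined by $\tup{f}$ alone and independent of $g$. So $A$ must be an open piece of $\gamma$, which means that $\grad_g F$ is tangent to $\gamma$ along an open subset. Since $n \geq 2$, the condition that $\grad_g F$ be tangent to the curve $\gamma$ at a point is a codimension-$(n-1)$ condition on $g$ at that point. We therefore perturb $g$ in the space of $C^\infty$ metrics and consider the universal set $\{(g,x) : x \in \gamma,\ \grad_g F(x) \in T_x\gamma\}$. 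Showing this set is transversally cut out, and then applying Sard--Smale, gives a residual set of $g$ for which the tangency locus on $\gamma$ has dimension $1-(n-1) \le 0$, hence is discrete. A discrete set cannot contain the open arc $A$, which is the contradiction. Finitely many pairs $(F,G)$ occur, so we intersect the corresponding residual sets.

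The main obstacle is the Baire/reduction step together with the degenerate case $G = \pm F$. The first requires care to show that failure of injectivity really produces an open arc lying in a single other edge, using that each flow image is a countable union of embedded arcs accumulating only at critical points. The second requires carefully using the ribbon structure and the edge labeling to rule out two distinct edges that share a flow function and coincide. For the tangency transversality, I would first try localized perturbations $g + \delta h$ with $h$ supported near a point of $\gamma$ away from critical points, which changes $\grad_g F$ in arbitrary normal directions.
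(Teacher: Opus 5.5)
Your main case is the paper's argument. On an overlap the two gradients are parallel, so the overlap lies in the $g$-independent curve $\boldsymbol\gamma$ supplied by regularity and Proposition~\ref{prop: one dimensional embedding}. For generic $g$ the gradient is tangent to $\boldsymbol\gamma$ only at isolated points; your universal-tangency plan for this is sound if run over $\gamma\setminus\crit(F)$, which suffices since the overlap arc avoids $\crit(F)$.

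Your reduction to an open arc inside a single other edge is something the paper skips, and it is easier than you fear. Each $\chi_{e'}(|e'|)$ is compact, so $\chi_e^{-1}\bigl(\bigcup_{e'\neq e}\chi_{e'}(|e'|)\bigr)$ is closed in $|e|$. If it had empty interior, any interval $U$ in its complement would work, because $\chi_e$ is injective. Otherwise some $\chi_e^{-1}(\chi_{e'}(|e'|))$ contains an interval, since a finite union of closed sets with interior has a member with interior.

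The one real problem is your paragraph on $G=\pm F$: it is not an argument. Genericity of $g$, or Morse--Smale transversality, gives no reason why two edges carrying the same function could not run along a common segment of one flow line. The claim about a tree of negative virtual dimension is also unsupported.

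What you are missing is that this case is empty. The paper uses this tacitly, since its $\boldsymbol\gamma$ only involves pairs with $\{a,b\}\neq\{c,d\}$. The fact is that distinct edges of a ribbon tree border distinct pairs of boundary arcs. The part of the boundary of the region containing an arc $c$ that lies on $\iota(|T|)$ is the simple path $P_c$ joining the endpoints of $c$. Suppose $e\neq e'$ both border the regions of $c$ and $c'$. Then both lie in the path $P_c\cap P_{c'}$, so two consecutive edges $e_1,e_2$ of it, meeting at an interior vertex $w$, border both regions. The region of $c$ then fills the sector at $w$ between $e_1$ and $e_2$ on one side, and the region of $c'$ fills the sector between them on the other side. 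Hence $e_1$ and $e_2$ are cyclically adjacent at $w$ in both directions, forcing $\op{val}(w)=2$, which is excluded.

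Since distinct arcs carry distinct $f_i$, distinct edges always have different unordered index pairs. For the Y-tree this is just the observation that the edges carry $f_1-f_3$, $f_2-f_1$, $f_3-f_2$. With this lemma, your main case covers everything and the proof is complete.
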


\begin{proof}
Since $\tup{f}$ is regular, the set
$\boldsymbol\gamma = \bigcup \{m \in M \mid \dd(f_a - f_b) \wedge \dd(f_c - f_d)(m) = 0\}$
(union over all admissible index quadruples as in Definition~\ref{def: regular}) is a union of
embedded $1$-dimensional submanifolds of $M$. For a generic metric $g$, the gradient
$\grad_{f_a - f_b}$ is tangent to $\boldsymbol\gamma$ at only finitely many points. If $e$ and
$e'$ are distinct edges with $\chi_e$ satisfying
$\frac{\dd}{\dd s}\chi_e = -\grad_{f_a - f_b}(\chi_e)$ and $\chi_{e'}$ satisfying
$\frac{\dd}{\dd s}\chi_{e'} = -\grad_{f_c - f_d}(\chi_{e'})$, then the vector fields
$\grad_{f_a - f_b}$ and $\grad_{f_c - f_d}$ are parallel along $\chi_e$ and $\chi_{e'}$ only
at finitely many points, which gives the required injectivity.
\end{proof}

\begin{proposition}\label{prop: transversality of Morse trees}
Let $\ell \geq 1$ be an integer.
For a $C^{\ell+1}$-generic tuple $\tup{f}$ and a $C^\ell$-generic metric $g$, $\mathfrak{L}$ is transverse to the
zero section at $\tup{\chi}$ for all $\tup{\chi} \in \M(M, g; \tup{f}, \tup{\cP{p}})$ and all
$\tup{\cP{p}}$.
\end{proposition}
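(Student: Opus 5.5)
We would argue by the standard universal-moduli-space approach, adapted to trees with the somewhere-injectivity just established. We fix the combinatorial type $(T,v_1)$, edge orientations and critical points $\tup{\cP{p}}$; since there are finitely many types and tuples, a countable intersection of residual sets suffices. We first take $\tup{f}$ regular (Definition~\ref{def: regular}, generic by the lemma above). For $n\ge 2$ we restrict to metrics for which Proposition~\ref{prop: regular implies injectivity} makes every Morse tree somewhere injective; the case $n=1$ is handled directly, since there all flow lines lie in a circle and transversality reduces to nondegeneracy of critical points. We then let $\mathcal{G}^\ell$ be the Banach manifold of $C^\ell$ metrics and consider the universal section
\[
\widetilde{\mathfrak{L}}(g,\ell,\chi)=\frac{\dd}{\dd s}\chi+\grad^g_{\tup{f}}(\chi)
\]
on $\mathcal{G}^\ell\times\mathcal{B}$. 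Its linearization at a zero is $D_{(\ell,\chi)}\oplus K$, where $K(\delta g)=\dd/\dd t\,\grad^{g+t\delta g}_{\tup f}(\chi)$. Variation of the metric gives $\grad^{g+t\delta g}h=\grad^g h - t\,(\delta g)^{\sharp}(\grad^g h)+O(t^2)$, so $K(\delta g)_e=-A_{\delta g}\grad_{f_c-f_{c'}}(\chi_e)$, where $A_{\delta g}$ is the $g$-self-adjoint endomorphism associated to $\delta g$.

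To show surjectivity of the universal linearization, we suppose $\eta\in L^q$ annihilates its image. Annihilating the image of $D_\chi$ gives, by the computation in Section~\ref{section: fredholm set up for trees}, that $\eta$ is smooth on each edge, satisfies the adjoint ODE \eqref{eqn: adjoint equation for each edge}, and satisfies the balancing condition \eqref{eqn: balancing condition} at interior vertices. Annihilating the image of $K$ gives
\[
\sum_e\int\langle A_{\delta g}\grad_{f_c-f_{c'}}(\chi_e),\eta_e\rangle\,\dd s=0
\]
for all symmetric $\delta g$. We pick a nonconstant edge $e$ and the open set $U\subset|e|$ from somewhere injectivity, and choose $\delta g$ supported in a small neighbourhood of a point $\chi_e(s_*)$, $s_*\in U$, that meets no other part of $\chi(|T|)$. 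Since $\grad_{f_c-f_{c'}}(\chi_e(s_*))\ne0$ (the edge is a nonconstant flow line), for any vector $v:=\eta_e(s_*)$ there is a symmetric $A$ with $\langle A\,\grad,\eta_e\rangle>0$ near $s_*$. For example, take $A=\grad\otimes v^\flat+v\otimes\grad^\flat$, corrected by a multiple of $\grad\otimes\grad^\flat$ if $\langle\grad,v\rangle=0$. Hence $\eta_e$ vanishes on an open set, and by uniqueness for the linear ODE \eqref{eqn: adjoint equation for each edge} it vanishes on all of $e$. Constant edges are those mapped to a critical point, which happens only for exterior edges ending at $\cP{p}_i$ that are entirely constant; on them $\eta_e$ solves a linear ODE whose solutions in $L^q$ on a half-line must decay. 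We propagate vanishing through the tree. At an interior vertex of valency three, balancing means that if all but one incident $\eta_e(v)$ vanish, so does the last, and the ODE then kills that edge. Starting from the leaves, where decay at the critical point combined with vanishing elsewhere forces zero, this propagation kills $\eta$ everywhere. If an edge is constant this must be handled with care, because such an edge forces the vertex to sit at a critical point; this is excluded for Morse trees in non-degenerate types by regularity (a), since $\crit(f_a-f_b)\cap\crit(f_c-f_d)=\emptyset$.

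Granted surjectivity, the universal moduli space is a $C^{\ell}$ Banach manifold, and the projection to $\mathcal{G}^\ell$ is Fredholm with index equal to $\ind D_{(\ell,\chi)}$. For $\ell$ large relative to that index, Sard--Smale gives a residual set of $g$ for which $\mathfrak{L}$ is transverse at every zero. The variation in $\tup f$ (the $C^{\ell+1}$ genericity) is used only to secure regularity and the hypotheses of Proposition~\ref{prop: regular implies injectivity}; one loses one derivative because $\grad$ involves $\dd f$. We expect the main obstacle to be the unique-continuation and propagation step: ensuring that vanishing of $\eta$ on one edge, together with the balancing conditions, forces vanishing on every edge, in particular for vertices of higher valency and for constant edges. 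If direct propagation fails there, we would instead apply the perturbation argument to every nonconstant edge separately, which somewhere injectivity permits for each edge.
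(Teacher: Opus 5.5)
Your argument is correct in outline, but it follows a different route from the paper.

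\textbf{The two routes.} The paper fixes $g$ (chosen via Proposition~\ref{prop: regular implies injectivity}) and builds the universal section over perturbations $\tup{h}$ of the \emph{functions}, required to vanish near the critical points. The extra linearized term is then $\grad_{\tup h}(\chi)$, which can be prescribed arbitrarily at a point $\chi_e(y)$. So pointwise surjectivity is immediate, and the family is affine in the parameter.

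You instead fix a regular $\tup f$ and vary the \emph{metric}, so the extra term is $-A_{\delta g}\grad_{f_c-f_{c'}}(\chi_e)$. This needs the gradient to be nonzero at the chosen point, which holds on nonconstant edges. It also needs the fact that $A\mapsto A\,\grad$ maps $g$-symmetric endomorphisms onto $T_mM$. Your choice $A=\grad\otimes v^\flat+v\otimes\grad^\flat$ already gives $\langle A\grad,v\rangle=\langle v,\grad\rangle^2+|v|^2|\grad|^2>0$ for $v\neq 0$, so no correction term is needed.

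What your route buys:
\begin{enumerate}
\item Metric variations never move critical points, so no cutoff near $\crit$ is needed.
\item Both somewhere injectivity and transversality come from genericity of the same parameter $g$.
\end{enumerate}

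\textbf{Making the genericity step precise.} Do not restrict the base to the residual set of metrics from Proposition~\ref{prop: regular implies injectivity}; a residual set need not be open, so the universal space over it is not a Banach manifold. Instead, restrict the universal moduli space to the open subset of somewhere-injective trees and apply Sard--Smale there. Then intersect the resulting residual set of metrics with the one from Proposition~\ref{prop: regular implies injectivity}. The paper's route needs the same openness, applied to $\tup f+\tup h$ with $g$ fixed, and leaves it implicit.

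\textbf{Constant edges.} Regularity (a) does not exclude constant edges. For example, if $\mathscr{D}_{\cP{p}_i}=\{\cP{p}_i\}$, the vertex of every Y-tree sits at $\cP{p}_i$ and the $i$-th edge is constant. What (a) excludes is two constant edges meeting at a vertex, because distinct edges at a vertex carry distinct pairs $\{c,c'\}$.

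That is exactly what your propagation needs, and it also removes your worry about higher valency. Once $\eta$ vanishes on all nonconstant edges, every other edge at an endpoint of a constant edge is nonconstant. Then \eqref{eqn: balancing condition} forces the constant edge's value there to vanish, and ODE uniqueness kills it. The same works for a constant interior edge, should one occur. This is precisely the paper's final step.
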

\begin{proof}
Let $U$ be a small neighborhood of the union of critical point sets. Let $\mathcal{W}$ be the
space of tuples $\tup{h} = (h_1, \dots, h_k)$ of $C^{\ell+1}$-functions on $M$ vanishing on
$\overline{U}$. Fix a rooted ribbon tree $(T, v_1)$. Let $\mathcal{B}$ be the $W^{1,p}$-space
of maps from $|T|$ to $M$ sending exterior vertices to $\tup{\cP{p}}$, as defined in
Section~\ref{section: fredholm set up for trees}. Let $\mathcal{E}$ be the bundle over
$\mathcal{W} \times \mathcal{B}$ with fiber $\mathcal{E}|_{(\tup{h}, \ell, \chi)} =
L^p(\chi^* TM)$, and define a section $\mathbb{L}$ of $\mathcal{E} \to \mathcal{W} \times
\mathcal{B}$ by
\[
\mathbb{L}(\tup{h}, \ell, \chi) = \frac{\dd}{\dd s}\chi + \grad_{\tup{f} + \tup{h}}(\chi).
\]

We claim $\mathbb{L}$ is transverse to the zero section. Let $\mathbb{D}$ be the linearization
of $\mathbb{L}$ at $(0, \ell, \chi)$, where $s$ is the coordinate on $|T|$ with respect to
$\ell$. For any $(\tup{h}, \dot\ell, \xi) \in T_{(0,\ell,\chi)}(\mathcal{W} \times
\mathcal{B})$,
\[
\mathbb{D}(\tup{h}, \dot\ell, \xi)
= \nabla_s\xi(\chi) + \nabla_\xi\grad_{\tup{f}}(\chi)
+ \frac{\dd}{\dd s}\chi \cdot \dot\ell + \grad_{\tup{h}}(\chi).
\]
Suppose the image of $\mathbb{D}$ is a proper subspace of $L^p(\chi^* TM)$. By the
Hahn--Banach theorem, there exists $0 \neq \eta \in L^q(\chi^*TM)$ such that
$\langle \mathbb{D}(\tup{h}, \dot\ell, \xi), \eta\rangle = 0$ for all $(\tup{h}, \dot\ell,
\xi)$. This forces $\langle D_\chi\xi, \eta\rangle = 0$,
$\langle \frac{\dd}{\dd s}\chi \cdot \dot\ell, \eta\rangle = 0$, and
$\langle \grad_{\tup{h}}(\chi), \eta\rangle = 0$. The first equation implies that $\eta$
satisfies Equations~\eqref{eqn: balancing condition}
and~\eqref{eqn: adjoint equation for each edge}.

If $\eta$ is nonzero on some edge $e$ along which $\chi_e$ is nonconstant, then by
Proposition~\ref{prop: regular implies injectivity} there exists an open subset $V \subset |e|$
with $\chi_e(V) \cap \chi(|T| \setminus |e|) = \emptyset$ and $\chi_e(V) \cap U = \emptyset$.
For any $y \in V$, one can choose $\tup{h}$ supported near $m = \chi_e(y)$ with
$\grad_{\tup{h}}(m) = \eta(\chi_e(y))$, and $\dot\ell= \xi=0$, giving
$\langle \grad_{\tup{h}}(\chi), \eta\rangle > 0$, a contradiction. Hence $\eta$ vanishes on
all nonconstant edges. Since no two adjacent edges can both be constant by
Definition~\ref{def: regular}, solving Equations~\eqref{eqn: balancing condition}
and~\eqref{eqn: adjoint equation for each edge} for any constant edge forces $\eta$ to be
nonzero on some adjacent nonconstant edge, and the same argument yields a contradiction.

Hence $\mathbb{L}$ is transverse to the zero section, so $\mathbb{L}^{-1}(0)$ is a smooth
submanifold of $\mathcal{W} \times \mathcal{B}$. Let
$\op{pr} \colon \mathbb{L}^{-1}(0) \to \mathcal{W}$ be the projection. By Sard's theorem, a
generic $\tup{h} \in \mathcal{W}$ is a regular value of $\op{pr}$, and for such $\tup{h}$ the
section $\mathfrak{L}$ is transverse to the zero section.
\end{proof}

This completes the proof of Theorem~\ref{thm: transversality of Morse trees}.

\section{\texorpdfstring{$J$}{J}-holomorphic Discs}\label{section: J-holomorphic discs}

\subsection{Moduli Spaces of \texorpdfstring{$J$}{J}-holomorphic Discs}

Let $J$ be the canonical almost complex structure on $X = T^*M$ induced by $g$. Concretely,
for any $(m, \alpha) \in T^*M$, the Levi-Civita connection of $g$ gives a splitting
$T_{(m,\alpha)}X = T_mM \oplus T^*_mM$, and $J$ sends $v \oplus 0$ to $0 \oplus \beta$ where
$\beta(\cdot) = \langle v, \cdot\rangle_g$. The almost complex
structure $J$ is compatible with the canonical symplectic form $\omega$ in the sense that
$\omega(\cdot, J\cdot)$ defines a Riemannian metric on $X$.

For $\epsilon > 0$, let $\Lambda_i^\epsilon = \op{graph}(\epsilon\,\dd f_i) \subset T^*M$ and
$\tup{\Lambda}^\epsilon = (\Lambda_1^\epsilon, \dots, \Lambda_k^\epsilon)$. A
\emph{tuple of intersection points}
$\tup{\cP{p}}^\epsilon = (\cP{p}_1^\epsilon, \dots, \cP{p}_k^\epsilon)$ consists of points
$\cP{p}_i^\epsilon \in \Lambda_{i-1}^\epsilon \cap \Lambda_i^\epsilon$.

A \emph{$J$-holomorphic disc} is a pair $(\tup{z}, u)$, where
$\tup{z} = (z_1, \dots, z_k)$ is a tuple of distinct points on $\partial\Sigma$ ordered
counterclockwise and $u \colon \Sigma \to X$ is a continuous map satisfying
\[
u(z_i) = \cP{p}_i^\epsilon, \qquad
u(\p_i \dot{\Sigma}) \subset \Lambda_i^\epsilon, \qquad
\overline{\partial}_J u = 0,
\]
where $\p_i \dot{\Sigma}$ is the boundary arc of $\partial\Sigma \setminus \tup{z}$ between $z_i$ and
$z_{i+1}$, and \[\overline{\partial}_J u := \frac{1}{2}(\dd u + J \circ \dd u \circ j).\] Two
$J$-holomorphic discs $(\tup{z}, u)$ and $(\tup{z}', u')$ are \emph{equivalent} if there
exists a biholomorphism $\phi \colon \Sigma \to \Sigma$ with $\phi(\tup{z}) = \tup{z}'$ and
$u = u' \circ \phi$.

We denote by $\M(X, J; \tup{\Lambda}^\epsilon, \tup{\cP{p}}^\epsilon)$ the moduli space of
equivalence classes of $J$-holomorphic discs.

\begin{theorem}[\cite{fukaya1997zeroloop,ekholm2007morse}]\label{thm: main2}
For a generic choice of $\tup{f}$ and sufficiently small $\epsilon > 0$, the moduli space
$\M(X, J; \tup{\Lambda}^\epsilon, \tup{\cP{p}}^\epsilon)$ is a smooth manifold diffeomorphic to
$\M(M, g; \tup{f}, \tup{\cP{p}})$.
\end{theorem}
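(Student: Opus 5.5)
The statement is the Fukaya--Oh/Ekholm correspondence, quoted as background. A self-contained proof has three parts: compactness, gluing, and a matching argument showing the glued discs account for every disc.

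\emph{Step 1: compactness.} Use the adiabatic (Gromov-type) compactness of \cite{fukaya1997zeroloop,ekholm2007morse}. Take any sequence $u^{\epsilon_\nu}\in \M(X,J;\tup{\Lambda}^{\epsilon_\nu},\tup{\cP{p}}^{\epsilon_\nu})$ with $\epsilon_\nu\to0$.
\begin{enumerate}
\item The energy equals the action difference, which is $O(\epsilon)$.
\item Rescale the fiber by $\epsilon^{-1}$. Away from the vertex regions, the disc then degenerates into thin strips of width $O(\epsilon)$.
\item On these strips, the projection to $M$ satisfies, to leading order, the flow equation $\frac{\dd}{\dd s}\chi=-\grad(f_c-f_{c'})$.
\end{enumerate}
Hence every such sequence subconverges to a Morse tree in $\M(M,g;\tup{f},\tup{\cP{p}})$, where short interior edges are allowed to collapse. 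This gives a well-defined limiting map from discs to trees for small $\epsilon$.

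\emph{Step 2: gluing.} Fix a transversely cut out tree $\tup{\chi}$; this is possible for generic data by Proposition~\ref{prop: transversality of Morse trees}.
\begin{enumerate}
\item Build an approximate solution. Near each edge use the edge models (Hamiltonian pushes of $\chi_e$ in the fiber direction). Near each vertex use local models: holomorphic polygons in $\C^n$ with linear Lagrangian boundary conditions, rescaled to size $O(\epsilon)$. Splice them with cutoffs.
\item Estimate $\|\overline\partial_J\app\|$ in $\epsilon$-weighted norms.
\item Build an approximate right inverse, uniformly bounded in $\epsilon$. The vertex and edge pieces are individually obstructed, but the cokernels match. On the finite-dimensional level, pairing the cokernels reproduces exactly the linearized Morse-tree operator $D_{(\ell,\chi)}$.
\item Surjectivity of $D_{(\ell,\chi)}$ therefore gives a uniformly bounded right inverse of the linearized $\overline\partial$ operator.
\item Apply the Newton/implicit function theorem. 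This yields, for small $\epsilon$, a local diffeomorphism $G_\epsilon$ from a compact piece of $\M(M,g;\tup{f},\tup{\cP{p}})$ into the disc moduli space, with surjective linearizations. This also establishes the smoothness of the disc moduli space near the image of $G_\epsilon$.
\end{enumerate}

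\emph{Step 3: injectivity and surjectivity.}
\begin{enumerate}
\item Injectivity follows because $G_\epsilon(\tup{\chi})$ converges to $\tup{\chi}$, and distinct trees are separated.
\item For surjectivity, take any disc close to $\tup{\chi}$, which exists by Step 1. Show it lies in the image of $G_\epsilon$: write it as $\app$ plus a small correction and use uniqueness in the contraction mapping.
\end{enumerate}
Near the boundary strata where interior edges shrink to zero, handle the higher-valent vertices the same way, using the corresponding local models. This gives a global diffeomorphism.

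The main obstacle is the uniform linear estimate in Step 2, i.e.\ the matching of vertex and edge cokernels, together with the surjectivity argument in Step 3. Surjectivity requires exponential decay estimates for arbitrary discs along the thin strips, which is delicate in the Morse--Bott adiabatic regime. I would first try to derive this decay from the transversality of the Morse tree, as Ekholm does.
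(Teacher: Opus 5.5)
The paper does not prove this statement. It is quoted from Fukaya--Oh and Ekholm, and the paper proves only the weaker Theorem~\ref{thm: main} (existence of at least one disc near a rigid Y-tree, with $g$ flat and the $f_i$ linear near the vertex), by a different method. Instead of building a uniformly bounded right inverse by matching vertex and edge cokernels, as in your Step~2, the paper keeps both pieces obstructed. A positive exponential weight at the vertex gives a $2n$-dimensional cokernel, and the zero-average condition on the cord $\{0\}\times[0,1]$ gives edge cokernels of dimension $n-\ind\cP{p}_i$. The paper then adds the finite-dimensional parameters $(\tdlv,\constantVector)$ and solves the infinite-dimensional part by a contraction for each fixed parameter. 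Finally it shows the resulting obstruction section $\obstructionSection$ is homotopic, through maps nonvanishing on the boundary of a small ball, to one represented by the identity. This avoids any uniform linear estimate; the paper remarks that putting $(\tdlv,\constantVector)$ into a single contraction does not seem to work without a more refined pre-gluing. The price is that it yields only existence, never smoothness of the disc moduli space, injectivity, or surjectivity. Your outline follows the Fukaya--Oh/Ekholm route, which is what the full statement needs. Its real content lies in the steps you defer: the uniform right inverse in the $\epsilon$-weighted norms, and the a priori decay estimate that places an arbitrary nearby disc, suitably reparametrized, inside the uniqueness ball despite the $\epsilon^{-1/p}$ growth of the Sobolev constant.

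There is also a concrete gap when the moduli spaces have positive dimension. Step~1 is false as stated in that case. Then $\M(M,g;\tup{f},\tup{\cP{p}})$ is noncompact, and discs taken near the broken-strip ends of the disc moduli spaces can converge, as $\epsilon\to0$, to \emph{broken} trees (an edge passing through an intermediate critical point). Such trees are not elements of $\M(M,g;\tup{f},\tup{\cP{p}})$.

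Consequently Step~2 gives, for each compact $K\subset\M(M,g;\tup{f},\tup{\cP{p}})$, a threshold $\epsilon_0(K)$ below which $G_\epsilon$ embeds $K$. It does not give a single $\epsilon$ for which every disc lies in the image. Likewise, ``distinct trees are separated'' does not give injectivity on a continuum of trees unless you also have a uniform local-uniqueness statement.

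A global diffeomorphism needs two further ingredients:
\begin{itemize}
\item gluing that is uniform up to the ends and compatible with breaking at critical points, or alternatively an argument on the compactified spaces;
\item uniformity across the regime where an interior edge has length comparable to $\epsilon$. There the vertex model is a polygon whose conformal modulus degenerates, so this is not handled ``the same way'' as the trivalent case.
\end{itemize}
In the rigid case these issues disappear. In particular for $k=3$, only the Y-tree occurs and the tree moduli space is a finite set, so your outline becomes the standard argument, modulo the deferred analysis.
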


In this paper we prove Theorem~\ref{thm: main}, which is a weaker version of  Theorem~\ref{thm: main2} in the case $k = 3$ and
when both moduli spaces are zero-dimensional.

\subsection{Fredholm Setup for \texorpdfstring{$J$}{J}-holomorphic Discs}

Let $(\tup{z}, u) \in \M(X, J; \tup{\Lambda}^\epsilon, \tup{\cP{p}}^\epsilon)$, and let
$\dot\Sigma = \Sigma \setminus \tup{z}$. We define the $\epsilon$-dependent Sobolev norm on
sections of $u^*TX$ by
\begin{equation}\label{eqn: w1pepsilon norm}
\|\xi\|_{1,p,(\epsilon)}
= \left(\int_{\dot\Sigma} \epsilon^2 |\xi|^p + \epsilon^{2-p}|\nabla\xi|^p\right)^{1/p},
\end{equation}
where the integral uses the cylindrical volume form $\dd s \wedge \dd t$ near the ends of
$\dot\Sigma$. Similarly, the $\epsilon$-dependent $L^p$-norm on $(0,1)$-forms is
\begin{equation}\label{eqn: Lpepsilon norm}
\|\eta\|_{p,(\epsilon)}
= \left(\int_{\dot\Sigma} \epsilon^{2-p}|\eta|^p\right)^{1/p}.
\end{equation}
Note that $\|\cdot\|_{1,p,(\epsilon)}$ is equivalent to the standard $W^{1,p}$-norm, and
$\|\cdot\|_{p,(\epsilon)}$ is equivalent to the standard $L^p$-norm if we do not let $\eps\to 0$.  See Appendix for the geometric meanings of the weighted Sobolev norm.

The linearization of $\overline{\partial}_J$ at $(\tup{z}, u)$ in the direction of $u$ is the
operator \[D_u \colon W^{1,p}_{(\epsilon)}(u^*TX) \to L^p(\wedge^{0,1}\dot\Sigma \otimes_\C u^*TX)\]
given by
\begin{align}\label{formula: D}
D_u\xi
&= \tfrac{1}{2}(\nabla\xi + J\nabla\xi \circ j)
 - \tfrac{1}{2}J(\nabla_\xi J)\partial_J u \nonumber\\
&= \tfrac{1}{2}\bigl[(\nabla_s\xi + J\nabla_t\xi)
 - \tfrac{1}{2}J(\nabla_\xi J)(u_s - Ju_t)\bigr]
 \otimes_J (\dd s - \iu\,\dd t),
\end{align}
where $s + \iu t$ is a holomorphic coordinate on $\Sigma$.

\begin{lemma}
The operator $D_u$ is Fredholm.
\end{lemma}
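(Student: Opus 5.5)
The plan is the standard one for Cauchy--Riemann operators on punctured Riemann surfaces with Lagrangian boundary conditions: show that $D_u$ is a compact perturbation of an operator that is Fredholm on each end and elliptic on the compact part, then patch. Since $\epsilon>0$ is fixed here, the norm $\|\cdot\|_{1,p,(\epsilon)}$ is equivalent to the usual $W^{1,p}$-norm and $\|\cdot\|_{p,(\epsilon)}$ to the usual $L^p$-norm. Fredholmness is invariant under equivalent norms, so I will work throughout with the standard $W^{1,p}$ and $L^p$ spaces on $\dot\Sigma$, defined with the cylindrical metric near the punctures.

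\textbf{Step 1: the ends.} Near each puncture $z_i$ I choose strip-like coordinates $(s,t)\in[0,\infty)\times[0,1]$. The map $u$ converges exponentially to the constant $\cP{p}_i^\epsilon$, with the boundary pieces $t=0$ and $t=1$ lying on $\Lambda_{i-1}^\epsilon$ and $\Lambda_i^\epsilon$ respectively. I trivialize $u^*TX$ over the end by parallel transport in a unitary frame chosen so that the linearized boundary conditions become fixed linear Lagrangians $L_0=T_{\cP{p}_i^\epsilon}\Lambda^\epsilon_{i-1}$ and $L_1=T_{\cP{p}_i^\epsilon}\Lambda^\epsilon_i$; to arrange this, I straighten the Lagrangians by a local diffeomorphism near the intersection point. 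In this trivialization
\[
D_u\xi=\partial_s\xi+J_0\partial_t\xi+S(s,t)\xi ,
\]
where $S(s,t)\to0$ exponentially as $s\to\infty$, because $u_s$ and $u_t$ decay exponentially. The key nondegeneracy fact is that $f_i-f_{i-1}$ is Morse, so $\Lambda_{i-1}^\epsilon$ and $\Lambda_i^\epsilon$ intersect transversely at $\cP{p}_i^\epsilon$, i.e.\ $L_0\cap L_1=0$. Consequently the asymptotic operator $A=-J_0\partial_t$ on $\{\xi\in W^{1,2}([0,1],\C^n):\xi(0)\in L_0,\ \xi(1)\in L_1\}$ is self-adjoint with trivial kernel. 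The standard theory (Lockhart--McOwen, Salamon, Robbin--Salamon) then gives that $\partial_s+J_0\partial_t$ on the half-strip, and also on the full strip $\R\times[0,1]$, is an isomorphism $W^{1,p}\to L^p$, with the estimate $\|\xi\|_{1,p}\le C\|(\partial_s+J_0\partial_t)\xi\|_p$. Since $S$ decays, the same estimate holds for $D_u$ on $[R,\infty)\times[0,1]$ once $R$ is large.

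\textbf{Step 2: the semi-Fredholm estimate and patching.} On the compact part $K=\Sigma\setminus\bigcup_i\{s>R\}$, elliptic boundary regularity for the Cauchy--Riemann operator with totally real boundary conditions gives
\[
\|\xi\|_{W^{1,p}(K)}\le C\bigl(\|D_u\xi\|_{L^p(K')}+\|\xi\|_{L^p(K')}\bigr)
\]
for a slightly larger $K'$. Combining this with the end estimate via a cutoff function gives
\[
\|\xi\|_{1,p}\le C\bigl(\|D_u\xi\|_p+\|\xi\|_{L^p(K')}\bigr).
\]
The restriction map $W^{1,p}\to L^p(K')$ is compact by Rellich, so $D_u$ has finite-dimensional kernel and closed range.

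\textbf{Step 3: finite cokernel.} The same argument applies to the formal adjoint $D_u^*$ on $L^q$ with adjoint boundary conditions. An element $\eta\in L^q$ annihilating the range of $D_u$ satisfies $D_u^*\eta=0$ weakly, with the boundary conditions $\eta\in L_0$ and $\eta\in L_1$ at the corresponding edges. By elliptic regularity such an $\eta$ lies in $W^{1,q}$, so the kernel of the adjoint is finite dimensional by the analogous estimate. Alternatively, one can build a parametrix from the end inverses and a local inverse on $K$.

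The only delicate point is Step 1: straightening the boundary conditions in the trivialization, and verifying that transversality of $\Lambda_{i-1}^\epsilon$ and $\Lambda_i^\epsilon$ makes $A$ invertible with no zero eigenvalue. This is exactly where the Morse hypothesis on $\tup{f}$ enters. The index itself is not needed here.
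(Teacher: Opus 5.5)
Your proposal is correct and takes the same route as the paper. The paper likewise notes that for fixed $\epsilon>0$ the weighted norms are equivalent to the unweighted ($\epsilon=1$) ones, and then simply invokes the standard Fredholm theory that you sketch (invertible asymptotic operator from transversality of $\Lambda_{i-1}^\epsilon$ and $\Lambda_i^\epsilon$, a semi-Fredholm estimate via Rellich, and the adjoint for the cokernel).

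One small correction: on the half-strip with no condition at $s=0$, the operator $\partial_s+J_0\partial_t$ is not an isomorphism, since it has infinitely many decaying kernel modes. This does not hurt the argument, because you only use the estimate for sections supported in $\{s>R\}$, and that follows from the full-strip isomorphism by extending by zero.
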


\begin{proof}
For any $\epsilon > 0$, the norm $\|\cdot\|_{1,p,(\epsilon)}$ is equivalent to the norm at
$\epsilon = 1$, for which $D_u$ is known to be Fredholm.
\end{proof}

\section{Edge Models}\label{section: edge model}

\subsection{Asymptotic Properties}

Fix $\epsilon > 0$. Let $f_i \colon M \to \R$ be Morse functions for $i \in \{1,2,3\}$, let
$\cP{p}_i \in \crit(f_i - f_{i-1})$, and let $\chi_i \colon [0,\infty) \to M$ be a
half-infinite Morse flow line satisfying
\[
\frac{\dd}{\dd s}\chi_i - \grad_{\epsilon(f_i - f_{i-1})}(\chi_i) = 0,
\qquad
\lim_{s \to \infty}\chi_i(s) = \cP{p}_i,
\qquad
\chi_i(0) = m_0.
\]
Note that by abusing the notation with change the direction of $s$, so that it is more compatible with the $J$-holomorphic strip convention that we are familiar with.

Define
\begin{align}\label{eqn: w_i}
w_i \colon [0,\infty) \times [0,1] \to X,
\qquad
w_i(s,t) = \Phi_i(t)\bigl(\chi_i^\epsilon(s)\bigr), \quad i = 1,2,3,
\end{align}
where:
\begin{enumerate}
  \item $\chi_i^\epsilon(s) = \chi_i(\epsilon s)$,
  \item $\Phi_i(t) = \varphi_{1-t}^{\epsilon f_i} \circ \varphi_t^{\epsilon f_{i-1}}$,
  \item $\varphi_t^H \colon X \to X$ is the time-$t$ Hamiltonian flow of $H \colon X \to \R$.
\end{enumerate}
Then $w_i$ satisfies the Lagrangian boundary conditions
$w_i(\cdot, 0) \subset \Lambda_i^\epsilon$ and $w_i(\cdot, 1) \subset \Lambda_{i-1}^\epsilon$.
A direct computation following \cite[Formula~(5.4)]{fukaya1997zeroloop} gives the following.

\begin{lemma}\label{lemma: dbar J estimate for edge model}
There exists a constant $C$ independent of $\epsilon$ such that
\[
\|\overline{\partial}_J w_i\|_{L^p(M)} \leq C\epsilon^{2 - 1/p}.
\]
\end{lemma}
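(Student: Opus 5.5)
The plan is to compute $\overline{\partial}_J w_i$ explicitly in the splitting $T_{(m,\alpha)}X = T_mM \oplus T_m^*M$ given by the Levi-Civita connection. I will show that the leading $O(\epsilon)$ terms cancel exactly, and that the remainder is pointwise of size $C\epsilon^2 |\grad_{f_i - f_{i-1}}(\chi_i(\epsilon s))|$. The stated power of $\epsilon$ then comes from the rescaling $\sigma = \epsilon s$ in the integral over the half-infinite strip.

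\textbf{Step 1: explicit form of $w_i$.} Regard $\chi_i^\epsilon(s)$ as a point of the zero section. For a function $H$ pulled back from $M$, the Hamiltonian flow $\varphi_t^{\epsilon H}$ is fiberwise translation by $t\epsilon\,\dd H$, up to the sign convention for $\omega$. Composing the two flows in $\Phi_i(t)$ therefore gives
\[
w_i(s,t) = \bigl(\chi_i(\epsilon s),\ \epsilon\,\theta_t(\chi_i(\epsilon s))\bigr),
\qquad \theta_t := t\,\dd f_{i-1} + (1-t)\,\dd f_i .
\]
This makes the boundary conditions $w_i(\cdot,0)\subset\Lambda_i^\epsilon$ and $w_i(\cdot,1)\subset\Lambda_{i-1}^\epsilon$ evident.

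\textbf{Step 2: pointwise computation.} In the connection splitting:
\begin{itemize}
\item $\partial_t w_i = (0,\ \epsilon\,\dd(f_{i-1}-f_i))$ is purely vertical.
\item $\partial_s w_i = \bigl(\epsilon\chi_i'(\epsilon s),\ \epsilon^2 (\nabla_{\chi_i'}\theta_t)(\chi_i(\epsilon s))\bigr)$, with a horizontal part and a covariant-derivative vertical part.
\end{itemize}
Since $J$ is defined pointwise at every $(m,\alpha)$ using this same splitting, $J$ sends the vertical covector $\beta$ to minus its metric dual in the horizontal direction. Hence the horizontal component of $\partial_s w_i + J\partial_t w_i$ is
\[
\epsilon\chi_i' \mp \epsilon\,\grad(f_i - f_{i-1}),
\]
which vanishes by the flow equation for $\chi_i$. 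The orientation convention for $s$ fixed just before the lemma is chosen exactly so that the signs match; I would check this carefully.

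What remains is the vertical term $\epsilon^2\nabla_{\chi_i'}\theta_t$. Its norm is bounded by $C\epsilon^2|\chi_i'(\epsilon s)|$, where $C$ depends only on $\sup|\nabla \dd f_a|$ and not on $\epsilon$. Thus
\[
|\overline{\partial}_J w_i|(s,t) \le C\epsilon^2\,|\grad_{f_i-f_{i-1}}(\chi_i(\epsilon s))| .
\]
The metric on $X$ induced by $\omega(\cdot,J\cdot)$ is the Sasaki-type metric in this splitting, so the norms are uniformly comparable to the $g$-norms of the components. If instead $\Phi_i$ is meant via a genuine pushforward of tangent vectors, the differential $\dd\Phi_i(t)$ differs from the identity by $O(\epsilon)$ in $C^0$. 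That would only introduce further terms of the same order $\epsilon^2|\chi_i'|$.

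\textbf{Step 3: integration.} Integrating over $[0,\infty)\times[0,1]$ and substituting $\sigma=\epsilon s$ gives
\[
\|\overline{\partial}_J w_i\|_{L^p}^p \le C^p\epsilon^{2p}\int_0^\infty|\grad(\chi_i(\epsilon s))|^p\,\dd s = C^p\epsilon^{2p-1}\int_0^\infty |\chi_i'(\sigma)|^p\,\dd\sigma .
\]
The last integral is finite and independent of $\epsilon$. This is because $\chi_i$ converges exponentially to the nondegenerate critical point $\cP{p}_i$, so $|\chi_i'(\sigma)|\le Ce^{-\delta\sigma}$. Taking $p$-th roots yields the bound $C\epsilon^{2-1/p}$.

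The main obstacle is Step 2. One must verify that the cancellation of the $O(\epsilon)$ terms is exact rather than merely up to $O(\epsilon)$, since any leftover $O(\epsilon)$ term would only give $\epsilon^{1-1/p}$. One must also verify that the remainder carries the factor $|\chi_i'|$ rather than a bare constant; a bare $C\epsilon^2$ error would not be integrable over the half-infinite strip. Both points rest on $J$ being built from the same Levi-Civita splitting in which the fiber translation $\theta_t$ is differentiated.
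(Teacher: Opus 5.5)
Your argument is correct and is the direct computation the paper delegates to Fukaya--Oh's formula (5.4). The horizontal $O(\epsilon)$ terms cancel exactly, so $\overline{\partial}_J w_i$ reduces to the vertical term $\tfrac12\epsilon^2\nabla_{\chi_i'}\theta_t$. The substitution $\sigma=\epsilon s$ together with the exponential decay of $\chi_i'$ then gives $C\epsilon^{2-1/p}$.

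When you do the deferred sign check, resolve your $\mp$ explicitly. The stated $J$ and the boundary conditions, which force the fiber translation $+\epsilon\theta_t$, make the horizontal part equal to $\epsilon\bigl(\chi_i'+\grad(f_i-f_{i-1})\bigr)$. So the cancellation requires the introduction's convention $\chi_i'=-\grad(f_i-f_{i-1})$, not the sign printed in the flow equation just before the lemma; with that sign the two terms would add.
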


\subsection{Fredholm Properties}

We drop the subscript $i$ and write $\chi = \chi_i$, $\chi^\epsilon = \chi_i^\epsilon$,
$w = w_i$, $\Phi = \Phi_i$. We study vector fields along $w$ and the linearized operator
$D_w$.

Note that $\Phi(t)^{-1}(w(s,t)) = \chi^\epsilon(s)$ for all
$(s,t) \in [0,\infty) \times [0,1]$, so
\[
\dd\Phi^{-1}(t)(T_{w(s,t)}X) \subset T_{\chi^\epsilon(s)}X.
\]

Let $\bar\chi^\epsilon(s,t) = \chi^\epsilon(s)$ for $t \in [0,1]$. Define
$W^{1,p}_{(\epsilon), 0}((\bar\chi^\epsilon)^*TX)$ to be the space of sections $a$ satisfying:
\begin{enumerate}
  \item $a \in W^{1,p}((\bar\chi^\epsilon)^*TX)$;
  \item $a(s,0) \in T_{\chi^\epsilon(s)}M$ and $a(s,1) \in T_{\chi^\epsilon(s)}M$;
  \item $a(0,t) \in T_{\chi^\epsilon(0)}M$;
  \item $\int_0^1 a(0,t)\,\dd t = 0$.
\end{enumerate}
The norm is
\[
\|a\|_{W^{1,p}_{(\epsilon)}}
= \left(\int_0^\infty\int_0^1
  \epsilon^2|a|^p + \epsilon^{2-p}|\nabla a|^p\,\dd t\,\dd s\right)^{1/p}.
\]

Define $L^p_{(\epsilon)}((\bar\chi^\epsilon)^*TX)$ to be the $L^p$-space with norm
\[
\|b\|_{L^p_{(\epsilon)}}
= \left(\int_0^\infty\int_0^1 \epsilon^{2-p}|b|^p\,\dd t\,\dd s\right)^{1/p}.
\]

Define the operator
\[
D_{\bar\chi^\epsilon} \colon
W^{1,p}_{(\epsilon), 0}((\bar\chi^\epsilon)^*TX)
\to L^p_{(\epsilon)}((\bar\chi^\epsilon)^*TX)
\]
by
\[
D_{\bar\chi^\epsilon} a
= \nabla_s a + J(\bar\chi^\epsilon)\bigl(\nabla_t + \nabla X_{\epsilon(f_i - f_{i-1})}\bigr)a.
\]

Define $W^{1,p}_{(\epsilon), 0}(w^*TX)$ to be the space of sections $\xi$ satisfying:
\begin{enumerate}
  \item $\xi \in W^{1,p}(w^*TX)$;
  \item $\xi(s,0) \in T_{w(s,0)}\Lambda_i$ and $\xi(s,1) \in T_{w(s,1)}\Lambda_{i-1}$;
  \item $\xi(0,t) \in \dd\Phi(t)\,T_{\chi^\epsilon(0)}M$;
  \item $\int_0^1 \dd\Phi^{-1}(t)\,\xi(0,t)\,\dd t = 0$.
\end{enumerate}
The norm is
\[
\|\xi\|_{W^{1,p}_{(\epsilon)}}
= \left(\int_0^\infty\int_0^1
  \epsilon^2|\xi|^p + \epsilon^{2-p}|\nabla\xi|^p\,\dd t\,\dd s\right)^{1/p}.
\]

Define $L^p_{(\epsilon)}(w^*TX)$ with norm
\[
\|\eta\|_{L^p_{(\epsilon)}}
= \left(\int_0^\infty\int_0^1 \epsilon^{2-p}|\eta|^p\,\dd t\,\dd s\right)^{1/p}.
\]

Let $D_w \colon W^{1,p}_{(\epsilon), 0}(w^*TX) \to L^p_{(\epsilon)}(w^*TX)$ be defined by
\[
D_w = \dd\Phi \circ D_{\bar\chi^\epsilon} \circ \dd\Phi^{-1}(t),
\]
so that the following diagram commutes:
\begin{equation}\label{eqn: pushing to morse}
\begin{tikzcd}
W^{1,p}_{(\epsilon), 0}(w^*TX)
  \arrow[r, "D_w"]
  \arrow[d, "\dd\Phi^{-1}(t)"']
& L^p_{(\epsilon)}(w^*TX)
  \arrow[d, "\dd\Phi^{-1}(t)"] \\
W^{1,p}_{(\epsilon), 0}((\bar\chi^\epsilon)^*TX)
  \arrow[r, "D_{\bar\chi^\epsilon}"]
& L^p_{(\epsilon)}((\bar\chi^\epsilon)^*TX)
\end{tikzcd}
\end{equation}

The operator $D_w$ is related to the linearization of $\overline{\partial}_J$ as follows.

\begin{lemma}\label{lemma: linearization}
Let $\mathcal{L}_w$ denote the linearization of $\overline{\partial}_J$ at $w$. Then
\[
\overline{\partial}_J(\exp_w \xi)
= \overline{\partial}_J w + D_w\xi
+ O\bigl(\epsilon^2|\xi| + \epsilon|\nabla\xi| + \epsilon|\xi|^2 + |\xi|\cdot|\nabla\xi|\bigr).
\]
\end{lemma}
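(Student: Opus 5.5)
We plan to prove Lemma~\ref{lemma: linearization} by a Taylor expansion of $\overline{\partial}_J(\exp_w\xi)$ in $\xi$, organized so that every term can be compared with $D_w\xi = \dd\Phi\circ D_{\bar\chi^\epsilon}\circ\dd\Phi^{-1}\xi$. Working in the coordinates $s+\iu t$, we write
\[
\overline{\partial}_J(\exp_w\xi)=\tfrac12\bigl(\p_s(\exp_w\xi)+J(\exp_w\xi)\,\p_t(\exp_w\xi)\bigr)\otimes(\dd s-\iu\,\dd t),
\]
and we parallel transport back to $w$ along $\tau\mapsto\exp_w(\tau\xi)$. The standard expansion (as in McDuff--Salamon) gives
\[
\overline{\partial}_J w+\mathcal{L}_w\xi+Q_w(\xi),
\qquad |Q_w(\xi)|\le C\bigl(|\xi|^2|\dd w|+|\xi||\nabla\xi|\bigr).
\]
Here $C$ depends only on the geometry of $(X,J,g)$ near the zero section. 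Since $|\p_s w|=O(\epsilon)$ (because $\chi^\epsilon(s)=\chi(\epsilon s)$) and $|\p_t w|=O(\epsilon)$ (the Hamiltonian vector fields of $\epsilon f_i$ are $O(\epsilon)$), the quadratic remainder is $O(\epsilon|\xi|^2+|\xi||\nabla\xi|)$. The one subtlety is that the Lagrangian boundary conditions are preserved: we use a metric for which $\Lambda_i^\epsilon$ are totally geodesic, or we note that this affects only the remainder bounds by the same orders.

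The main step is comparing the true linearization $\mathcal{L}_w\xi$, given by formula~\eqref{formula: D}, with $D_w\xi$. We set $a=\dd\Phi^{-1}(t)\xi$, a section along $\bar\chi^\epsilon$. Then
\[
\nabla_t\xi=\dd\Phi\bigl(\nabla_t a+\nabla X_{\epsilon(f_i-f_{i-1})}a\bigr)+E_1,
\]
since differentiating $\dd\Phi(t)$ in $t$ produces the linearized Hamiltonian vector field, and $E_1$ comes from the commutator between $\nabla$ and $\dd\Phi$. In the same way, $\nabla_s\xi=\dd\Phi\,\nabla_s a+E_2$. The flows $\varphi^{\epsilon f}_t$ are $\epsilon$-close to the identity in $C^2$, and they are actually fiberwise translations by $\epsilon\,\dd f$. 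Hence $\dd\Phi=\mathrm{id}+O(\epsilon)$, with $t$-derivative $O(\epsilon)$ and first-order nonlinearity in the base direction. It follows that $E_1,E_2=O(\epsilon|\nabla\xi|+\epsilon^2|\xi|)$: the $\epsilon|\nabla\xi|$ part comes from $\dd\Phi-\mathrm{id}$ acting on derivatives, and the $\epsilon^2|\xi|$ part comes from curvature or Christoffel terms multiplied by $|\p w|=O(\epsilon)$.

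Two further replacements are needed. First, we replace $J(w)$ by $\dd\Phi\,J(\bar\chi^\epsilon)\,\dd\Phi^{-1}$. This is not exact, because the fiber translation is $J$-holomorphic only up to $O(\epsilon)$ when $g$ is not flat, and it costs $O(\epsilon)\cdot|\nabla_t\xi|$ together with $O(\epsilon)\cdot\epsilon|\xi|$. Second, the term $\tfrac12 J(\nabla_\xi J)\p_J w$ is bounded by $|\xi|\,|\dd w|$. We need it to be $O(\epsilon^2|\xi|)$, and a bound of $O(\epsilon|\xi|)$ would not suffice. This is where care is needed: we use $\p_J w=\p_s w-J\p_t w$. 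At $\epsilon$-order this equals $\epsilon(\chi'-\grad(f_i-f_{i-1}))$ transported, which vanishes by the flow equation. So $|\p_J w|$ must be estimated by the $\overline{\partial}$-part, while the remaining part is absorbed. Alternatively, we note that $\nabla J$ is $O(|\text{fiber coordinate}|)=O(\epsilon)$ along $w$, because $J$ is parallel along the zero section for the canonical $J$ restricted to the $\nabla$-horizontal splitting. Either way this term is $O(\epsilon\cdot\epsilon|\xi|)$.

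Collecting all terms, $\overline{\partial}_J(\exp_w\xi)-\overline{\partial}_J w-D_w\xi$ is bounded by $E_1,E_2$, the $J$-comparison error, the $\nabla J$ term, and $Q_w$. This gives exactly $O(\epsilon^2|\xi|+\epsilon|\nabla\xi|+\epsilon|\xi|^2+|\xi||\nabla\xi|)$. We expect the main obstacle to be the precise computation of $\dd\Phi$ and of its commutation with $\nabla$ and $J$ in the Levi-Civita splitting. Here we would follow the explicit computation of \cite[Formula~(5.4)]{fukaya1997zeroloop} already invoked for Lemma~\ref{lemma: dbar J estimate for edge model}, tracking one extra order in $\xi$.
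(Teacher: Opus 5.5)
Your proposal follows the paper's route: the standard McDuff--Salamon quadratic expansion with $|\dd w| = O(\epsilon)$, followed by the Fukaya--Oh comparison of the true linearization $\mathcal{L}_w$ with $D_w = \dd\Phi\circ D_{\bar\chi^\epsilon}\circ\dd\Phi^{-1}$; the paper simply cites that comparison, and you unpack it term by term. One correction: your first justification for the term $\tfrac12 J(\nabla_\xi J)(w_s - Jw_t)$ fails, because it is $w_s + Jw_t$ that is $O(\epsilon^2)$, whereas $w_s - Jw_t = 2w_s + O(\epsilon^2)$ is genuinely of order $\epsilon$. The needed $O(\epsilon^2|\xi|)$ bound therefore comes only from your second argument: $\nabla J = O(|p|)$ vanishes along the zero section for the Sasaki metric $\omega(\cdot,J\cdot)$, and $w$ has fiber coordinate $O(\epsilon)$.
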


\begin{proof}
By the standard linearization formula for $\overline{\partial}_J$
(see e.g.\ \cite{mcduff2012j}),
\[
\overline{\partial}_J(\exp_w\xi)
= \overline{\partial}_J w + \mathcal{D}_w\xi
+ O\bigl(\epsilon|\xi|^2 + |\xi|\cdot|\nabla\xi|\bigr),
\]
where $\mathcal{D}_w$ is the linearization with respect to the Levi-Civita connection. The
comparison of $\mathcal{D}_w$ and $D_w$ is carried out in the proof of
Proposition~7.1 of \cite{fukaya1997zeroloop}: the pointwise difference is
$O(\epsilon^2|\xi| + \epsilon|\nabla\xi|)$.
\end{proof}

The half-infinite flow line $\chi$ extends uniquely to a complete flow line
$\tilde\chi \colon \R \to M$. Denote $\cP{p} = \lim_{s \to +\infty}\tilde\chi(s)$ and
$\cP{q} = \lim_{s \to -\infty}\tilde\chi(s)$. We say $\chi$ is \emph{transversely cut out} if
the descending manifold of $\cP{p}$ and the ascending manifold of $\cP{q}$ intersect
transversely at $\tilde\chi(s)$ for every $s \in [0,\infty)$.

\begin{proposition}\label{prop: fredholm index of edge}
The operator $D_w$ is Fredholm with
$\dim\ker D_w = 0$ and $\dim\op{coker} D_w = n - \ind(\cP{p})$.
\end{proposition}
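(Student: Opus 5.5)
Since the diagram \eqref{eqn: pushing to morse} commutes and $\dd\Phi(t)$, $\dd\Phi^{-1}(t)$ are isomorphisms between the corresponding $W^{1,p}_{(\epsilon),0}$ and $L^p_{(\epsilon)}$ spaces, I would prove the statement for $D_{\bar\chi^\epsilon}$ instead of $D_w$. Along the zero section I split $a=(\alpha,\beta)$ with $\alpha\in T_{\chi^\epsilon(s)}M$ and $\beta\in T^*_{\chi^\epsilon(s)}M\cong T_{\chi^\epsilon(s)}M$ (identified via $g$). Then $J$ acts as the standard rotation. The Hamiltonian vector field $X_{\epsilon(f_i-f_{i-1})}$ is vertical along the zero section, so its covariant derivative sends $\alpha$ to $\epsilon\,\mathrm{Hess}(f_i-f_{i-1})\alpha$ in the fiber direction. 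In these coordinates $D_{\bar\chi^\epsilon}$ becomes a Cauchy--Riemann type operator $\nabla_s a+J_0\partial_t a+\epsilon S(s)a$, where $S(s)$ is built from the Hessian along $\chi^\epsilon$. The boundary conditions are: $\beta=0$ on $t=0,1$, $\beta(0,t)=0$, and $\int_0^1\alpha(0,t)\,\dd t=0$.

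\textbf{Step 1 (Fredholmness).} The domain is a half-strip with totally real boundary conditions. Near $s=\infty$ the operator converges to the translation-invariant operator at $\cP{p}$, whose asymptotic operator is invertible because $\mathrm{Hess}(f_i-f_{i-1})(\cP{p})$ is nondegenerate. Near $s=0$ the side condition $\beta(0,t)=0$, $\int\alpha(0,t)\,\dd t=0$ is an elliptic boundary condition on the corners plus a finite-codimension constraint. The standard parametrix/patching argument (as in \cite{fukaya1997zeroloop}, Section 6) then gives Fredholmness for each fixed $\epsilon$. By the same computation as in the proof of the Fredholm lemma for $D_u$, the $\epsilon$-weights do not affect this.

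\textbf{Step 2 (Fourier reduction).} Expand $\alpha=\alpha_0(s)+\sum_{k\ge1}\alpha_k(s)\cos(k\pi t)$ and $\beta=\sum_{k\ge1}\beta_k(s)\sin(k\pi t)$. For $\epsilon=0$ the operator is diagonal in $k$.
\begin{itemize}
\item For $k\ge1$, the pair $(\alpha_k,\beta_k)$ satisfies $\partial_s+k\pi$-type equations on $[0,\infty)$ with the boundary condition $\beta_k(0)=0$. These are invertible, with inverse bounded by $C/k$.
\item The zero mode satisfies the linearized gradient flow equation $\alpha_0'=\epsilon\nabla\grad_{f_i-f_{i-1}}(\chi^\epsilon)\alpha_0$ with $\alpha_0(0)=0$.
\end{itemize}
After rescaling $s\mapsto\epsilon s$ (this is exactly what the $W^{1,p}_{(\epsilon)}$ weights are designed to make uniform), the zero mode becomes the Morse operator along $\chi$. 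Since the flow here is $\dot\chi=+\grad$ and converges to $\cP{p}$, the decaying solutions of the linearized equation form the tangent space to the stable manifold, of dimension $\ind(\cP{p})$. Without a boundary condition at $0$ the operator is therefore surjective with $\ind(\cP{p})$-dimensional kernel. Imposing $\alpha_0(0)=0$, which is $n$ conditions, kills the kernel by uniqueness of ODE solutions. The resulting index is $\ind(\cP{p})-n$, so $\ker=0$ and $\dim\op{coker}=n-\ind(\cP{p})$. The cokernel is spanned by adjoint solutions decaying at $\infty$, with no constraint at $s=0$.

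\textbf{Step 3 (coupling).} For $\epsilon>0$ the term $\epsilon S$ couples the modes, and it is not diagonal in $k$. Because the $k\ge1$ block is uniformly invertible while the coupling has size $O(\epsilon)$, I would handle it by a Schur-complement / Lyapunov--Schmidt argument. Solve the nonzero modes in terms of $\alpha_0$ with error $O(\epsilon)$; the reduced operator on the zero mode is then a small perturbation of the Morse operator. Kernel triviality and the cokernel dimension are stable under small perturbations, given the index computation and injectivity. A cleaner alternative would avoid smallness altogether: the index is homotopy invariant in $\epsilon$, and injectivity follows by pairing with $a$ and integrating by parts.

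The main obstacle is making Step 3 uniform in $\epsilon$. The zero mode lives at scale $s\sim1/\epsilon$ while the higher modes live at scale $1$, so the estimates must be organized in the weighted norms $\|\cdot\|_{W^{1,p}_{(\epsilon)}}$, $\|\cdot\|_{L^p_{(\epsilon)}}$. The $O(\epsilon)$ coupling must also be shown to be genuinely small relative to the (rescaled) Morse operator's lower bound. I would first try to prove a uniform estimate $\|a\|_{W^{1,p}_{(\epsilon)}}\le C\|D_{\bar\chi^\epsilon}a\|_{L^p_{(\epsilon)}}$, which gives injectivity, and then read off the cokernel dimension from the index.
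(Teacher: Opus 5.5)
Your Step~3 rests on a false premise; once you remove it, your Step~2 is already a complete proof.

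\textbf{The modes do not couple.} Because $\bar\chi^\epsilon(s,t)=\chi^\epsilon(s)$, every coefficient of $D_{\bar\chi^\epsilon}$ depends only on $s$. Along the zero section, $J\nabla X_{\epsilon(f_i-f_{i-1})}$ sends $(\alpha,\beta)\mapsto(\pm\epsilon S(s)\alpha,0)$. The vertical part is annihilated because $X_{\epsilon(f_i-f_{i-1})}$ is constant along the fibres, so this term preserves the horizontal/vertical splitting. Expand $\alpha$ in $\cos(k\pi t)$ and $\beta$ in $\sin(k\pi t)$, and the two target components likewise. Then the operator is \emph{exactly} block-diagonal for every $\epsilon>0$. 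The term $\epsilon S$ sits inside each block and couples nothing.

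\textbf{The boundary conditions split the same way.} The condition $a(0,t)\in TM$ becomes $\beta_k(0)=0$ for $k\ge1$. The condition $\int_0^1\alpha(0,t)\,\dd t=0$ is precisely $\alpha_0(0)=0$.

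\textbf{The blocks with $k\ge1$ are isomorphisms.} Each is an ODE $\partial_s+A_k(s)$ whose limit matrix has $n$ positive and $n$ negative eigenvalues. After imposing $\beta_k(0)=0$ it has index $0$. It is injective: for a decaying kernel element, $\pm\frac{\dd}{\dd s}\langle\alpha_k,\beta_k\rangle\ge(k\pi-C\epsilon)(|\alpha_k|^2+|\beta_k|^2)$, while $\langle\alpha_k,\beta_k\rangle$ vanishes at $s=0$ and at $\infty$.

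\textbf{The $k=0$ block finishes the proof.} It is literally the linearized Morse operator with Dirichlet condition $\alpha_0(0)=0$, and your count (index $\ind(\cP{p})-n$, trivial kernel) applies. For the cokernel, test $b\in L^q$ against $x(s)\cos(k\pi t)$ and $y(s)\sin(k\pi t)$; the $k\ge1$ modes of $b$ vanish. What remains are decaying solutions of the adjoint Morse equation with no constraint at $s=0$.

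\textbf{What Step~3 was trying to do is unnecessary, and its fallback fails.} You need no Schur complement, no smallness relative to the Morse operator, and no $\epsilon$-uniform estimate. The proposition is a statement at fixed $\epsilon$; uniform bounds on $D_i^{-1}$ are a separate issue used later in the contraction argument. Your fallback, homotopy invariance of the index in $\epsilon$, would not work. For the same reason Step~2 should not be run ``at $\epsilon=0$''. There $\chi^\epsilon\equiv m_0$ and the asymptotic operator on the constant mode is $\epsilon\,\mathrm{Hess}=0$, so the family leaves the Fredholm operators.

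\textbf{Comparison with the paper.} With this correction your argument is a genuinely different route. The paper does not diagonalize; it follows Fukaya--Oh's Proposition~6.1.
\begin{itemize}
\item Kernel: integration by parts shows kernel elements are horizontal and $t$-independent. They therefore solve the linearized Morse equation, and the averaged condition forces $a(0)=0$.
\item Cokernel: it pairs $b=(e,f)$ with the domain and uses the averaged condition to make $e(0,\cdot)$ constant. This gives $\gamma'(0)=0$ for $\gamma(\tau)=\frac12\int_0^1|f|^2\,\dd t$, and a maximum principle for $\gamma$ kills $f$. What is left are $t$-independent horizontal decaying solutions of the adjoint Morse equation, a space of dimension $n-\ind(\cP{p})$.
\end{itemize}
Your mode decomposition reaches the same reduction to the $k=0$ sector for kernel and cokernel at once, using only elementary ODE facts. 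It also makes transparent why the \emph{averaged} (rather than pointwise) condition on $\alpha(0,\cdot)$ yields index exactly $\ind(\cP{p})-n$: that condition acts on the zero mode alone. The paper's energy and maximum-principle argument avoids Fourier-expanding $L^p$ and $L^q$ sections. In your route that step needs the routine regularity and mode-by-mode testing justification indicated above.
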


\begin{proof}
Since $\dd\Phi(t)$ is an isomorphism, it suffices to prove the statement for
$D_{\bar\chi^\epsilon}$. Let $a \in \ker D_{\bar\chi^\epsilon}$. By the proof of
Proposition~6.1 in \cite{fukaya1997zeroloop}, $a_i$ takes values in $TM$,
and is independent of $t$. Thus $a_i(s,t)=a_i(s)$ satisfies the linearized Morse equation
\[
\nabla_s a + \nabla_a\grad_{\epsilon(f_i - f_{i-1})}(\chi^\epsilon) = 0.
\]
Hence $a$ is determined by $a(0) \in T_{m_0}M$. Since
$a \in W^{1,p}_{(\epsilon), 0}((\bar\chi^\epsilon)^*TX)$, we have $\lim_{s\to\infty}a(s) = 0$,
so $a(0)$ must be tangent to the descending manifold of $\cP{p}$. But the taming condition implies $a(0) = 0$. Hence $\ker D_{\bar\chi^\epsilon} = 0$.

For the cokernel, let $b \in L^q((\bar\chi^\epsilon)^*TX)$ satisfy
$\langle D_{\bar\chi^\epsilon} a, b\rangle = 0$ for all
$a \in W^{1,p}_{(\epsilon), 0}((\bar\chi^\epsilon)^*TX)$. By the same argument (it is actually harder than the kernel part, and there are typos in that part of \cite{fukaya1997zeroloop}, so we give a sketch of their argument: in \cite{fukaya1997zeroloop}
Proposition~6.1, using integration by parts to pair the cokernel element $b_i^\eps=(e_i^\eps,f_i^\eps)$ with $a_i^\eps$, the integration zero condition of $a_i^\eps$ along the boundary cord
$\{0\}\times[0,1]$ forces the cokernel horizontal component
$e_i^\eps(0,\cdot)$ in $T^*M$ to be a constant, which in turn forces
$\nabla_\tau f_i^\eps(0,t)=0$ for all $t\in[0,1]$, where $f_i^\eps$
is the cokernel vertical component. This shows
$\frac{d}{d\tau}\gamma_i^\eps(0)=0$,  where
$\gamma_i^\eps(\tau)=\frac{1}{2}\int_0^1|f_i^\eps(\tau,t)|^2\,dt$.  Then a maximum principle applied
to the ODE inequality of $\gamma_i^\eps(\tau)$ implies
$\gamma_i^\eps(\tau)\equiv 0$, so $ f_i^\eps\equiv 0$.
Then it implies $\nabla_\tau f_i^\eps\equiv 0$, and by the relation of $e_i^\eps$
to $f_i^\eps$, $\nabla_t e_i^\eps\equiv 0$, so the cokernel element $b_i$
is $t$-independent). 
Thus $b$ takes values in $TM$, is independent of $t$, and satisfies
\begin{equation}\label{eqn: adjoint equation for edge}
  -\nabla_s b + \nabla_b\grad_{\epsilon(f_i - f_{i-1})}(\chi^\epsilon) = 0.
\end{equation}
Since $b \in L^q((\bar\chi^\epsilon)^*TX)$, we have $\lim_{s\to \infty}b(s) = 0$, so $b(0)$
must be perpendicular to the descending manifold of $\cP{p}$. Thus
$\dim\op{coker} D_{\bar\chi^\epsilon} = n - \ind(\cP{p})$.
\end{proof}

\section{Vertex Models}\label{section: local model}

\subsection{Asymptotic Behavior}

Given three vectors $c_1, c_2, c_3 \in \R^n$, define Lagrangian subspaces
$\td{\tup\Lambda} = (\td\Lambda_1, \td\Lambda_2, \td\Lambda_3)$ in $\C^n$ by
\[
\td\Lambda_i = \{x + \iu c_i \mid x \in \R^n\}.
\]
Define paths $\gamma_i \colon [0,1] \to \R^n$ by $\gamma_i(t) = t(c_{i+1} - c_i) + c_i$, and
let $\Sigma$ denote the unit disk in $\C$.

A \emph{local model} is a pair $(\tup{z}, \tdlm)$ consisting of:
\begin{enumerate}
  \item three distinct boundary marked points $\tup{z} = (z_1, z_2, z_3)$ on $\partial\Sigma$,
  ordered counterclockwise;
  \item for each $i$, a biholomorphism $\phi_i \colon [0,\infty) \times [0,1] \to V_i\setminus z_i$
  for some open neighborhood $V_i$ of $z_i$ in $\Sigma$;
  \item a smooth map $\tdlm \colon \Sigma \setminus \tup{z} \to \C^n$ satisfying
  \[
  \overline{\partial}\tdlm = 0, \qquad
  \tdlm(\partial_i\dot\Sigma) \subset \td\Lambda_i, \qquad
  \lim_{s\to\infty} \operatorname{Im}(\tdlm(\phi_i(s,t))) = \gamma_i(t),
  \]
  where $\partial_i\dot\Sigma$ is the boundary arc of $\partial\Sigma \setminus \tup{z}$
  between $z_i$ and $z_{i+1}$.
\end{enumerate}
Two local models $(\tup{z}, \tdlm)$ and $(\tup{z}', \tdlm')$ are \emph{equivalent} if there exists
a biholomorphism $\psi \colon \Sigma \to \Sigma$ with $\psi(\tup{z}) = \tup{z}'$ and
$\tdlm = \tdlm' \circ \psi$.

Note that $\tdlm$ can be translated by any constant $v \in \R^n \subset \C^n$ to give another
local model. In fact, the following holds.

\begin{proposition}[Proposition~4.1 in \cite{fukaya1997zeroloop}]
The moduli space of local models is diffeomorphic to $\R^n$.
\end{proposition}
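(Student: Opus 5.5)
The plan is to write down every local model explicitly and to show that any two local models differ by a real translation $v \in \R^n$. The action of $\R^n$ by translation is then free and transitive on equivalence classes, which gives the diffeomorphism with $\R^n$.

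\emph{Step 1: normalizations.} The automorphism group of $\Sigma$ acts simply transitively on counterclockwise triples of boundary points. So every class has a unique representative with $\tup{z}$ fixed. Transport it to the upper half plane $\hh$ with punctures at $x_1=0$, $x_2=1$, $x_3=\infty$ (in the cyclic order dictated by the arcs). After this choice there is no residual reparametrization, so the moduli space is just the set of maps $\tdlm$ with these fixed punctures.

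The equation $\overline{\partial}\tdlm=0$ is for the standard $J_0$, and the boundary conditions $\operatorname{Im}\tdlm \in \{c_i\}$ are imposed coordinatewise. Hence the problem splits into $n$ independent scalar problems. Each asks for a holomorphic function $F$ on $\hh$ whose imaginary part is locally constant on $\R\setminus\{0,1\}$, equal to the $k$-th coordinate of the appropriate $c_i$ on each of the three arcs.

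\emph{Step 2: existence.} For the scalar problem I take
\[
F(z) = a + \sum_{j} \frac{\delta_j}{\pi}\log(z-x_j), \qquad a\in\R+\iu c,
\]
with branches of $\log$ that are real on $(x_j,\infty)$. Here $\delta_j$ are the jumps of the boundary data at the finite punctures, and $c$ is the value on the rightmost arc. Then $\operatorname{Im}F$ is the Poisson extension of the step function. Near a puncture, $\operatorname{Im}F$ is an affine function of the angle $\arg(z-x_j)$.

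In the strip coordinate $\phi_j(s,t)$, take $z-x_j\approx -e^{-\pi(s+\iu t)}$ or the analogous chart at $\infty$, so that the angle is affine in $t$ up to errors decaying like $e^{-\pi s}$. This shows $\operatorname{Im}F(\phi_j(s,t))\to\gamma_j(t)$, the linear interpolation between the neighbouring $c$'s. Checking that the orientation conventions match $\gamma_i(t)=t(c_{i+1}-c_i)+c_i$ is bookkeeping: one only needs to label the arcs so that $\partial_i\dot\Sigma$ carries $c_i$.

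\emph{Step 3: uniqueness up to real translation.} This is the main point. Let $F,F'$ be two solutions and set $G=F-F'$. Then $h=\operatorname{Im}G$ is harmonic on $\hh$, vanishes on $\R\setminus\{0,1\}$, and tends to $0$ uniformly along each strip end, since both imaginary parts converge to the same $\gamma_j$.

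The delicate part is the boundedness needed for the maximum principle; a priori $h$ could have a Poisson-kernel type singularity at a puncture. The asymptotic condition rules this out because the convergence in the definition of a local model is uniform in $t$ (this is the interpretation I would adopt). Therefore $h$ is bounded near each puncture and continuous elsewhere on the compact complement.

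Now apply the maximum principle on $\Sigma$ minus small half-discs around the punctures. On those arcs $|h|$ is small, and it vanishes on the rest of the boundary, so $\sup|h|\to0$ as the half-discs shrink. Hence $h\equiv0$. A holomorphic function with vanishing imaginary part is a real constant, so $F=F'+v$ with $v\in\R$. Assembling the coordinates, $\tdlm'=\tdlm+v$ with $v\in\R^n$.

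Conversely, translation by $v\in\R^n$ preserves all the boundary and asymptotic conditions. It acts freely, since $\tdlm+v=\tdlm\circ\psi$ forces $\psi=\mathrm{id}$ by the normalization, and then $v=0$. So the map $v\mapsto[\tdlm_0+v]$ is a bijection $\R^n\to$ the moduli space. It is a diffeomorphism for the smooth structure coming from the Fredholm description, because the tangent directions are exactly the constants in the kernel of $\overline{\partial}$ with these linear boundary conditions.
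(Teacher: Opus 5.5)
Your proof is correct, but it follows a different route from the paper, which gives no argument for this proposition at all. The paper only observes that real translations $\tdlm\mapsto\tdlm+v$ preserve local models, cites Fukaya--Oh, and later imports from Ruan (Lemma~\ref{lemma: local model asymptotic property}) a model with $e^{-\pi s}$ asymptotics. You supply both halves yourself.

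\emph{Existence.} Your logarithmic formula is legitimate because $J_0$ and the boundary conditions split coordinatewise. It also recovers the asymptotics the paper imports: in the exponential chart it equals $\td{L}_j$ plus a real constant plus $O(e^{-\pi s})$.

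\emph{Uniqueness up to translation.} This uses the same reflection/maximum-principle mechanism the paper uses to prove $\ker\overline{\partial}=0$ in Proposition~\ref{prop: computation of cokernel}. The one new feature is that two local models differ by a map that need not decay, so boundedness at the punctures must come from the asymptotic condition.

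Your decision to read that condition as uniform in $t$ is therefore essential, not cosmetic. Under a merely pointwise reading, uniqueness genuinely fails. Take a nonconstant entire $g$ with real Taylor coefficients tending to $0$ along every ray; such functions exist classically. Then $\tdlm+g(-1/z)\,e$, with $e\in\R^n$, satisfies every condition, using the chart $z=-\tfrac12 e^{-\pi(s+\iu t)}$ at $z=0$. Yet it is not a translate of $\tdlm$.

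Three small points would tighten the write-up.
\begin{enumerate}
\item If the ends $\phi_i$ are part of the data, note that two such charts at the same puncture differ by $s\mapsto s+c$ up to $O(e^{-\pi s})$. So the uniform limit condition is chart-independent, and your $h\to 0$ makes sense even when the two models carry different ends.
\item The ``Fredholm description'' behind the smoothness claim must use a negative weight $W^{1,p}_{-\delta}$. There the index is $-2n+3n=n$, the kernel is exactly the real constants, and the cokernel vanishes. On the paper's positively weighted $W^{1,p}_\delta$ the kernel is zero.
\item The relabeling you call bookkeeping is actually forced. Read literally, the conventions of Section~\ref{section: local model} put $c_{i-1}$ and $c_i$ on the arcs adjacent to $z_i$, while $\gamma_i$ runs from $c_i$ to $c_{i+1}$; this is inconsistent for distinct $c_i$. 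The introduction's convention, where the arc between $z_{i-1}$ and $z_i$ carries $c_i$, is the one compatible with $\gamma_i$.
\end{enumerate}
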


The local models can be written down explicitly. Define the linear map
$\td{L}_i \colon [0,\infty) \times [0,1] \to \C^n$ by
\begin{equation}\label{eqn: linear model}
\td{L}_i(s,t) := (s + \iu t)(c_{i+1} - c_i) + \iu c_i.
\end{equation}

\begin{lemma}[Section~7.1 in \cite{ruan2006fukayacategory}]\label{lemma: local model asymptotic property}
There exists a unique local model $\tdlm$ and coordinates $\phi_i$ such that on each end,
\[
\left|\tdlm(s,t) - \td{L}_i(s,t)\right| \leq Ce^{-\pi s}.
\]
\end{lemma}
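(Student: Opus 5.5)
The plan is to write the local model down explicitly. Since each $\td\Lambda_i = \R^n + \iu c_i$ is a product of horizontal lines, the problem splits into $n$ scalar problems, one for each coordinate of $c_i \in \R^n$. All of these scalar problems share the same domain and punctures, so I will solve them with a single vector-valued formula.

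\textbf{Construction.} Using the automorphism group, identify $(\Sigma,\tup z)$ with the closed upper half plane $\overline{\mathbb H}$ with punctures $x_1<x_2<x_3$ on $\R\cup\{\infty\}$. I will take $x_3=\infty$ or keep three finite points, whichever gives the cleaner formula. Label the boundary arcs so that $\partial_i\dot\Sigma$ lies between $x_i$ and $x_{i+1}$. Consider
\[
\tdlm(z) = \sum_{i} \frac{a_i}{\pi}\,\log(z-x_i) + b, \qquad a_i\in\R^n,\ b\in\C^n,
\]
with the principal branch on $\overline{\mathbb H}$. On each real interval $\op{Im}\tdlm$ is constant. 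Crossing $x_i$ from right to left, $\op{Im}\log(z-x_i)$ jumps from $0$ to $\pi$, so $\op{Im}\tdlm$ jumps by $a_i$. Choosing $a_i = \pm(c_{i+1}-c_i)$, with the sign fixed by orientation, and $\op{Im} b$ appropriately, gives the boundary conditions $\tdlm(\partial_i\dot\Sigma)\subset\td\Lambda_i$. This works because $\sum_i (c_{i+1}-c_i)=0$, which is exactly the condition for the three jumps to close up consistently around $\infty$.

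\textbf{Strip coordinates and asymptotics.} Near $x_i$, take
\[
\phi_i(s,t) = x_i + \rho_i\, e^{-\pi(s+\iu t)}
\]
(up to a choice of sign or orientation), with $\rho_i>0$. This is a biholomorphism from $[s_*,\infty)\times[0,1]$ onto a punctured half-disc neighborhood of $x_i$. In this coordinate, $\frac{a_i}{\pi}\log(z-x_i) = -a_i(s+\iu t) + \frac{a_i}{\pi}\log\rho_i$, which is linear in $s+\iu t$. The remaining terms $\frac{a_j}{\pi}\log(z-x_j)$ for $j\neq i$ are holomorphic near $x_i$. Their Taylor expansion in $z-x_i = O(e^{-\pi s})$ gives a constant plus $O(e^{-\pi s})$. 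Now fix the free constants: the real parts of the constants at the three ends are matched to zero by the choices of $\rho_i$ (a scaling of $\phi_i$, i.e.\ a shift in $s$), and by the real part of $b$. This yields $|\tdlm\circ\phi_i - \td L_i|\le Ce^{-\pi s}$. Any constant real translation can be absorbed, and the linear term matches $\td L_i$ by the choice of $a_i$. Finally shift $s$ so that the domain is $[0,\infty)\times[0,1]$. The rate $\pi$ is exactly the first nonzero eigenvalue of $-\iu\partial_t$ on $[0,1]$ with real boundary conditions, so it is the expected sharp rate.

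\textbf{Uniqueness.} Suppose $(\tdlm,\phi_i)$ and $(\tdlm',\phi_i')$ are two such local models, and use the automorphism group to put both on the same marked disc. The asymptotic condition forces $\phi_i'$ to differ from $\phi_i$ only by an $s$-translation up to $O(e^{-\pi s})$. Then $\tdlm-\tdlm'$ is holomorphic, takes values in $\R^n$ on every boundary arc (the imaginary parts agree), and is bounded near each puncture, up to a linear discrepancy coming from the $s$-shifts. By Schwarz reflection it extends to a holomorphic map on the Riemann sphere minus three points with at most logarithmic growth. By the removable singularity theorem and Liouville, only the log terms and a constant remain. The normalization $|\tdlm-\td L_i|\to 0$ at all three ends then kills both the log coefficients and the real constant.

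\textbf{Main obstacle.} I expect the only delicate point to be the bookkeeping of normalizations. The freedom consists of the $\R^n$ of real translations from the preceding proposition together with the $s$-shifts of the $\phi_i$. The real constant at each end must be simultaneously set to zero. This is one real scalar per end per coordinate, against the three parameters $\rho_i$ plus $\op{Re} b$. I would check that the $\rho_i$ can be chosen independently of the coordinate index: scaling $\rho_i$ shifts the $i$-th end's constant by a multiple of $a_i$, which is parallel to $c_{i+1}-c_i$, so the solvability of this small linear system is the step I would verify first.
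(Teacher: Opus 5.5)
Your explicit logarithmic construction is the right route to existence. However, the proposal leaves unverified exactly the step existence rests on, and the uniqueness argument breaks at its last sentence.

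\textbf{Existence.} Matching the real constant at all three ends at once imposes $3n$ real conditions on the $n+3$ unknowns $\op{Re}b,\rho_1,\rho_2,\rho_3$. For $n\ge 2$ this system is overdetermined, so it cannot be left as bookkeeping. It does close up, and you should include the argument.

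\begin{enumerate}
\item Set $\phi_j(s,t)=x_j-\rho_j e^{-\pi(s+\iu t)}$ and $\ell_{jk}=\log|x_j-x_k|$. The real part of the constant term at the $j$-th end is $\frac{1}{\pi}\bigl(a_j\log\rho_j+\sum_{k\ne j}a_k\ell_{jk}\bigr)+\op{Re}b$.
\item Choose $\rho_j=|x_j-x_k|\,|x_j-x_m|/|x_k-x_m|$ for $\{j,k,m\}=\{1,2,3\}$.
\item Using $a_j=-a_k-a_m$, the bracket becomes $-(a_1\ell_{23}+a_2\ell_{13}+a_3\ell_{12})$, which does not depend on $j$.
\item Hence $\op{Re}b=\frac{1}{\pi}(a_1\ell_{23}+a_2\ell_{13}+a_3\ell_{12})$ kills all three constants in every coordinate.
\end{enumerate}

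Also drop the final step ``shift $s$ so that the domain is $[0,\infty)\times[0,1]$''. Since $\td L_i(s+S,t)=\td L_i(s,t)+S(c_{i+1}-c_i)$, a shift reintroduces the constant you just removed. Moreover, with these $\rho_j$ the formula for $\phi_j$ on all of $[0,\infty)\times[0,1]$ generally runs into another puncture: for $x_1<x_2<x_3$ one has $\rho_1>|x_1-x_2|$. Work with ends $[S,\infty)\times[0,1]$ instead.

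\textbf{Uniqueness.} After identifying the marked discs, $\phi_i'(s,t)=\phi_i(s+\sigma_i,t)$ up to $O(e^{-\pi s})$. This changes $\td L_i$ by the \emph{constant} $\sigma_i(c_{i+1}-c_i)$, not by a linear term. So $\tdlm'-\tdlm$ is bounded with real boundary values. Reflection, removable singularities and Liouville then give $\tdlm'-\tdlm\equiv B\in\R^n$; no log terms survive.

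The asymptotic condition now only says $B=-\sigma_i(c_{i+1}-c_i)$ for $i=1,2,3$. It does not by itself kill $B$. You get $B=0$ (and $\sigma_i=0$ whenever $c_{i+1}\neq c_i$) precisely when $\bigcap_i\R(c_{i+1}-c_i)=\{0\}$, for example when $c_2-c_1$ and $c_3-c_2$ are linearly independent.

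If instead the three differences are nonzero and parallel, write $c_{i+1}-c_i=\lambda_i v$; this always happens for $n=1$ with distinct $c_i$. Then for every $\beta\in\R$, the map $\tdlm+\beta v$ with ends $\phi_i(s-\beta/\lambda_i,t)$ satisfies the same estimate. A disc with three boundary marked points has no nontrivial automorphisms, so these models are inequivalent and uniqueness fails.

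Your uniqueness argument therefore needs to invoke a non-collinearity hypothesis on $c_1,c_2,c_3$, which is not in the statement. Even then, the $\phi_i$ are unique only up to $O(e^{-\pi s})$ reparametrization of the ends.
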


\subsection{Fredholm Theory}

Let $0 < \delta < 1$ and $\dot\Sigma = \Sigma \setminus \tup{z}$. Define
$W^{1,p}_\delta(\tdlm^*T(\C^n; \td{\tup\Lambda}))$ to be the space of
$W^{1,p}_\delta$-sections $\xi$ of $\tdlm^*T\C^n$ satisfying:
\begin{enumerate}
  \item The $W^{1,p}_\delta$-norm is computed with respect to a fixed $j$-compatible Riemannian
  metric $g_{\dot\Sigma}$ that is cylindrical near each end of $\dot\Sigma$, with exponential
  weight $e^{\delta s}$.
  \item For each $z \in c_i$, the section $\xi(z)$ lies in $T_{\tdlm(z)}\td\Lambda_i$.
\end{enumerate}
Let $L^p_\delta(\wedge^{0,1}\dot\Sigma \otimes_\C \tdlm^*T\C^n)$ be the corresponding
$\delta$-weighted $L^p$-space. The linearized $\overline{\partial}$-operator is
\[
\overline{\partial} \colon
W^{1,p}_\delta\bigl(\tdlm^*T(\C^n; \td{\tup\Lambda})\bigr)
\to L^p_\delta\bigl(\wedge^{0,1}\dot\Sigma \otimes_\C \tdlm^*T\C^n\bigr).
\]

\begin{proposition}\label{prop: computation of cokernel}
The operator $\overline{\partial}$ is an injective Fredholm operator of index $-2n$. In
particular, $\ker\overline{\partial} = 0$ and $\dim\op{coker}\overline{\partial} = 2n$.
\end{proposition}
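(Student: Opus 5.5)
Since $\overline{\partial}$ on $\tdlm^*T\C^n = \dot\Sigma\times\C^n$ is the standard Cauchy--Riemann operator with constant (linear) boundary conditions $T\td\Lambda_i = \R^n$ on every arc, I will split the problem into $n$ identical copies of a scalar problem. Write $\xi = (\xi^1,\dots,\xi^n)$ with each $\xi^k\colon\dot\Sigma\to\C$ satisfying $\operatorname{Im}\xi^k = 0$ on $\partial\dot\Sigma$. It then suffices to show that the scalar operator
\[
\overline{\partial}\colon W^{1,p}_\delta(\dot\Sigma,\partial\dot\Sigma;\C,\R)\to L^p_\delta(\wedge^{0,1}\dot\Sigma)
\]
is Fredholm, injective, and of index $-2$. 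The total index is then $-2n$, the kernel vanishes, and $\dim\op{coker} = 2n$.

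\emph{Fredholmness and index.} On each strip-like end $\phi_i$ the boundary condition is $\R$ on both sides, so the asymptotic operator is $-\iu\,\frac{d}{dt}$ on $[0,1]$ with real boundary values. Its spectrum is $\pi\mathbb{Z}$, and the eigenvalue $0$ corresponds to the real constants. Since $0<\delta<1<\pi$, the weight $e^{\delta s}$ avoids the spectrum. The operator is therefore Fredholm by the standard theory of Cauchy--Riemann operators on punctured discs with nondegenerate weighted ends.

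For the index I would compare with the unweighted case. Removing $\R$-constant asymptotics at each end lowers the index by $\dim\ker$ of the asymptotic operator, namely $1$ per end. Equivalently, the operator with $\delta$ replaced by $-\delta$ has index $3$ larger. With the trivial totally real loop (Maslov index $0$) on the disc with three boundary punctures, the index with weight $-\delta$ (asymptotic constants allowed) is $1 + 0 = 1$, by the Riemann--Roch formula $n\chi(\Sigma)+\mu$ for a closed disc: the punctures can be filled in because the constants extend. The weighted index is then $1-3 = -2$.

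As a check, I will write it in terms of the spectral-flow formula: index $= \chi(\dot\Sigma)\cdot 1 + \mu - \sum_{\text{ends}}\tfrac12\dim\ker$-corrections, arranged so that it gives $-2$. I expect pinning down this index bookkeeping with the correct sign conventions to be the main obstacle. I would sanity-check it against the index-$1$ count obtained from the translation family $\R^n$ of local models.

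\emph{Injectivity.} Let $\overline{\partial}\xi^k = 0$ with $\xi^k$ real on the boundary and decaying like $e^{-\delta s}$ on every end. By Schwarz reflection across each boundary arc, $\xi^k$ extends to a holomorphic function near the punctures. Decay at the punctures gives removable singularities with value $0$ there. Hence $\operatorname{Im}\xi^k$ is a harmonic function on the closed disc that vanishes on the boundary, so it is identically $0$. Then $\xi^k$ is a real constant, and this constant is $0$ by the decay. So the kernel is trivial, and $\dim\op{coker} = -\op{ind} = 2n$.

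Alternatively, the cokernel can be identified directly as the kernel of the adjoint in $L^q_{-\delta}$: anti-holomorphic $(0,1)$-forms that are real in the appropriate sense on the boundary and may grow like constants times $\dd\bar z$ on the ends. Using Schwarz reflection to the double $\mathbb{CP}^1$, these are meromorphic differentials with at most simple poles at the three points and real residue data, which form a space of real dimension $3-1=2$ per coordinate. This confirms the index independently.
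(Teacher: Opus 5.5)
Your proposal is correct and follows the paper's proof essentially step for step. Fredholmness comes from the standard cylindrical-end theory, with the weight below the spectral gap $\pi$. The index is obtained by passing to a small negative weight, which gains $n$ per end; there the punctures fill in, and Riemann--Roch on the disc with Maslov index $0$ gives $n$, so the index at positive weight is $n-3n=-2n$. Injectivity follows from Schwarz reflection, removable singularities and the maximum principle for the imaginary part.

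The splitting into $n$ scalar problems and the meromorphic-differential count (which matches the paper's later explicit cokernel description in Proposition~\ref{prop:cokernel}) are harmless additions. You should drop the ``spectral-flow'' sanity check, though: as written its correction terms are tuned to produce $-2$ rather than derived.
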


\begin{proof}
The Fredholm property follows from the standard theory of elliptic operators on manifolds with
cylindrical ends (see \cite{robbin2001asymptotic} or Chapter~3 of \cite{donaldson2002floer}).
The weight $\delta > 0$ is chosen smaller than the spectral gap $\pi$ of the operator
$J_0\frac{\partial}{\partial t}$ on $W^{1,2}([0,1], \partial[0,1]; \C^n, \R^n)$.

Let $\xi \in \ker\overline{\partial}$. The totally real boundary conditions allow us to apply
the Schwarz reflection principle (cf.\ Chapter~13 of \cite{conway1995functions}) to extend
$\xi$ to a holomorphic function on $\C$ by setting $\xi(z) = \overline{\xi(1/\bar z)}$ outside
the open unit disk. Since $\xi \in W^{1,p}_\delta$, the Sobolev embedding
\[
  W^{1,p}([s-1, s+1] \times [0,1]) \hookrightarrow C^0([s-1, s+1] \times [0,1])
\]
gives $|\xi(s,t)| \leq C_B e^{-\delta s}\|\xi\|_{1,p,\delta}$, so $\xi$ decays exponentially
at each puncture. The punctures are thus removable, and $\xi$ extends to an entire holomorphic
function on $\C$.

Writing $\xi = u + \iu v$ with $u, v \colon \C \to \R^n$, the imaginary part $v$ is harmonic
on $\C$ and vanishes on $\partial\Sigma$. By the maximum principle, $v \equiv 0$, so $u$ is
constant. Thus $\ker\overline{\partial} = 0$.

We outline the Fredholm index calculation. The totally real boundary conditions along each
puncture agree at infinity. Changing the weight $\delta$ from a small positive to a small
negative number increases the index by $n$ per end, by the Atiyah--Singer index theorem, or Lockart-McOwen index change formula (\cite{lockhart1985elliptic}). For a
sufficiently small negative weight, a straightforward comparison of the asymptotics of the
kernel and cokernel elements shows that the index equals that of the same operator on the disk
without punctures, which is $n\chi(\Sigma) = n$ since the Maslov index is zero. Since there
are three ends, the index of $\overline{\partial}$ at positive weight is $n - 3n = -2n$.
\end{proof}

\begin{remark}
Lemma~6.1 of \cite{fukaya1997zeroloop} contains a related statement, but the operator
considered there is not Fredholm.
\end{remark}

Now we modify the Fredholm setup so that $\overline{\partial}$ becomes surjective. We introduce
special elements that fill the cokernel.

\begin{assumption}[Rigid]\label{assumption: rigid Y tree}
$\displaystyle\sum_{i=1}^3 \ind(\cP{p}_i) = 2n$.
\end{assumption}

\begin{assumption}[Transverse intersection]\label{assumption: transversal intersection}
The descending manifolds $\mathscr{D}_{\cP{p}_1}, \mathscr{D}_{\cP{p}_2}, \mathscr{D}_{\cP{p}_3}$ intersect
transversely at $m_0$, i.e.,
\[
\mathscr{D}_{\cP{p}_1} \times \mathscr{D}_{\cP{p}_2} \times \mathscr{D}_{\cP{p}_3}
\pitchfork_{M \times M \times M} \Delta_M
\]
at $(m_0, m_0, m_0)$, where $\Delta_M \subset M^3$ is the diagonal.
\end{assumption}
By Proposition~\ref{prop: transversality of Morse trees}, for generic data, Assumption~\ref{assumption: transversal intersection} can be met.

Let $\{e_i^1, \dots, e_i^{\ind(\cP{p}_i)}\}$ be an orthogonal basis of
$T_{m_0}\mathscr{D}_{\cP{p}_i}$, and set
$\tdlv = \tdlv_1 + \tdlv_2 + \tdlv_3$ where $\tdlv_i = \sum_{j=1}^{\ind(\cP{p}_i)}\alpha_V^i \varsigma_i^j e_i^j$ with
$\varsigma_i^j \in \R$, and $\alpha_V^i$ is the bump
function on the $i$-th end of $\dot\Sigma$ such that it equals $1$ when $s \geq 2$ and it equals $0$ when $s \leq 1$  as  illustrated in
Figure~\ref{fig: cutoff functions}.
 Under Assumptions~\ref{assumption: rigid Y tree}
and~\ref{assumption: transversal intersection},
\[
T_{m_0}\mathscr{D}_{\cP{p}_1} \cap T_{m_0}\mathscr{D}_{\cP{p}_2} \cap T_{m_0}\mathscr{D}_{\cP{p}_3} = \{0\}.
\]

\begin{proposition}\label{prop: add elements to fill cokernel}
The map
\begin{align*}
\overline{\partial} \colon
W^{1,p}_\delta\bigl(\tdlm^* T(\C^n; \td{\tup{\Lambda}})\bigr)
\oplus \bigoplus_{i=1}^3 \R^{\ind(\cP{p}_i)}
&\to L^p_\delta(\wedge^{0,1}\dot\Sigma \otimes_\C \tdlm^* T\C^n), \\
(\xi,\, \varsigma_1^1, \dots, \varsigma_1^{\ind(\cP{p}_1)},\,
\varsigma_2^1, \dots, \varsigma_2^{\ind(\cP{p}_2)},\,
\varsigma_3^1, \dots, \varsigma_3^{\ind(\cP{p}_3)})
&\mapsto \overline{\partial}\xi + \overline{\partial}\tdlv
\end{align*}
is an isomorphism.
\end{proposition}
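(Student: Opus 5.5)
Since the augmented map is a finite-dimensional extension of a Fredholm operator, I would get surjectivity for free from an index count and spend the actual work on injectivity. Specifically, I would compute the index of the augmented map to be zero, then show its kernel is trivial using the same reflection and maximum-principle argument as in Proposition~\ref{prop: computation of cokernel}, with transversality at $m_0$ supplying the extra input.

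\emph{Setup and index.} I identify $T_{m_0}M$ with $\R^n\subset\C^n$, so each $e_i^j$ is a real vector. Then $\tdlv$ is a section of $\tdlm^*T\C^n$ taking values in $\R^n$. This is tangent to every $\td\Lambda_i=\R^n+\iu c_i$, so $\tdlv$ satisfies the boundary conditions. Since the $e_i^j$ are constant, $\overline{\partial}\tdlv=\sum_{i,j}\varsigma_i^j\,(\overline{\partial}\alpha_V^i)\,e_i^j$, which is smooth and supported in $\{1\le s\le 2\}$ on the ends. Hence it lies in $L^p_\delta$, and the augmented map is well defined and bounded. It is the sum of the Fredholm operator $\overline{\partial}$ and a map from a finite-dimensional space, so it is Fredholm. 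By Proposition~\ref{prop: computation of cokernel} and Assumption~\ref{assumption: rigid Y tree}, its index is $-2n+\sum_i\ind(\cP{p}_i)=0$. It therefore suffices to prove injectivity.

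\emph{Injectivity.} Suppose $\overline{\partial}(\xi+\tdlv)=0$ and set $\eta=\xi+\tdlv$. Then $\eta$ is holomorphic on $\dot\Sigma$, satisfies $\eta(c_i)\subset\R^n$ (all boundary conditions are translates of $\R^n$), and is bounded. Indeed, $\xi$ decays like $e^{-\delta s}$ by the Sobolev estimate used in Proposition~\ref{prop: computation of cokernel}, while on the $i$-th end, for $s\ge 2$, $\tdlv$ equals the constant $v_i:=\sum_j\varsigma_i^je_i^j$.

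I then argue as in Proposition~\ref{prop: computation of cokernel}. Schwarz reflection across the arcs extends $\eta$ to a holomorphic function on $\C$ minus three points. Since it is bounded near these points, they are removable singularities, and $\eta$ becomes a bounded entire function, hence constant. (Alternatively, $\operatorname{Im}\eta$ is harmonic and vanishes on $\partial\Sigma$, so the maximum principle gives $\operatorname{Im}\eta=0$ and $\eta$ is a real constant $c$.) Taking $s\to\infty$ on the $i$-th end, where $\xi\to0$, gives $c=v_i\in T_{m_0}\mathscr{D}_{\cP{p}_i}$ for each $i=1,2,3$. By Assumption~\ref{assumption: transversal intersection}, $c\in\bigcap_i T_{m_0}\mathscr{D}_{\cP{p}_i}=\{0\}$, so every $v_i=0$. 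Because $\{e_i^j\}_j$ is a basis, all $\varsigma_i^j=0$. Then $\tdlv=0$ and $\overline{\partial}\xi=0$, so $\xi=0$ by Proposition~\ref{prop: computation of cokernel}.

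\emph{Main obstacle.} The delicate step is the removable-singularity argument at the punctures once $\eta$ no longer decays and only tends to the constant $v_i$. I would handle this by noting that on the strip coordinates $\eta$ is bounded and continuous up to the boundary. After reflection, a neighbourhood of each puncture becomes a punctured disc on which $\eta$ is bounded and holomorphic, so Riemann's theorem applies. The limit value there is exactly $v_i$, because the $e^{-\delta s}$ decay of $\xi$ is uniform in $t$.
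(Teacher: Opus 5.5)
Your proposal is correct and is essentially the paper's argument. The paper proves that the elements $\overline{\partial}(\alpha_V^i e_i^j)$ are linearly independent modulo $\op{im}\overline{\partial}$ and then invokes $\dim\op{coker}\overline{\partial}=2n$, using the same core step you use (a bounded holomorphic section with real boundary values is a real constant whose limit on each end lies in $T_{m_0}\mathscr{D}_{\cP{p}_i}$), whereas you package it as injectivity plus an index-zero count; note that the vanishing of $\bigcap_i T_{m_0}\mathscr{D}_{\cP{p}_i}$ needs the rigidity Assumption~\ref{assumption: rigid Y tree} together with transversality, not transversality alone.
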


\begin{proof}
We first show that
\[
\frac{\dd \alpha_E^1}{\dd s}e_1^1, \dots, \frac{\dd \alpha_E^1}{\dd s} e_1^{\ind(\cP{p}_1)},\;
\frac{\dd \alpha_E^2}{\dd s}e_2^1, \dots, \frac{\dd \alpha_E^2}{\dd s} e_2^{\ind(\cP{p}_2)},\;
\frac{\dd \alpha_E^3}{\dd s}e_3^1, \dots, \frac{\dd \alpha_E^3}{\dd s} e_3^{\ind(\cP{p}_3)}
\]
are linearly independent in the quotient
\[
L^p_\delta(\wedge^{0,1}\dot\Sigma \otimes_\C \tdlm^*T\C^n)
\big/ \overline{\partial}\bigl[W^{1,p}_\delta(\tdlm^*(T(\C^n; \td{\tup{\Lambda}})))\bigr].
\]
Note that $\overline{\partial}(\beta_i e_i^j) = \beta'_i e_i^j \otimes (\dd s + \iu\,\dd t)$.
Suppose constants $\varpi_i^j$ satisfy
\[
\sum_{i,j} \varpi_i^j \beta'_i e_i^j
\in \overline{\partial}\bigl[W^{1,p}_\delta(\tdlm^*(T(\C^n; \td{\tup{\Lambda}})))\bigr].
\]
Then there exists
$\xi' \in W^{1,p}_\delta(\tdlm^*(T(\C^n; \td{\tup{\Lambda}})))$
with $\overline{\partial}\xi' = \sum_{i,j}\varpi_i^j \beta'_i e_i^j$.
Set $\td{\xi} = \sum_{i,j}\varpi_i^j \beta_i e_i^j$, so that
$\overline{\partial}(\td{\xi} - \xi') = 0$. Since both $\td{\xi}$ and $\xi'$ are bounded and
$\td{\xi} - \xi'$ satisfies the totally real boundary condition on $\partial\dot\Sigma$, the
maximum principle gives a constant vector $e_0 \in \R^n$ with $\td{\xi} - \xi' \equiv e_0$.
Since $\xi'$ decays exponentially on each end and
$T_{m_0}\mathscr{D}_{\cP{p}_1} \cap T_{m_0}\mathscr{D}_{\cP{p}_2} \cap T_{m_0}\mathscr{D}_{\cP{p}_3} = \{0\}$,
we conclude $e_0 = 0$ and hence $\varpi_i^j = 0$ for all $i, j$.

Since $\dim\op{coker}\overline{\partial} = 2n = \sum_i\ind(\cP{p}_i)$, these elements span the
cokernel, and the extended operator is an isomorphism.
\end{proof}

We also give an explicit description of the cokernel elements, which is used in the gluing
construction. Model the three-punctured disk $\dot\Sigma$ as the \emph{slit domain}
$D = \R \times [-1,1]$ with a cut along $[0,\infty) \times \{0\}$. The three strip-like ends
correspond to $s \to +\infty$ on the upper and lower sides of the cut and $s \to -\infty$. (We can also write down the cokernel elements explicitly in the 3-punctured disk domain, but the expressions are not as simple as on the slit domain).   

The formal adjoint of $\overline{\partial}$ is
$\overline{\partial}^* = -\partial_s + \iu\,\partial_t$, acting as
\[
\overline{\partial}^* \colon
W^{1,q}_{-\delta}(\dot\Sigma; \wedge^{0,1} \otimes \C^n)
\to L^q_{-\delta}(\dot\Sigma; \C^n),
\]
where $1/p + 1/q = 1$. The cokernel of $\overline{\partial}$ is canonically identified with
$\ker(\overline{\partial}^*) \subset L^q_{-\delta}$, with totally real boundary conditions (\cite{fukaya1997zeroloop}, or Appendix C in \cite{mcduff2012j}) on the three boundary lines of the slit, which has real dimension $2n$ by the Fredholm index calculation and $\ker \overline{\p} = \{0\}$.

\begin{proposition}[Explicit generators of $\ker\overline{\partial}^*$]\label{prop:cokernel}
For $0 < \delta < \pi$, a basis of $\ker(\overline{\partial}^*) \subset L^q_{-\delta}$ is given
by the following two families.

\medskip\noindent\textbf{Type~1} (constant modes):
\[
\rho^{(1)} = v \otimes (\dd s - \iu\,\dd t),
\]
where $v \in \R^n \subset \C^n$ is a constant vector.

\medskip\noindent\textbf{Type~2} (meromorphic modes):
\[
\rho^{(2)} = v \otimes \frac{e^{-\pi(s-\iu t)}}{1 - e^{-\pi(s-\iu t)}}
(\dd s - \iu\,\dd t),
\]
where $v \in \R^n \subset \C^n$ is a constant vector.
\end{proposition}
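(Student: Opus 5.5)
The plan is to check directly that each listed form lies in $\ker(\overline{\partial}^*)\subset L^q_{-\delta}$ with the dual boundary conditions, that the $2n$ forms are linearly independent, and then to conclude by dimension count. By Proposition~\ref{prop: computation of cokernel}, $\ker\overline{\partial}=0$ and the index is $-2n$, so this kernel has real dimension exactly $2n$. All tangent spaces $T\td\Lambda_i$ equal $\R^n\subset\C^n$. Hence the dual boundary condition for a $(0,1)$-form $\rho=h\,(\dd s-\iu\,\dd t)$, along each boundary line of the slit domain (namely $t=\pm1$ and the two sides of the cut $t=0$, $s\ge 0$), is that $\langle \xi,h\rangle_{\R}$ has vanishing boundary contribution for all $\xi$ with $\xi|_{\partial}\in\R^n$. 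Concretely, this is the requirement $h|_{\partial}\in\iu\R^n$ or $h|_\partial\in\R^n$ according to the sign conventions of the integration by parts; I will fix this convention once, using the Type~1 mode as the test case.

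\textbf{Step 1: the equation and boundary values.} Both families have the form $\rho=v\otimes g(\bar z)\,\dd\bar z$ with $z=s+\iu t$ and $g$ a function of $\bar z$ alone. Such a $g$ is annihilated by $-\partial_s+\iu\partial_t$ (which is $-2\partial_z$), so $\overline{\partial}^*\rho=0$ away from singularities. For Type~2, $g(\bar z)=e^{-\pi\bar z}/(1-e^{-\pi\bar z})$. On $t=0$ and on $t=\pm1$ we have $e^{-\pi\bar z}=e^{-\pi s}e^{\pm\iu\pi t}\in\R$, so $g$ is real there, exactly like the constant $g\equiv1$ of Type~1. Therefore both families satisfy the same boundary condition, and it is the correct one because the Type~1 mode pairs trivially with $\overline{\partial}\xi$ on a strip, by Stokes' theorem and $\xi|_\partial\in\R^n$.

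\textbf{Step 2: membership in $L^q_{-\delta}$.} The poles of $g$ occur where $\bar z\in 2\iu\mathbb Z$. The only one in the closed domain is $z=0$, the tip of the slit, where $g\sim 1/(\pi\bar z)$. Since $p>2$ we have $q<2$, so $|z|^{-1}\in L^q_{\mathrm{loc}}$. As $s\to+\infty$ on either side of the cut, $g$ decays like $e^{-\pi s}$. As $s\to-\infty$, $g\to -1$. A constant is admissible in $L^q_{-\delta}$ because the weight $e^{-\delta|s|}$ makes bounded functions integrable.

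\textbf{Step 3: the slit tip, which I expect to be the main obstacle.} Pointwise $\overline{\partial}^*\rho=0$ is not enough; I must show $\int\langle\overline{\partial}\xi,\rho\rangle=0$ for every $\xi\in W^{1,p}_\delta$ satisfying the boundary condition, including near $z=0$. I would excise a disc $B_r(0)$ and integrate by parts. The outer boundary terms vanish by Step 1, and the ends contribute nothing by the weights. What remains is $\operatorname{Re}\oint_{|z|=r}\langle\xi,v\rangle g(\bar z)\,\dd\bar z$ up to convention. Since $\xi\in W^{1,p}\subset C^0$ with $\xi(0)\in\R^n$, this is $\langle\xi(0),v\rangle\operatorname{Re}\bigl(\frac1\pi\oint \dd\bar z/\bar z\bigr)+o(1)$. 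Here $\oint\dd\bar z/\bar z=-2\pi\iu$ over the full circle, because the slit is crossed but $\xi$ is continuous across the tip. The real part is therefore $0$, and the term vanishes as $r\to0$. The delicate point is bookkeeping the real inner product versus the Hermitian pairing, so that the residue lands in the imaginary part. If it instead landed in the real part, I would replace $v$ by $\iu v$ and recheck Step 1.

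\textbf{Step 4: independence.} Suppose $\sum v\,$(Type 1)$+\sum w\,$(Type 2)$=0$. The singularity at $0$ forces $w=0$, and then $v=0$. This gives $2n$ independent elements of a $2n$-dimensional space, hence a basis.
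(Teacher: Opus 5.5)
Your Step~3 is where the argument breaks, and it cannot be repaired: the residue at the slit tip lands in the real part, not the imaginary part. Put $f(z)=1/(e^{\pi z}-1)$, so that $\overline{f(\bar z)}=f(z)$, and set $F:=\bigl(\sum_k \xi_k v_k\bigr) f(z)$. Because $f(z)$ is holomorphic, the pairing integrand equals $\operatorname{Re}\bigl((\partial_s+\iu\partial_t)F\bigr)$, and the divergence theorem gives
\[
\int_{D_r}\langle \overline{\partial}\xi,\, v f(\bar z)\rangle\,\dd s\,\dd t=\operatorname{Re}\Bigl(-\iu\oint_{\partial D_r}F\,\dd z\Bigr).
\]
Your boundary term is missing the factor $-\iu$. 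That factor is exactly what kills the straight edges: there $\xi\in\R^n$, $f$ is real and $\dd z=\pm\dd s$, so $F\,\dd z$ is real. On the small circle, however, the circle is traversed clockwise through the full angle $2\pi$ and $F\approx\langle\xi(0),v\rangle/(\pi z)$. The contribution therefore tends to $\operatorname{Re}\bigl(-\iu\cdot\tfrac1\pi\cdot(-2\pi\iu)\bigr)\langle\xi(0),v\rangle=-2\langle\xi(0),v\rangle$, with $\xi(0)\in\R^n$ arbitrary.

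Concretely, take $\xi=\chi(|z|)\,a$ with $a\in\R^n$ and $\chi$ a cutoff equal to $1$ near the tip. It is admissible in $W^{1,p}_\delta$, since it is smooth in the uniformizing coordinate $\sqrt z$ and real on the boundary. For it, $\langle\overline{\partial}\xi,\rho^{(2)}\rangle=-2\langle a,v\rangle\neq 0$ in the coefficient pairing above. Your fallback $v\mapsto\iu v$ does not help: the form becomes imaginary on $t=0,\pm1$, and the straight-edge terms stop vanishing. Invariantly, in $w=\sqrt z$ one has $\rho^{(2)}\approx\frac{2v}{\pi\bar w}\,\dd\bar w$. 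This is a simple pole at a smooth boundary point, so $\overline{\partial}^*\rho^{(2)}$ carries a boundary delta. Hence the Type~2 forms are not in the cokernel, and the proposition is false as stated.

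The paper's proof has the same outline as yours minus Step~3. It checks $\overline{\partial}^*\rho^{(2)}=0$ only away from the pole, together with reality on $t=-1,0,1$, so it shares exactly this blind spot. You located the crux correctly but dismissed it because of the missing $\iu$. What does hold is that Type~1 is a genuine cokernel element, and a correct second family must be regular at the tip in the coordinate $\sqrt z$. For instance, take $\bar z=s-\iu t$ and the principal branch, whose cut is precisely the slit:
\[
v\otimes \iu\, e^{\pi\bar z/2}\bigl(1-e^{\pi\bar z}\bigr)^{-1/2}(\dd s-\iu\,\dd t),\qquad v\in\R^n.
\]
This form is annihilated by $-\partial_s+\iu\partial_t$ and is real on $t=0,\pm1$. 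It tends to $\pm v$ on the two right ends and decays like $e^{\pi s/2}$ on the left end. At the tip it is only $O(|z|^{-1/2})$, so the circle term vanishes. With this family in place of Type~2, your Steps~1, 2 and~4 and the dimension count go through.
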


On the ends going to $+\infty$, the factor
\[
\frac{e^{-\pi(s-\iu t)}}{1-e^{-\pi(s-\iu t)}}
\]
decays like $e^{-\pi s}$. On the end going to $-\infty$, set
$z=s-\iu t$. Since $|e^{\pi z}|<1$ on this end, the Fourier expansion gives
\[
\frac{e^{-\pi z}}{1-e^{-\pi z}}
=
\frac{1}{e^{\pi z}-1}
=
-\frac{1}{1-e^{\pi z}}
=
-1-\sum_{m=1}^\infty e^{m\pi z}.
\]
Thus the Type~2 mode approaches the constant mode $-1$, with exponentially
decaying corrections of order $e^{\pi s}$ as $s\to-\infty$.

\begin{proof}
For Type~1,
\[
\overline{\partial}^*(v \otimes (\dd s - \iu\,\dd t))
=
(-\partial_s + \iu\,\partial_t)v
=0,
\]
since $v$ is constant. Using the positive cylindrical coordinate on each end,
constants lie in the negatively weighted space $L^q_{-\delta}$, so
$\rho^{(1)} \in L^q_{-\delta}$.

For Type~2, write
\[
f(z)=\frac{e^{-\pi z}}{1-e^{-\pi z}}, \qquad z=s-\iu t.
\]
Away from its pole, $f$ is holomorphic as a function of $z$, and hence
\[
(-\partial_s+\iu\,\partial_t)f
=-(\partial_s-\iu\,\partial_t)f
=0.
\]
Thus $\overline{\partial}^*\rho^{(2)}=0$. On the two ends with
$s\to+\infty$, $f=O(e^{-\pi s})$. On the end with $s\to-\infty$, the
expansion above gives $f=-1+O(e^{\pi s})$. Equivalently, in the positive
cylindrical coordinate $\sigma=-s$ on this end,
$f=-1+O(e^{-\pi\sigma})$. Hence $\rho^{(2)} \in L^q_{-\delta}$ for
$0<\delta<\pi$.

Both types of forms take real values on tangent vectors along the boundary
lines of the slit, where $t=-1,0,1$, so they satisfy the totally real boundary
conditions.

Linear independence follows from the distinct asymptotic behaviors of the two
types, and together the $2n$ forms match
$\dim\op{coker}\overline{\partial}=2n$.
\end{proof}

\begin{remark}
Type 1 and type 2 modes can be transformed to each other under conformal transforms of the slit model permuting the three cylindrical ends.    
\end{remark}
\section{Pre-gluing}\label{section: pre-gluing}

In this section we adopt the gluing profile of Hutchings--Taubes \cite{hutchings2009gluing}.
Let $\tup{\chi}$ be a Morse flow Y-tree with $k = 3$, and let $m_0 \in M$ denote the common
vertex
\[
m_0 = \chi_i(0), \qquad i = 1, 2, 3.
\]
This determines edge models $\tup{w}$ and a local model $\tdlm$. Gluing these together produces
an approximate solution. On the edge side, the gluing region lies near the boundary segment
$\{0\} \times [0,1] \subset [0,\infty) \times [0,1]$ (to be specified precisely below), and on
the local model side it lies near the ends of $\dot\Sigma$.

\subsection{Local Coordinates}

\begin{assumption}\label{assumption: flat}
The Riemannian metric $g$ on $M$ is flat near $m_0$.
\end{assumption}
This implies that the almost complex structure on $X$ is integrable near $x_0 = (m_0, 0)$.
We choose a coordinate $(u^1, u^2, \dots, u^n)$ of $M$ centered at $m_0$ with respect to which $g_{ij} = \delta_{ij}$. Different choices of coordinates differ by an $O(n)$ change of variables. 
Let $\varphi = (u^1, u^2, \dots, u^n, v^1, v^2, \dots, v^n): U \subset X \to \R^{2n}$ be the associated coordinate of $X$.
We identify $U$ with an open subset of $\C^n$ by $$x \mapsto \epsilon^{-1} \varphi(x),$$
sending $x_0$ to the origin, $M$ to $\R^n$, and $T^*_{m_0} M$ to $\iu \R^n$.
\begin{assumption}\label{assumption: linear}
We assume with respect to the coordinates $(u^1, u^2, \dots, u^n)$, the Morse functions $f_1, f_2, f_3$ are linear, i.e., does not contain terms of order greater or equal to two, in this neighborhood of $m_0$.
\end{assumption}
Assumption~\ref{assumption: linear} implies that the Lagrangians $\Lambda_1^\epsilon \cap U, \Lambda_2^\epsilon \cap U, \Lambda_3^\epsilon \cap U$ are mapped into $\iu \tup{c}_1 + \R^n, \iu \tup{c}_2 + \R^n, \iu \tup{c}_3 + \R^n \subset \C^n$,
where $$\tup{c}_i = (\frac{\p f_i}{\p u^1}(0), \frac{\p f_i}{\p u^2}(0), \dots, \frac{\p f_i}{\p u^n}(0)).$$

Let $\tdlm$ denote the local model from Lemma~\ref{lemma: local model asymptotic property} with the above choice of $\tup{c}_i$.


\subsection{Choice of Constants}
Let $0 < \delta < \frac{1}{2}$ be the decay rate in the weighted Sobolev spaces.
Recall that $p > 2$ is the Sobolev exponent, and we choose $p$ large so that 
\begin{equation}\label{eq: p}
p > 4.
\end{equation}
We choose $\sigma$ satisfying
\[
\frac{2}{p} < \sigma < 1 - \frac{2}{p}.
\]

Set
\begin{align}
s_0 &= \frac{p}{\delta}\left(1 - \frac{2}{p} - \sigma\right)\ln\epsilon^{-1} > 0,
\label{eq:s0}\\
s_1 &= \frac{p}{\delta}\left(1 - \frac{2}{p} + \sigma\right)\ln\epsilon^{-1} > 0,
\label{eq:s1}
\end{align}
and choose $h = \frac{1}{4}s_0$.

Let $w_i$ be defined by \eqref{eqn: w_i}, and through the chart $U$, we view $$w_i: [0, \epsilon^{-1}) \times [0,1] \to \C^n.$$
\begin{lemma}\label{lemma: asymptotics of w_i}
There exists a constant $C$ independent of $\epsilon$ such that
\begin{equation}
|\td{w}_i(s,t) - \td{L}_i(s,t)|_{\C^n} \leq C\epsilon(s^2 + 1)
\end{equation}
for all $0 \leq s \leq \epsilon^{-1}$.
\end{lemma}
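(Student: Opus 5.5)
We compute $\td w_i$ explicitly in the chart $U$ and Taylor expand; the estimate then has two sources, the $\epsilon$-scaled Taylor expansion of the flow line and a Taylor remainder.

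\emph{Horizontal part.} By Assumption~\ref{assumption: flat}, $g_{ij}=\delta_{ij}$ near $m_0$. By Assumption~\ref{assumption: linear}, $f_i-f_{i-1}$ is linear there with gradient $\tup c_i-\tup c_{i-1}$. The convention fixed in Section~\ref{section: edge model}, with the reversed direction of $s$, must be matched with the ordering in $\td L_i$; we arrange the indexing so that the rescaled flow line is, in coordinates,
\[
u(\chi_i(\epsilon s)) = \epsilon s\,(\tup c_{i+1}-\tup c_i) + O\bigl(\epsilon^2 s^2\bigr),
\]
for $0\le s\le \epsilon^{-1}$, at least while the flow stays in the chart. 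If the functions are exactly linear and the metric exactly flat on the whole chart, the error term vanishes identically. In general it is at worst $C\epsilon^2 s^2$ by Taylor's theorem with $C$ a bound on the second derivative of $\chi_i$, which is uniform because $\chi_i$ is a fixed smooth curve. After dividing by $\epsilon$ (the chart map is $\epsilon^{-1}\varphi$), this gives a real part $s(\tup c_{i+1}-\tup c_i)+O(\epsilon s^2)$.

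\emph{Fiber part.} The Hamiltonian flow of $\epsilon f$ on $T^*M$ is fiberwise translation by $\epsilon\,\dd f$ when $f$ is linear in flat coordinates, and it moves the base point not at all. Hence $\Phi_i(t)$ translates the fiber coordinate by $\epsilon\bigl((1-t)\tup c_{i}+t\,\tup c_{i\pm1}\bigr)$, with the index fixed by the same convention. Rescaled by $\epsilon^{-1}$, this is $\iu\bigl(\tup c_i + t(\tup c_{i+1}-\tup c_i)\bigr)$. Adding the two pieces gives exactly $\td L_i(s,t)=(s+\iu t)(\tup c_{i+1}-\tup c_i)+\iu\tup c_i$, plus the remainder $O(\epsilon s^2)$. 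We absorb the $+1$ in $C\epsilon(s^2+1)$ from a possible $O(\epsilon)$ discrepancy in identifying the starting point, or from the second-order terms in the Hamiltonian flow.

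\emph{Main obstacle.} Two points need care. The first is the sign and index bookkeeping between $\Phi_i$, the boundary conditions $w_i(\cdot,0)\subset\Lambda_i^\epsilon$ and $w_i(\cdot,1)\subset\Lambda^\epsilon_{i-1}$, and the slope in $\td L_i$. The second is the range $s\le\epsilon^{-1}$: there $\epsilon s\le 1$, so $\chi_i(\epsilon s)$ may leave the region where the chart and the two assumptions hold. If it leaves, we shrink the range to $s\le c\,\epsilon^{-1}$ for a fixed $c>0$ depending only on the chart size. For the remaining $s$, both sides are $O(\epsilon^{-1})$ in $\C^n$, which is still $\le C\epsilon s^2$ since $\epsilon s^2\gtrsim \epsilon^{-1}$ there. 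So the bound holds on all of $[0,\epsilon^{-1}]$ after enlarging $C$, uniformly in $\epsilon$.
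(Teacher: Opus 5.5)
Your argument is correct once the convention fix you flag is made, but it is organized differently from the paper's.

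\textbf{The two routes.} The paper never computes $w_i$. It treats $F(\epsilon,s,t)=\epsilon^{-1}\Phi_i(t)\chi_i(\epsilon s)-\td{L}_i(s,t)$ as a function of three variables, asserts that $F$ and its first partials vanish at the origin and that its second partials are $O(\epsilon)$, and Taylor expands. You instead use that $\varphi^{\epsilon f}_t$ is a pure fiber translation. So $\td{w}_i$ splits into $\epsilon^{-1}u(\chi_i(\epsilon s))$ plus $\iu$ times a $t$-convex combination of $\dd f_i,\dd f_{i-1}$ at $\chi_i(\epsilon s)$. Under Assumptions~\ref{assumption: flat} and~\ref{assumption: linear} this gives $\td{w}_i=\td{L}_i$ exactly while $\chi_i(\epsilon s)$ stays in the good neighborhood.

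\textbf{What your route buys.} It gives a sharper conclusion: exact agreement on the whole gluing region $s\le s_1+2h=O(\ln\epsilon^{-1})$, which is all the pre-gluing uses. It also makes the conventions visible, and that matters. As literally written, $\td{L}_i(s,1)\in\td\Lambda_{i+1}$ but $w_i(s,1)\in\Lambda_{i-1}^\epsilon$. Hence $\partial_tF(0,0,0)=\iu(\tup{c}_{i-1}-\tup{c}_{i+1})\neq0$, and the paper's proof silently presupposes your re-indexing.

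The sign, however, is not a free choice. The map $(s+\iu t)a+\iu b$ is holomorphic only when the $s$-slope of the real part equals the $t$-slope of the imaginary part. With $\Lambda_i^\epsilon$ at $t=0$ and $\Lambda_{i-1}^\epsilon$ at $t=1$, the edge must therefore satisfy $\frac{\dd}{\dd s}\chi_i=-\grad(f_i-f_{i-1})$. That is the direction used in the introduction, not the reversed one of Section~\ref{section: edge model}; Lemma~\ref{lemma: dbar J estimate for edge model} needs the same. No relabeling of $\td{L}_i$ repairs a wrong sign.

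\textbf{What the paper's route buys.} It is a pure smoothness argument that does not use the two assumptions, which matters because flatness is to be dropped later. Made precise, it is your ``in general'' remark carried out at fixed $\epsilon$ in the variable $s$ alone. A joint expansion in $\epsilon$ is loose: for $g=u\circ\chi_i$, $\partial_\epsilon^2\bigl(\epsilon^{-1}g(\epsilon s)\bigr)$ is $O(s^3)$ rather than $O(\epsilon)$ unless $g$ is linear. The fixed-$\epsilon$ expansion gives a horizontal error $C\epsilon s^2$ and a vertical error $C\epsilon s$, the latter from the variation of $\dd f_i,\dd f_{i-1}$ along $\chi_i$.

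\textbf{Two small corrections to your write-up.} First, that vertical term is the real source of the $+1$, not second-order terms of the Hamiltonian flow. There are none: $\varphi^{\epsilon f}_t$ is translation by $t\epsilon\,\dd f$ for every $f$, linear or not. Second, your crude bound on $[c\epsilon^{-1},\epsilon^{-1}]$ presupposes that the coordinates $\varphi$ are defined along $\chi_i([0,1])$ even where the assumptions fail. If they are, the fixed-$\epsilon$ Taylor argument already covers the whole range and no splitting is needed. If they are not, $\td{w}_i$ is undefined there and the lemma's range must be shortened anyway.
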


\begin{proof}
Consider $F(\epsilon, s, t) = \epsilon^{-1}\Phi_i(t)\chi_i(\epsilon s) - \td{L}_i(s,t)$. Then
$F(0,0,0) = 0$, $\frac{\p F}{\p\epsilon}(0,0,0) = 0$, and
$\frac{\p F}{\p s}(0,0,0) = \frac{\p F}{\p t}(0,0,0) = 0$. All second-order partial
derivatives of $F$ are bounded by $C\epsilon$ for some constant $C$, and the lemma follows
from the Taylor expansion of $F$ at $(0,0,0)$.
\end{proof}

\subsection{Cutoff Functions}

Define a cutoff function $\beta_E \colon \dot\Sigma \to [0,1]$ as follows. On the $i$-th end
$S_i  = [0, \infty) \times [0,1]$, require:
\begin{enumerate}
\item $\beta_E(s) = 0$ for $s \leq s_0 - 2h$;
\item $\beta_E(s) = 1$ for $s \geq s_0$;
\item $\left|\frac{\dd\beta_E}{\dd s}(s)\right| < \frac{1}{h}$;
\item $\left|\frac{\dd\beta_E}{\dd s}(s)\right| = \frac{1}{2h}$ for all $s_0-\frac{3h}{2} \leq s \leq s_0 - \frac{h}{2}$.
\end{enumerate}
Extend $\beta_E$ by constants on the remainder of $\dot\Sigma$.

Similarly, define $\beta_V \colon \dot\Sigma \to \R$ by requiring on each end $S_i$:
\begin{enumerate}
\item $\beta_V(s) = 0$ for $s \geq s_1 + 2h$;
\item $\beta_V(s) = 1$ for $s \leq s_1$;
\item $\left|\frac{\dd\beta_V}{\dd s}(s)\right| < \frac{1}{h}$;
\item $\left|\frac{\dd\beta_V}{\dd s}(s)\right| = \frac{1}{2h}$ for all $s_1 + \frac{h}{2} \leq s \leq s_1 + \frac{3h}{2}$.
\end{enumerate}
In particular, this implies 
\[
\frac{1}{2} h^{\frac{1}{p} - 1}\leq \left(\int_{\dot\Sigma}\left| \frac{d \beta_E}{d s} \right|^p \right)^{1/p} \leq 2^{\frac{1}{p}}h^{\frac{1}{p}-1} 
\]
and
\[
\frac{1}{2} h^{\frac{1}{p} - 1}\leq \left(\int_{\dot\Sigma}\left| \frac{d \beta_V}{d s} \right|^p \right)^{1/p} \leq 2^{\frac{1}{p}}h^{\frac{1}{p}-1}.
\]

\begin{figure}
\centering
\def\h{2}
\def\hh{1}
\def\ss{0.3}
\def\hhh{0.1}
\def\hhhh{0.2}
\begin{tikzpicture}[scale = 0.8, every node/.style={scale=0.6}]
\draw[->] (0, 0) -- (15,0);
\filldraw[black] (0,0)  circle (1pt) node[anchor = north]{$0$};
\filldraw[black] (1,0)  circle (1pt) node[anchor = north]{$1$};
\filldraw[black] (2,0)  circle (1pt) node[anchor = north]{$2$};
\filldraw[black] (4,0)  circle (1pt) node[anchor = north]{$s_0-2h$};
\filldraw[black] (6,0)  circle (1pt) node[anchor = north]{$s_0$};
\filldraw[black] (8,0)  circle (1pt) node[anchor = north]{$s_1$};
\filldraw[black] (10,0)  circle (1pt) node[anchor = north]{$s_1+2h$};
\filldraw[black] (11.8,0)  circle (1pt) node[anchor = north]{$\epsilon^{-1}-1$};
\filldraw[black] (13.2,0)  circle (1pt) node[anchor = north]{$\epsilon^{-1}$};
\draw[green] (11,\h) -- (12, \h) .. controls (12 + \ss, \h) and (13 - \ss, \h - \hh) .. (13, \h - \hh) -- (14, \h - \hh);
\draw[red] (4,\h) -- (8, \h) .. controls (8 + \ss, \h) and (10 - \ss, \h - \hh) .. (10, \h - \hh) -- (11, \h - \hh);
\draw[blue] (2, \h -\hh -\hhh) -- (4, \h - \hh-\hhh) .. controls (4 + \ss, \h - \hh-\hhh) and (6 - \ss, \h -\hhh).. (6, \h-\hhh) -- (10, \h-\hhh);
\draw[] (0,\h - \hh - \hhhh ) -- (1, \h -\hh - \hhhh) .. controls (1 + \ss, \h - \hh - \hhhh) and (2 - \ss, \h  -\hhhh) .. (2, \h  -\hhhh) -- (3, \h  -\hhhh);
\node at (10.2, \h){$\beta_E$};
\node at (11.2, \h - \hh){$\beta_V$};
\node at (3.2,\h - \hhhh){$\alpha_V^i$};
\node at (13,\h  -\hhhh - \hhhh){$\alpha_E^i$};
\end{tikzpicture}
\caption{Cutoff functions on the cylindrical ends.}
\label{fig: cutoff functions}
\end{figure}

\subsection{Approximate Solution with Variations}

Let $S_i = [0,\infty) \times [0,1]$ denote the $i$-th end of $\dot\Sigma$, with coordinates
$(s,t)$.

By Assumptions~\ref{assumption: rigid Y tree} and~\ref{assumption: transversal intersection},
\[
T_{m_0}\mathscr{D}_{\cP{p}_1} \cap T_{m_0}\mathscr{D}_{\cP{p}_2} \cap T_{m_0}\mathscr{D}_{\cP{p}_3} = \{0\}.
\]

Let $\parameterSpace_V = \oplus_{i=1}^3 \parameterSpace_V^i \cong \R^{2n}$, with 
$$\parameterSpace_V^i = \{\alpha_V^i v : v \in T_{m_0}\mathscr{D}_{\cP{p}_i}\}\cong \R^{\ind \cP{p}_i},$$ where $\alpha_V^i$ is the bump
function on the $i$-th end of $\dot\Sigma$ such that it equals $1$ when $s \geq 2$ and it equals $0$ when $s \leq 1$  as  illustrated in
Figure~\ref{fig: cutoff functions}. Let $\parameterSpace_E = \oplus_{i=1}^3 \parameterSpace_E^i$, with 
$$\parameterSpace_E^i = \{\alpha_E^i v: v \in T_{m_0}^\perp \mathscr{D}_{\cP{p}_i}\} \cong \R^{n-\ind \cP{p}_i},$$
where $\alpha_E^i$ is the bump function on $S_i$ that equals $1$ for $s \leq \epsilon^{-1}-1$ and $0$ for $s \geq \epsilon^{-1}$. 
For any perturbation parameters
\[
(\td\xi, \xi_1, \xi_2, \xi_3, \tdlv, \constantVector) \in
W^{1,p}_\delta\bigl(\tdlm^* T(\C^n; \td{\tup\Lambda})\bigr)
\oplus \left( \oplus_{i=1}^3 W^{1,p}_{(\epsilon),0}(w_i^*TX) \right) \oplus \parameterSpace_V \oplus \parameterSpace_E
\]
as in Proposition~\ref{prop: add elements to fill cokernel}, we define the approximate map
$\app$ by interpolating between the local model and the edge models in a small neighborhood of
$(m_0, 0) \in X$, where the exponential map is given by addition:
\begin{enumerate}
\item On $S_i$ with $s > s_1 + 2h$:
\[
\app = w_i + \xi_i + \constantVector_i.
\]
\item On $\dot\Sigma \setminus \coprod_i S_i^{s \geq s_0 - 2h}$:
\[
\td{\app} = \tdlm + \td\xi + \tdlv.
\]
\item On $S_i$ with $s_0 - 2h < s < s_1 + 2h$:
\begin{align}
\td{\app}(s,t) =\,
&\beta_E(s)\bigl(\td{w}_i(s,t) - \td{L}_i(s,t) + \td\xi_i(s,t)  + \constantVector_i \bigr) \nonumber\\
&+ \beta_V(s)\bigl(\tdlm(s,t) - \td{L}_i(s,t) + \td\xi(s,t)  + \tdlv \bigr)
+ \td{L}_i(s,t), \label{eq:approx_sol}
\end{align}
\end{enumerate}
where $\td{L}_i$ is defined in Equation~\eqref{eqn: linear model}. It is straightforward to
verify that $\app$ is well-defined.

We check that $\app$ satisfies the Lagrangian boundary conditions. Since
$\td\Lambda_i = \R^n + \iu\tup{c}_i$, over the interpolation region we have
\[
\td{w}_i(s,0) \in \td\Lambda_i, \qquad \td{w}_i(s,1) \in \td\Lambda_{i-1},
\]
\[
\td{L}_i(s,0) \in \td\Lambda_i, \qquad \td{L}_i(s,1) \in \td\Lambda_{i-1},
\]
and
\begin{align*}
\td{\xi}_i(s,0) \in T_{\td{w}_i(s,0)}\td\Lambda_i \simeq \R^n, & \qquad
\td{\xi}_i(s,1) \in T_{\td{w}_i(s,1)}\td\Lambda_{i-1} \simeq \R^n,\\
\tdlv(s) \in \R^n, & \qquad \constantVector(s) \in \R^n.
\end{align*}
Therefore $\app(s,0) \in \td\Lambda_i$ and $\app(s,1) \in \td\Lambda_{i-1}$, as required.

We now compute $\overline\partial_J\app$. Write
\[
\overline\partial_J\app = \beta_V\Theta_V + \beta_E\Theta_E,
\]
where $\Theta_V$ is a section of $\wedge^{0,1}\dot\Sigma \otimes_\C \tdlm^*T\C^n$ and $\Theta_E$
is a section of $\coprod_i \wedge^{0,1}S_i \otimes_\C w_i^*TX$, so that both $\beta_V\Theta_V$
and $\beta_E\Theta_E$ are sections of $\wedge^{0,1}\dot\Sigma \otimes_\C \app^*TX$. Explicitly,
\[
\Theta_V
= \overline\partial\td\xi + \overline\partial\tdlv
+ \Bigl[\sum_{i=1}^3 \frac{\dd\beta_E}{\dd s}
  \bigl(\td{w}_i  - \td{L}_i + \td\xi_i +\constantVector_i \bigr) + \td{F}_0\Bigr]
  \otimes (\dd s + \iu\,\dd t),
\]
where $\td{F}_0$ is the error term arising from the difference between $J$ and $J_0$; under
Assumption~\ref{assumption: flat}, $\td{F}_0 = 0$. On the region $S_i$ with
$s \leq s_1+2h$,
\[
\Theta_E
= \Bigl[D_i\td\xi_i + D_i \constantVector_i
+ \frac{\dd\beta_V}{\dd s}\bigl(\tdlm - \td{L}_i + \td\xi  + \tdlv \bigr)
+ \td{F}_i\Bigr] \otimes (\dd s + \iu\,\dd t),
\]
while on $S_i$ with $s > s_1+2h$,
\[
\Theta_E = (D_i\xi_i + F_i) \otimes (\dd s + \iu\,\dd t).
\]

Here $D_i$ is defined in \ref{eqn: pushing to morse} (there $w = w_i$ and $D_i = D_w$.), which mediates between the linearized Cauchy-Riemann operator and the linearized gradient operator, and $F_i$ is the error term
\begin{equation}\label{eq:error term}
F_i = \overline\partial_J w_i + H_i + E_i,
\end{equation}
where $H_i = (\mathcal{L}_{w_i} - D_i)\xi_i$ is the difference between the actual
linearization $\mathcal{L}_{w_i}$ of $\overline\partial_J$ at $w_i$ and the operator $D_i$,
and $E_i$ is the type~1 quadratic remainder from Lemma~\ref{lemma: linearization}. By
Lemma~\ref{lemma: linearization}, $H_i$ satisfies the pointwise estimate
\begin{equation}\label{est:H}
|H_i| \leq C(\epsilon^2|\xi_i| + \epsilon|\nabla\xi_i|).
\end{equation}
Following the terminology of \cite{hutchings2009gluing}, a term $E(\xi)$ is \emph{type~1
quadratic} if
\[
E(\xi) = P(\xi) + Q(\xi) \cdot \nabla\xi,
\]
where $|P(\xi)| \leq C\epsilon|\xi|^2$ and $|Q(\xi)| \leq C|\xi|$ for some constant $C$. 

The $E(\xi)$ corresponds to the quadratic term $N(\xi)$ in the Floer-Picard Lemma \ref{lem:app-floer-picard} in Appendix. 

The pre-gluing equation $\overline\partial_J\app = 0$ is thus equivalent to the system
\begin{equation}
\Theta_V = 0, \qquad \Theta_E = 0.
\end{equation}

\section{Gluing} \label{section: gluing}
\subsection{Obstruction section}

Over the parameter space $\parameterSpace_V \times \parameterSpace_E$, we define a trivial obstruction bundle
\[
\mathcal O = \parameterSpace_V \times \parameterSpace_E \times \R^{2n} \times \R^n .
\]
Our goal is to construct an obstruction section of this bundle.

\subsubsection{Interior obstruction}
We first consider the obstruction arising from the interior model.  Set
\begin{equation}
\eta = \Theta_V - \overline{\p} \td\xi .
\end{equation}
Introduce the function spaces
\[
W = W^{1,p}_\delta(\tdlm^*(T\C^n; \td{\tup{\Lambda}})), 
\qquad 
W' = L^p_{\delta}\bigl(\wedge^{0,1}_{\dot\Sigma}\otimes_\C \tdlm^{*}T\C^{n} \bigr).
\]
The operator
\[
\overline{\p}|_{W}: W \to W'
\]
has a $2n$-dimensional cokernel.

For each puncture $p_i$ ($i=1,2,3$), define a subspace $Y_i \subset W'$ by
\[
Y_i = \op{span} \langle \frac{\dd \alpha_V^i}{\dd s} e_i^1, \dots, \frac{\dd \alpha_V^i}{\dd s} e_i^{\ind(\cP{p}_i)} \rangle,
\]
where 
\[  
\{e_i^1, e_i^2,\dots,e_i^{\ind \cP{p}_i}\}
\]
is an orthogonal basis of $T_{m_0}\mathscr{D}_{\cP{p}_i}$ such that $\|e_i^j\|_g = \epsilon$.  
These subspaces yield the direct sum decomposition
\[
W' = Y_1 \oplus Y_2 \oplus Y_3 \oplus \op{im}(\overline{\p}|_{W});
\]
and $\alpha_V^i$ is the cutoff function on $\dot\Sigma$ that equals $0$ for $s \leq 1$ and $1$ for $s \geq 2$ as illustrated in Figure~\ref{fig: cutoff functions}.
Denote by $\Pi_i: W' \to Y_i$ the projection onto $Y_i$ with respect to this splitting, and let $\Pi = \Pi_1+\Pi_2+\Pi_3$.

\begin{lemma}
The projections $\Pi, \Pi_1, \Pi_2, \Pi_3: W' \to W'$ are bounded.
\end{lemma}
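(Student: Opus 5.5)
The plan is to deduce boundedness from the closed-graph (or open mapping) theorem applied to the splitting $W' = Y_1 \oplus Y_2 \oplus Y_3 \oplus \op{im}(\overline{\p}|_{W})$. First we record that $\op{im}(\overline{\p}|_W)$ is a closed subspace of $W'$, which is immediate from Proposition~\ref{prop: computation of cokernel}: $\overline{\p}|_W$ is Fredholm, so its range is closed and of codimension $2n$. Each $Y_i$ is finite dimensional, hence closed, and the sum $Y := Y_1\oplus Y_2\oplus Y_3$ is finite dimensional. The direct sum decomposition itself follows from Proposition~\ref{prop: add elements to fill cokernel}: the $2n$ elements $\frac{\dd\alpha_V^i}{\dd s}e_i^j$ are linearly independent modulo $\op{im}\overline{\p}$ and have the right count to span the cokernel. (Note that $\overline{\p}(\alpha_V^i e_i^j) = \frac{\dd\alpha_V^i}{\dd s} e_i^j\otimes(\dd s+\iu\,\dd t)$, so $Y_i$ is exactly the image of $\parameterSpace_V^i$. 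The rescaling $\|e_i^j\|_g=\epsilon$ does not affect spans.)

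Next we produce the projection concretely, which gives boundedness directly. Consider the bounded bijection
\[
T\colon W\oplus \R^{2n}\to W',\qquad (\xi,\varsigma)\mapsto \overline{\p}\xi + \textstyle\sum_{i,j}\varsigma_i^j \tfrac{\dd\alpha_V^i}{\dd s}e_i^j ,
\]
which is an isomorphism by Proposition~\ref{prop: add elements to fill cokernel}. Since $W\oplus\R^{2n}$ and $W'$ are Banach spaces, the open mapping theorem makes $T^{-1}$ bounded. Then
\[
\Pi_i = T\circ \op{pr}_{\R^{\ind \cP{p}_i}}\circ T^{-1},
\]
where $\op{pr}$ is the coordinate projection onto the $\varsigma_i$ block. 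This is a composition of bounded maps, hence bounded, and $\Pi=\Pi_1+\Pi_2+\Pi_3$ is bounded as a sum. Equivalently, one can write the coefficients $\varsigma_i^j(\eta)$ as pairings $\langle \eta, \rho\rangle$ with a dual basis $\rho$ of $\ker\overline{\p}^*\subset L^q_{-\delta}$ from Proposition~\ref{prop:cokernel}, after inverting the finite Gram matrix $\bigl(\langle \frac{\dd\alpha_V^i}{\dd s}e_i^j,\rho_k\rangle\bigr)$. Hölder's inequality in the weighted pairing $L^p_\delta\times L^q_{-\delta}$ then gives an explicit bound.

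The only step needing care is this dual-basis version, if one wants a quantitative constant, since the later contraction argument needs a bound uniform in $\epsilon$. That Gram matrix is invertible exactly because the images are independent modulo the range, which is again Proposition~\ref{prop: add elements to fill cokernel}. Its entries are independent of $\epsilon$ once the factor coming from $\|e_i^j\|=\epsilon$ is normalized out, because $W$, $W'$ and $\overline{\p}$ on the local model do not depend on $\epsilon$. So I expect no real obstacle: the statement is soft functional analysis, and the main point is to note that all the objects are $\epsilon$-independent, so the operator norm of $\Pi$ is a fixed constant.
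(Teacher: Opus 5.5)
Your proof is correct. The paper states this lemma with no proof at all, implicitly relying on the standard fact that a projection onto a finite-dimensional subspace along a closed complement is bounded, so there is no different argument to compare against. Your route makes that fact explicit: closedness of $\operatorname{im}(\overline{\partial}|_W)$ comes from Proposition~\ref{prop: computation of cokernel}, and the splitting $W'=Y_1\oplus Y_2\oplus Y_3\oplus\operatorname{im}(\overline{\partial}|_W)$ comes from Proposition~\ref{prop: add elements to fill cokernel}. The open mapping theorem then makes $T^{-1}$ bounded, and $\Pi_i=T\circ\operatorname{pr}_i\circ T^{-1}$. Equivalently, $\Pi$ is the bounded quotient map $W'\to W'/\operatorname{im}\overline{\partial}$ followed by a linear isomorphism of finite-dimensional spaces onto $Y_1\oplus Y_2\oplus Y_3$.

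Your remark on $\epsilon$-uniformity covers a point the paper leaves unstated, and it is worth keeping. The proofs of Theorem~\ref{thm:contraction} and Proposition~\ref{prop: homotopy} bound $\|(\Pi-1)\eta\|_{L^p_\delta}$ and $\|\Pi_j(\cdot)\|$ by constants $C$ that must not depend on $\epsilon$. This holds because, in the rescaled chart, every ingredient is $\epsilon$-independent. That includes the Lagrangians $\R^n+\iu c_i$, the weight $\delta$, the cutoffs $\alpha_V^i$ (supported in $1\le s\le 2$), and the subspaces $T_{m_0}\mathscr{D}_{\mathsf{p}_i}$. So $\Pi_i$ is literally the same operator for every $\epsilon$, and the normalization $\|e_i^j\|_g=\epsilon$ only rescales basis vectors without changing any span.

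This uniformity argument is specific to $W'$. The analogous bound for the edge projections $\Pi_i'$ on $V_i'$ would need separate justification, since $D_i$ and the $L^p_{(\epsilon)}$ norms there depend on $\epsilon$.
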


Using $\eta$, the equation $\Theta_V = 0$ can be rewritten as
\[
\overline{\p}\td\xi + (1-\Pi)\eta + \Pi\eta = 0 .
\]

\subsubsection{Exterior obstruction}
Now consider the obstruction coming from the exterior model.  Set $\Sigma_i = [0,\infty) \times [0,1]$, and define
\[
V_i = W^{1,p}_{(\epsilon),0}(\Sigma_i, w_i^*TX),
\qquad
V_i' = L^{p}_{(\epsilon)}(\wedge^{0,1}_{\Sigma_i}\otimes_\C w_i^*TX).
\]
The restricted operator
\[
D_i|_{V_i}:V_i\to V_i'
\]
is not surjective.

Let $v_i^1, \dots, v_i^{n - \ind(\cP{p}_i)}$ be an orthogonal basis of the orthogonal complement $T_{m_0}^\perp \mathscr{D}_{\cP{p}_i}$ of $T_{m_0} \mathscr{D}_{\cP{p}_i}$ in $T_{m_0} M$ such that $\|v_i^j\|_g = \epsilon$.  Define
\[
Z_i=\op{span}\langle\frac{\dd \alpha_
V^i}{\dd s} v_i^1,\dots,\frac{\dd \alpha_E^i}{\dd s} v_i^{\,n-\ind(\cP{p}_i)}\rangle\subset V_i',
\]
where $\alpha_E^i$ is the cutoff function on $S_i$ that equals $1$ for $s \leq \epsilon^{-1} -1$ and $0$ for $s \geq \epsilon^{-1}$ as illustrated in Figure~\ref{fig: cutoff functions}.

\begin{lemma}
\[
\op{im}D_i|_{V_i} \oplus Z_i = V'_i .
\]
\end{lemma}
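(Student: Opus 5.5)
The plan is to combine the Fredholm information from Proposition~\ref{prop: fredholm index of edge} with a dimension count, and then show directly that $Z_i$ is transverse to the image. I read the generators of $Z_i$ as $\frac{\dd\alpha_E^i}{\dd s}v_i^j$ for $j=1,\dots,n-\ind(\cP{p}_i)$; the $\alpha_V^i$ in the first generator looks like a typo. These elements are supported in $\epsilon^{-1}-1\le s\le \epsilon^{-1}$. By Proposition~\ref{prop: fredholm index of edge}, $D_i|_{V_i}$ is Fredholm and injective, and its cokernel has dimension $n-\ind(\cP{p}_i)$, which equals $\dim Z_i$ because the $v_i^j$ form a basis of $T^\perp_{m_0}\mathscr{D}_{\cP{p}_i}$. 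Since $\op{im}D_i$ is closed of codimension $\dim Z_i$, it suffices to prove $Z_i\cap\op{im}D_i|_{V_i}=\{0\}$. Equivalently, the pairing matrix between the generators of $Z_i$ and a basis of the annihilator of the image should be nondegenerate.

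First I use the commutative diagram~\eqref{eqn: pushing to morse} to replace $D_i$ by $D_{\bar\chi^\epsilon}$. This conjugation preserves the form of $Z_i$ up to $\dd\Phi^{-1}(t)$, which is the identity on horizontal vectors at leading order. Next I recall from the proof of Proposition~\ref{prop: fredholm index of edge} that the annihilator of $\op{im}D_{\bar\chi^\epsilon}$ in $L^q$ consists of $t$-independent sections $b(s)\in T_{\chi^\epsilon(s)}M$. These satisfy the adjoint equation~\eqref{eqn: adjoint equation for edge} and decay as $s\to\infty$, and are therefore determined by $b(0)\in T^\perp_{m_0}\mathscr{D}_{\cP{p}_i}$. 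Let $b_1,\dots,b_{n-\ind(\cP{p}_i)}$ be the cokernel elements with $b_k(0)=v_i^k/\epsilon$. I then need to show that the matrix $A_{jk}=\langle \frac{\dd\alpha_E^i}{\dd s}v_i^j,\,b_k\rangle$ is invertible.

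To compute $A$, I use Assumptions~\ref{assumption: flat} and~\ref{assumption: linear}. In the flat chart around $m_0$ the functions $f_i-f_{i-1}$ are linear, so their Hessian vanishes there. Hence $\nabla_b\grad_{\epsilon(f_i-f_{i-1})}=0$ and~\eqref{eqn: adjoint equation for edge} reduces to $\nabla_s b=0$ as long as $\chi^\epsilon(s)=\chi_i(\epsilon s)$ stays in the chart. For $s\le\epsilon^{-1}$ the point $\chi_i(\epsilon s)$ lies within distance $O(1)\cdot|\grad(f_i-f_{i-1})|$ of $m_0$. After shrinking the scale (equivalently, reading $\epsilon^{-1}$ as $r\epsilon^{-1}$ for the chart radius $r$), the whole support of $\frac{\dd\alpha_E^i}{\dd s}$ therefore lies where $b_k$ is constant in the flat coordinates. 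The pairing then becomes
\[
A_{jk}=\int_0^\infty \frac{\dd\alpha_E^i}{\dd s}\,\langle v_i^j,b_k(0)\rangle\,\dd s=-\langle v_i^j,v_i^k\rangle/\epsilon=-\epsilon\,\delta_{jk}
\]
(up to the harmless weight factor $\epsilon^{2-p}$ from the $L^p_{(\epsilon)}$ pairing). So $A$ is a nonzero multiple of the identity, hence invertible. This gives $Z_i\cap\op{im}D_i=\{0\}$, and together with the dimension count it gives the splitting $\op{im}D_i|_{V_i}\oplus Z_i=V_i'$.

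The main obstacle is the step that $b_k$ is constant on the support of $\frac{\dd\alpha_E^i}{\dd s}$. This requires checking that the edge point at $s\approx\epsilon^{-1}$ still lies in the region where $g$ is flat and the $f_i$ are linear. If it does not, I would instead integrate by parts against $\alpha_E^i v_i^j$. That route gives $A_{jk}=-\langle v_i^j,b_k(0)\rangle-\int\alpha_E^i\langle v_i^j,\nabla_s b_k\rangle$, and I would have to bound the second term, of size $O(\epsilon\cdot\epsilon^{-1}\cdot\sup|\mathrm{Hess}|)$, using the Hessian being small near $m_0$. I would try the chart-containment argument first, since the assumptions appear designed to make it work.
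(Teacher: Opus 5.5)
Your proposal is correct and is essentially the paper's argument: the dimension count from Proposition~\ref{prop: fredholm index of edge} reduces the splitting to nondegeneracy of the pairing between the generators $\frac{\dd\alpha_E^i}{\dd s}v$ of $Z_i$ and the $t$-independent cokernel elements $b$, and this pairing is $\pm\langle b(0),v\rangle$ (you phrase it as invertibility of your matrix $A$, the paper as ``no nonzero $b$ annihilates $Z_i$'', which is the same statement for a square pairing). Your justification of that last step is more accurate than the paper's, which says the cutoff ``integrates to $1$ near $s=0$'' although $\frac{\dd\alpha_E^i}{\dd s}$ is supported in $[\epsilon^{-1}-1,\epsilon^{-1}]$ and integrates to $-1$: what is really used is your observation that $b$ is constant on $[0,\epsilon^{-1}]$ by Assumptions~\ref{assumption: flat} and~\ref{assumption: linear}, given that the chart contains $\chi_i([0,1])$, which the paper tacitly assumes when it views $w_i$ on $[0,\epsilon^{-1})\times[0,1]$ inside $\C^n$ (your integration-by-parts fallback would not help without this, since its error term is comparable to the main term unless the Hessian is small along all of $\chi_i([0,1])$).
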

\begin{proof}
By Proposition~\ref{prop: fredholm index of edge}, we have
$\dim \op{coker}(D_i|_{V_i}) = \dim Z_i$. Hence it is enough to show that
$\op{im}D_i|_{V_i} + Z_i = V'_i$, or equivalently that no nonzero cokernel
element is orthogonal to $Z_i$.

Suppose there is a nonzero $b \in V_i'$ with
$\langle D_i a, b\rangle = 0$ for all $a \in V_i$ and
$\langle \dd\Phi(t)\rho, b\rangle = 0$ for all $\rho \in Z_i$. Then $b$
satisfies the adjoint equation \eqref{eqn: adjoint equation for edge} and the
same boundary conditions as in the proof of
Proposition~\ref{prop: fredholm index of edge}. In particular, $b$ is
$t$-independent and decays as $s \to +\infty$, so its value at $s=0$,
$b(0)$, is a nonzero vector in $T_{m_0}^\perp \mathscr{D}_{\cP{p}_i}$.

For any $v \in T_{m_0}^\perp \mathscr{D}_{\cP{p}_i}$, the element
$\rho_v = \frac{\dd \alpha_E^i}{\dd s} v$ belongs to $Z_i$. The pairing with
$b$ is then
\[
\langle b, \dd\Phi(t)\rho_v\rangle
= \int_0^\infty \left\langle b(s), \frac{\dd \alpha_E^i}{\dd s}(s) v \right\rangle \dd s
= \langle b(0), v \rangle,
\]
since $b$ is constant in $t$ and the cutoff integrates to $1$ near $s=0$.
Because $b(0)\neq 0$, one can choose $v$ so that this pairing is nonzero.
This contradicts the assumption that $b$ annihilates $Z_i$.

Therefore no such nonzero $b$ exists, and we conclude that
$\op{im}D_i|_{V_i} \oplus Z_i = V'_i$.
\end{proof}

Let $\Pi_i': V_i' \to Z_i$ denote the projection onto $Z_i$ with respect to this splitting.

Over each end $\Sigma_i$, define $\eta_i$ by
\[
\Theta_E = (D_i \xi_i + \eta_i)\otimes (\dd s + \iu \dd t).
\]

\subsubsection{The nonlinear system}
Our aim is to solve the following system:
\[
\begin{cases}
\overline{\p} \td\xi = (\Pi - 1) \eta, &\quad \Pi \eta = 0,\\[4pt]
D_i \xi_i = (\Pi_i' - 1) \eta_i, &\quad \Pi_i' \eta_i = 0 \quad (i=1,2,3).
\end{cases}
\]
Note that this system is nonlinear.

Define the space
\[
\mathbb W := W^{1,p}_{\delta}\bigl(\tdlm^{*}T(\C^{n};\td{\tup{\Lambda}})\bigr)
\bigoplus_{i=1}^3 W^{1,p}_{(\epsilon),0}(w_i^*TX).
\]

For each $i$, let 
\[
D_i^{-1}: \op{im}(D_i) \to W^{1,p}_{(\epsilon),0}(w_i^*TX)
\]
denote the inverse operator of
\[
D_i: W^{1,p}_{(\epsilon),0}(w_i^*TX) \to \op{im}D_i.
\]

Fix parameters $\tdlv\in\R^{2n}$ and $\constantVector\in\R^n$.  Define a map $\Psi: \mathbb W \to \mathbb W$ by
\begin{equation}\label{eq:Psi}
\Psi(\td{\xi}, \xi_1, \xi_2, \xi_3)
= -\bigl(\overline{\p}^{-1}(\Pi - 1)\eta,\;
D_1^{-1} (\Pi_1' - 1)\eta_1,\;
D_2^{-1}(\Pi_2' - 1) \eta_2,\;
D_3^{-1} (\Pi_3' - 1)\eta_3\bigr).
\end{equation}

The norm on $\mathbb W$ is given by
\begin{align}
\|(\td{\xi}, \xi_1,\xi_2,\xi_3)\|_{\W}
&= \| \td{\xi} \|_{W^{1,p}_\delta(\C^n)} + \frac{1}{\epsilon}\sum_{i=1}^3 \|\xi_i \|_{W^{1,p}_{(\epsilon)}(M)}, \label{normW}\\
\|\td{\xi}\|_{W^{1,p}_\delta(\C^n)}
&= \bigl\|\td{\xi} \, e^{\frac{\delta}{p}s}\bigr\|_{W^{1,p}(\C^n)}, \label{norminterior}\\
\|\xi_i\|_{W^{1,p}_{(\epsilon)}(M)}
&= \left(\int_\Sigma \bigl(\epsilon^2 |\xi_i|^p + \epsilon^{2-p} |\nabla \xi_i |^p\bigr) \right)^{\frac{1}{p}}, \label{normexterior}
\end{align}
where $\Sigma = [0, \infty) \times [0,1]$.  
For later use we also recall the $L^p_{(\epsilon)}$-norm
\[
\| \eta \|_{L^p_{(\epsilon)}(M)} = \left(\int_{\dot \Sigma} \epsilon^{2-p} |\eta|^{p} \right)^{1/p}.
\]

\textbf{Convention}: Throughout this section we use the same symbol for a section of a vector bundle and its representative in local coordinates. The two ``charts" --the chart of
$X$ and the chart of the tangent space $T_{(m_0,0)}X\cong\mathbb{C}^n$ --are related by a
blowup of factor $\eps$, so the norms differ accordingly.  We use
subscripts on the norm to indicate which chart is being used:
$\|\cdot\|_{L^p(M)}$ for the norm on $M$ and $\|\cdot\|_{L^p(\mathbb{C}^n)}$
for the norm on the tangent space.  The reader should bear in mind that
for the same geometric section $\xi_i$,
\[
  \|\xi_i\|_{L^p(\mathbb{C}^n)} \;=\; \frac{1}{\eps}\|\xi_i\|_{L^p(M)}.
\]
This is the reason for the factor $\frac{1}{\eps}$ before the terms $\|\xi_i \|_{W^{1,p}_{(\epsilon)}(M)}$ in the norm on $\mathbb W$. 

For any $\tdlv \in \parameterSpace_V$, we write $\tdlv = \tdlv_1 + \tdlv_2 + \tdlv_3$, where $\tdlv_i = \sum_{k=1}^{\ind(\cP{p}_i)} \tdlv_i^k \alpha_V^i e_i^k$. 
For any $r' > 0$, denote by $B_V(r')\subset \parameterSpace_V$ such that 
$$\left(\sum_{k=1}^{\ind(\cP{p}_i)}|\tdlv_i^k|^p \right)^{\frac{1}{p}} \leq r' \qquad i = 1, 2, 3.$$
For any $\constantVector \in \parameterSpace_E$, we write $\constantVector = (\constantVector_1, \constantVector_2, \constantVector_3)$, where $\constantVector_i = \sum_{k=1}^{n-\ind(\cP{p}_i)} \constantVector_i^k \alpha_E^i v_i^k$.
 For any $r' > 0$, denote by $B_E(r')\subset \parameterSpace_E$ such that $$\left(\sum_{k=1}^{n-\ind(\cP{p}_i)}|\constantVector_i^k|^p \right)^{\frac{1}{p}} \leq r' \qquad i = 1, 2, 3.$$

\begin{theorem}\label{thm:contraction}
There exists $\epsilon_0 > 0$ with the following property. For every $0 < \epsilon < \epsilon_0$, one can choose $r, r'>0$ so that, for every $\tdlv \in B_V(r')$ and $\constantVector\in B_E(r')$, the map $\Psi$ defined in \eqref{eq:Psi} is a contraction on the ball
\[
B = \{x \in \mathbb W : \|x\|_{\W} \leq r\}.
\]
More precisely,
\begin{enumerate}
    \item $\Psi(B) \subset B$;
    \item there exists a constant $\rho < 1$ such that $\|\Psi(x) - \Psi(x')\|_{\W} \leq \rho \|x - x'\|_{\W}$ for all $x, x' \in B$.
\end{enumerate}
\end{theorem}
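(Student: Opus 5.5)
Under this notation the proof follows the standard Floer--Picard pattern: bound uniformly the linear right inverses, estimate the inhomogeneous error terms, and show that every $\xi$-dependent term is either small-linear or type~1 quadratic.

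\textbf{Step 1: uniform right inverses.} The operator $\overline{\p}$ restricted to $\op{im}$ is fixed and independent of $\epsilon$, so by Proposition~\ref{prop: computation of cokernel} and the bounded projection lemma we have $\|\overline{\p}^{-1}(1-\Pi)\|\le C$. For the edges, the diagram \eqref{eqn: pushing to morse} transfers $D_i$ to $D_{\bar\chi^\epsilon}$. After the rescaling $\sigma=\epsilon s$, the weighted norms $W^{1,p}_{(\epsilon)}$ and $L^p_{(\epsilon)}$ become (up to a common power of $\epsilon$) the standard norms on $[0,\infty)\times[0,\epsilon^{-1}]$ for the linearized Morse operator. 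Kernel vanishing (Proposition~\ref{prop: fredholm index of edge}) and the complement $Z_i$ then give a bound $\|D_i^{-1}(1-\Pi_i')\|\le C$ that is uniform in $\epsilon$. I would argue this by contradiction: take a sequence $\epsilon\to0$ with unit-norm $\xi$ and $D_i\xi\to 0$, and pass to the adiabatic limit, where the $t$-dependence is killed by the Poincar\'e inequality on $[0,1]$. Together with the condition $\int\xi(0,t)\,\dd t=0$ this yields a nonzero kernel element of the Morse operator, which is a contradiction. The projections $\Pi_i'$ must also be shown to be uniformly bounded, by pairing with the cokernel elements $b$ found in the preceding lemma.

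\textbf{Step 2: the zeroth-order term $\|\Psi(0)\|_{\W}$.} The interior contribution at $\xi=0$ is the cutoff error $\frac{d\beta_E}{ds}(\td w_i-\td L_i+\constantVector_i)$. By Lemma~\ref{lemma: asymptotics of w_i} this is $O(h^{-1}\epsilon s_0^2)$, supported where $s\le s_0$, with weight $e^{\delta s_0/p}=\epsilon^{-(1-2/p-\sigma)}$. The result is roughly $\epsilon^{2/p+\sigma}(\ln\epsilon^{-1})^{2}$, which is small. The $\constantVector$ part is of size $r'$ times a weight that is also small once the basis is normalized with $\|v_i^j\|=\epsilon$. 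The edge contribution is $\frac{d\beta_V}{ds}(\td w-\td L_i+\tdlv)$ near $s_1$. There $|\tdlm-\td L_i|\le Ce^{-\pi s_1}$ by Lemma~\ref{lemma: local model asymptotic property}. The term $\overline{\p}_Jw_i$ is bounded by Lemma~\ref{lemma: dbar J estimate for edge model}, and after the $\epsilon^{-1}$ rescaling in $\|\cdot\|_{\W}$ it gives $O(\epsilon^{1-1/p})$. The choices \eqref{eq:s0} and \eqref{eq:s1} are designed exactly so that both exponents are positive. This sets $r$ to be a fixed multiple of these quantities plus $Cr'$, with $r'$ chosen comparably small.

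\textbf{Step 3: the Lipschitz estimate.} The differences $\Psi(x)-\Psi(x')$ arise from three sources. First, the cross-coupling terms $\frac{d\beta_E}{ds}\xi_i$ in $\Theta_V$ and $\frac{d\beta_V}{ds}\td\xi$ in $\Theta_E$ carry a factor $h^{-1}\sim(\ln\epsilon^{-1})^{-1}$ and are supported where the two weights compare favourably. The gap $s_1-s_0$ makes the $e^{\delta s/p}$ weight versus the $\epsilon$-scaling mismatch contribute at most a factor of order $\epsilon^{\sigma}$ up to logarithmic factors. Second, $H_i$ satisfies \eqref{est:H} and is small-linear, giving a factor $O(\epsilon)$ in the $(\epsilon)$-norms. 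Third, the type~1 quadratic terms $E_i$ are controlled via the Sobolev embedding $W^{1,p}\subset C^0$ (using $p>2$) with uniform constants in the $\epsilon$-norms, giving $\le Cr\|x-x'\|$. Altogether the Lipschitz constant is $\rho\le C(h^{-1}+\epsilon^{\sigma'}+r)<1/2$ for small $\epsilon$, and then $\Psi(B)\subset B$ follows from $\|\Psi(x)\|\le\|\Psi(0)\|+\rho r$.

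\textbf{Main obstacle.} The hardest part is the bookkeeping between the two charts (the factor $\epsilon^{-1}$) and the two weights in the overlap region $s_0-2h\le s\le s_1+2h$. The exponents \eqref{eq:s0}--\eqref{eq:s1} and the condition $p>4$ must make every cross term carry a positive power of $\epsilon$ or a factor of $h^{-1}$. Of the Step~1 ingredients, the uniform inverse bound for $D_i$ is the most delicate, and I would rely on the adiabatic contradiction argument to get it.
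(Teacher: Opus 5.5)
\emph{There is a genuine gap in your Step~3}, at the coupling term $\frac{\dd\beta_E}{\dd s}\xi_i$ that feeds the edge corrections into the interior equation. This term lives on $[s_0-2h,s_0]$. There the interior weight is $e^{\delta s_0/p}=\epsilon^{-(1-\frac2p-\sigma)}$, which is \emph{large}, so the weights do not compare favourably. The gap $s_1-s_0$ plays no role for this term. It is the other coupling term, $\frac{\dd\beta_V}{\dd s}\xi$ at $s_1$, that is small by the weight alone: $\epsilon^{\frac2p-1}e^{-\delta s_1/p}=\epsilon^{\sigma}$.

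From the $W^{1,p}_{(\epsilon)}$-norm alone, the $L^p$ part controls $\xi_i$ only with a loss $\epsilon^{-2/p}$. The best pointwise control is Proposition~\ref{prop:sob}, $\|\xi_i\|_{C^0}\le C\epsilon^{-1/p}\|\xi_i\|_{W^{1,p}_{(\epsilon)}(M)}$. So the Sobolev constant is not uniform in $\epsilon$, contrary to what you claim. For sections not anchored at $s=0$ this bound is sharp: a plateau of length $\sim\epsilon^{-1}$ saturates it. Feeding this in gives
\[
\Bigl\|\frac{\dd\beta_E}{\dd s}\xi_i\Bigr\|_{L^p_\delta(\C^n)}\lesssim h^{-1/q}\,\epsilon^{-(1-\frac1p-\sigma)}\cdot\epsilon^{-1}\|\xi_i\|_{W^{1,p}_{(\epsilon)}(M)} .
\]
Since $\sigma<1-\frac2p$, this coupling constant blows up, and no factor $h^{-1}$ can compensate a negative power of $\epsilon$. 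With the norm $\|\cdot\|_{\W}$ as given, neither $\Psi(B)\subset B$ nor the Lipschitz bound closes along your route.

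The missing idea is the anchoring condition $\int_0^1\dd\Phi_i(t)^{-1}\xi_i(0,t)\,\dd t=0$ built into $W^{1,p}_{(\epsilon),0}$. You invoke it only for injectivity in Step~1. The paper uses it in the anchored thin-ribbon estimate, Proposition~\ref{prop:app-anchored-thin-ribbon}. That estimate starts from a zero of $a_i=\dd\Phi_i^{-1}\xi_i$ on $\{0\}\times[0,1]$ and integrates the gradient across unit squares up to $s_0$. It therefore uses only the gradient part of the norm, which carries the favourable weight $\epsilon^{2-p}$:
\[
|\xi_i|\le C(s_0+1)^{1/q}\,\epsilon^{1-\frac2p}\,\|\xi_i\|_{W^{1,p}_{(\epsilon)}(M)} \quad\text{on } [s_0-2h,s_0]\times[0,1].
\]
The gain $\epsilon^{1-\frac2p}$ exactly beats $e^{\delta s_0/p}$, leaving $C h^{-1/q}(s_0+1)^{1/q}\epsilon^{\sigma}\le C\epsilon^{\sigma/2}$. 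The logarithms cancel because $h=s_0/4$, so the smallness comes only from $\epsilon^\sigma$, not from $h^{-1}$. This is precisely why the zero-average normalization is imposed on the edge models.

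Two smaller bookkeeping corrections:
\begin{enumerate}
\item The $\constantVector$ and $\tdlv$ terms enter with factors $e^{\delta s_0/p}h^{-1/q}$ and $\epsilon^{\frac2p-1}h^{-1/q}$ respectively, not with a small weight or as $Cr'$. So $r'$ must be at most of order $\epsilon^{1-\frac2p}r$.
\item $\overline{\p}_Jw_i$ contributes $\epsilon^{-1}\epsilon^{\frac2p-1}\epsilon^{2-\frac1p}=\epsilon^{1/p}$ to the $\W$-norm, not $\epsilon^{1-\frac1p}$.
\end{enumerate}
These are why the paper takes $r=K\epsilon^{1/p}$ and $r'=K\epsilon^{1-1/p}$. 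Your Step~1, the uniform bounds on the inverses and projections, is something the paper simply assumes, so it is an addition rather than a conflict.
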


\begin{proof}
Choose $r = K \epsilon^{\frac{1}{p}}$ and $r' = K\epsilon^{1-\frac{1}{p}}$, where $K > 0$ is a large constant independent of $\epsilon$ to be determined.  Denote by $C>0$ a generic constant independent of $\epsilon$.

\paragraph{(1).  Invariance of the ball.}
Write $(\td{\xi}', \xi_1', \xi_2', \xi_3') = \Psi(\td{\xi}, \xi_1, \xi_2, \xi_3)$.  We first estimate $\|\td{\xi}'\|_{W^{1,p}_\delta(\C^n)}$.

From the definition \eqref{eq:Psi} and the boundedness of $\overline{\p}^{-1}$,
\[
\|\td{\xi}'\|_{W^{1,p}_\delta(\C^n)}
= \| \overline{\p}^{-1}(\Pi - 1)\eta \|_{W^{1,p}_\delta(\C^n)}
\leq C \| (\Pi - 1)\eta \|_{L^p_\delta (\C^n)}.
\]

Since $(\Pi - 1)\overline{\p}\zeta = 0$, we have
\begin{align*}
\| (\Pi - 1)\eta \|_{L^p_\delta(\C^n)}
&= \Bigl\| (\Pi - 1)\Bigl[ \sum_{i=1}^3\frac{\dd\beta_E}{\dd s}\bigl(\td{w}_{i}+\td{\xi}_{i} - \td{L}_{i} + \constantVector_i \bigr)\Bigr]\otimes(\dd s + \iu \dd t) \Bigr\|_{L^p_\delta(\C^n)} \\
&\leq C \sum_{i=1}^3 \Bigl\| \frac{\dd\beta_E}{\dd s}\bigl(\td{w}_{i}+\td{\xi}_{i} - \td{L}_{i} + \constantVector_i \bigr) \Bigr\|_{L^p_\delta(\C^n)} \\
&\leq C \sum_{i=1}^3 \Bigl\| \frac{\dd\beta_E}{\dd s}(\td{w}_{i} - \td{L}_{i}) \Bigr\|_{L^p_\delta(\C^n)}
+ C \sum_{i=1}^3 \Bigl\| \frac{\dd\beta_E}{\dd s}\td{\xi}_{i} \Bigr\|_{L^p_\delta(\C^n)} \\
& + C \sum_{i=1}^3 \Bigl\| \frac{\dd\beta_E}{\dd s}\constantVector_i \Bigr\|_{L^p_\delta(\C^n)}.
\end{align*}

Recall that $\operatorname{supp}\frac{\dd \beta_E}{\dd s} \subset [s_0-2h, s_0]$.  For the first term,
\begin{align}
\Bigl\| \frac{\dd\beta_E}{\dd s}(\td{w}_{i} - \td{L}_{i}) \Bigr\|_{L^p_\delta(\C^n)}
&\leq e^{\frac{\delta s_0}{p}}\Bigl\| \frac{\dd\beta_E}{\dd s}(\td{w}_{i} - \td{L}_{i}) \Bigr\|_{L^p(\C^n)} \notag\\
&\leq C e^{\frac{\delta s_0}{p}} \epsilon s_0^2 \notag\\
&\leq C \epsilon^{\frac{2}{p} + \sigma} \leq \frac{1}{100} r, \label{est:wL}
\end{align}
where the second inequality follows from Lemma~\ref{lemma: asymptotics of w_i} ($\|\td{w}_i - \td{L}_i\|_{\C^n} \leq C\epsilon s^2$ on the support), the third uses $e^{\frac{\delta s_0}{p}} = \epsilon^{-(1-\frac{2}{p}-\sigma)}$ and absorbs the $s_0^2 = O((\ln\epsilon^{-1})^2)$ factor into the exponent.

For the second term, we claim:
\begin{claim}
\begin{align}
\Bigl\| \frac{\dd\beta_E}{\dd s}\td{\xi}_{i} \Bigr\|_{L^p_\delta(\C^n)}
\leq C \epsilon^{\frac{\sigma}{2}} \epsilon^{-1}\|\xi_i\|_{W^{1,p}_{(\epsilon)}(M)}
\end{align}
\end{claim}
\begin{proof}[Proof of Claim]
Let $q$ be the conjugate exponent, $\frac1p+\frac1q=1$, and put
\[
 a_i(s,t)=\dd\Phi_i(t)^{-1}\xi_i(s,t).
\]
By the normalization in $W^{1,p}_{(\epsilon),0}$,
\[
\int_0^1 a_i(0,t)\,\dd t=0.
\]
Apply the anchored thin-ribbon estimate, Proposition~\ref{prop:app-anchored-thin-ribbon}, to $a_i$ on the strip
$R_{s_0}=[0,s_0]\times[0,1]$.  The hypotheses are satisfied componentwise by the
zero-average condition at $s=0$.  Hence, for
$(s,t)\in[s_0-2h,s_0]\times[0,1]$,
\begin{align}
|a_i(s,t)|
&\leq C(s_0+1)^{1/q}\epsilon^{1-\frac2p}
\|a_i\|_{W^{1,p}_{(\epsilon)}(R_{s_0})} \notag\\
&\leq C(s_0+1)^{1/q}\epsilon^{1-\frac2p}
\|\xi_i\|_{W^{1,p}_{(\epsilon)}(M)}. \label{eq:xi-pointwise-thin}
\end{align}
Here the last inequality uses the definition of the $W^{1,p}_{(\epsilon)}$ norm and the uniform bounds on $\dd\Phi_i(t)$ and $\dd\Phi_i(t)^{-1}$.  The same pointwise bound holds for $\xi_i$ after increasing $C$.

Using \eqref{eq:xi-pointwise-thin}, the support condition
$\operatorname{supp}\frac{\dd\beta_E}{\dd s}\subset[s_0-2h,s_0]$, and
$\|\frac{\dd\beta_E}{\dd s}\|_{L^p([s_0-2h,s_0])}\leq C h^{-1/q}$, H\"older's inequality gives
\begin{align}
\Bigl\| \frac{\dd\beta_E}{\dd s}\td{\xi}_{i} \Bigr\|_{L^p_\delta(\C^n)}
&\leq e^{\frac{\delta}{p}s_0}
\Bigl\| \frac{\dd\beta_E}{\dd s}\td{\xi}_{i} \Bigr\|_{L^p(\C^n)} \notag\\
&\leq C\epsilon^{-1}e^{\frac{\delta}{p}s_0}h^{-1/q}
\|\xi_i\|_{L^\infty([s_0-2h,s_0]\times[0,1],M)} \notag\\
&\leq C h^{-1/q}(s_0+1)^{1/q}\epsilon^{\sigma}
\bigl(\epsilon^{-1}\|\xi_i\|_{W^{1,p}_{(\epsilon)}(M)}\bigr). \label{est:xii}
\end{align}
In the second line we used the rescaled-chart relation $|\td{\xi}_i|_{\C^n}=\epsilon^{-1}|\xi_i|_M$, and in the last line we used
$e^{\frac{\delta s_0}{p}}=\epsilon^{-(1-\frac2p-\sigma)}$.
Finally, $h=s_0/4$, so $h^{-1/q}(s_0+1)^{1/q}$ is uniformly bounded for
$s_0\geq1$.  After decreasing $\epsilon_0$ if necessary,
\[
 h^{-1/q}(s_0+1)^{1/q}\epsilon^{\sigma}
 \leq C\epsilon^{\frac\sigma2},
\]
which proves the claim.
\end{proof}

For the third term,
\begin{align}
\Bigl\| \frac{\dd\beta_E}{\dd s}\constantVector_i \Bigr\|_{L^p_\delta(\C^n)}
&\leq \Bigl\| e^{\frac{\delta}{p} s} \frac{\dd\beta_E}{\dd s}\constantVector_i \Bigr\|_{L^p(\C^n)} \notag\\
&\leq e^{\frac{\delta}{p} s_0} \Bigl\| \frac{\dd\beta_E}{\dd s}\constantVector_i \Bigr\|_{L^p(\C^n)} \notag\\
&\leq C e^{\frac{\delta}{p} s_0} h^{-\frac{1}{q}} \|\constantVector_i\| \leq C \epsilon^{-1 + \frac{2}{p} + \sigma} h^{-\frac{1}{q}} r' \leq \frac{1}{100} r. \label{est:constantVector}
\end{align}

Combining \eqref{est:wL}, \eqref{est:xii}, and \eqref{est:constantVector}, and using
$\epsilon^{-1}\|\xi_i\|_{W^{1,p}_{(\epsilon)}(M)}\leq r$ on the ball,
\[
\|\td{\xi}'\|_{W^{1,p}_\delta(\C^n)}
\leq C \epsilon^{\frac{2}{p} + \sigma}
+ C\epsilon^{\frac\sigma2} r
+ C\epsilon^{-1+\frac{2}{p}+\sigma}r'
\leq \frac{3}{100} r.
\]

Now we estimate $\frac{1}{\epsilon}\|\xi_i'\|_{W^{1,p}_{(\epsilon)}(M)}$. 
\begin{align*}
\frac{1}{\epsilon}\|\xi_i'\|_{W^{1,p}_{(\epsilon)}(M)}
&= \frac{1}{\epsilon} \| D_i^{-1} (\Pi_i' - 1) \eta_i \|_{W^{1,p}_{(\epsilon)}(M)} \\
& \leq \frac{1}{\epsilon} C \| (\Pi_i' - 1) \eta_i \|_{L^p_{(\epsilon)}(M)} \\
&\leq \frac{1}{\epsilon} C \Bigl\| (\Pi_i' - 1) \Bigl( \frac{\dd\beta_{V}}{\dd s}\bigl(\lm+{\xi}-{L}_{i} + \lv  \bigr) + F_{i}\Bigr) \Bigr\|_{L^p_{(\epsilon)}(M)} \\
&\leq \frac{1}{\epsilon} C  \epsilon^{\frac{2}{p} - 1} \Bigl\| \frac{\dd\beta_{V}}{\dd s}\bigl(\lm + {\xi}- {L}_{i} + \lv  \bigr) + F_{i} \Bigr\|_{L^p(M)} \\
&\leq  C \epsilon^{\frac{2}{p} - 1}  \Bigl\| \frac{\dd\beta_{V}}{\dd s}\bigl(\tdlm+ \td{\xi}-\td{L}_{i} + \tdlv \bigr) \Bigr\|_{L^p(\C^n)} + C \epsilon^{\frac{2}{p} - 2} \| F_{i} \|_{L^p(M)}.
\end{align*}

We treat each part separately.  First,
\begin{align}
 C\epsilon^{\frac{2}{p}-1} \Bigl\| \frac{\dd\beta_{V}}{\dd s} \td{\xi} \Bigr\|_{L^p(\C^n)}
&\leq C\epsilon^{\frac{2}{p}-1} e^{-\frac{\delta}{p}s_1} \|\td{\xi}\|_{W^{1,p}_\delta(\C^n)} \notag \\
& \leq C \epsilon^{\sigma} \|\td{\xi}\|_{W^{1,p}_\delta(\C^n)}, \label{est:xii2}
\end{align}
where we used $e^{-\frac{\delta}{p}s_1}\epsilon^{\frac{2}{p}-1} \leq \epsilon^\sigma$ by the choice of $s_1$.  Since $C\epsilon^\sigma < \frac{1}{2}$ for small $\epsilon$, this term can be absorbed into the left-hand side.
Similarly,
\begin{align}
C \epsilon^{\frac{2}{p} - 1} \Bigl\| \frac{\dd\beta_{V}}{\dd s} \tdlv \Bigr\|_{L^p(\C^n)} &\leq C  \epsilon^{\frac{2}{p} - 1}  h^{\frac{1}{p}-1}\bigl(\sum_{i,j}|\varsigma_i^j|^p\bigr)^{\frac{1}{p}} \leq C \epsilon^{\frac{2}{p} - 1}  h^{\frac{1}{p}-1} r'  \leq \frac{1}{100}  r, \label{est:zeta}\\
C \epsilon^{\frac{2}{p} - 1} \Bigl\| \frac{\dd\beta_{V}}{\dd s} (\tdlm - \td{L}_{i}) \Bigr\|_{L^p(\C^n)} &\leq C \epsilon^{\frac{2}{p} - 1}  h^{\frac{1}{p}-1} e^{-\pi s_1}
= C \epsilon^{\frac{2}{p} - 1 + \frac{\pi p}{\delta}( 1 - \frac{2}{p} + \sigma)}  h^{\frac{1}{p}-1}
\leq \frac{1}{100} r, \label{est:L}
\end{align}
by choosing $\epsilon$ sufficiently small. 
To get \eqref{est:L}, we claim
\[
\frac{2}{p} - 1 + \frac{\pi p}{\delta}\Bigl(1 - \frac{2}{p} + \sigma\Bigr) > 0.
\]
But this can be guaranteed by choosing $\delta$ sufficiently small.

For the nonlinear term $F_i$, from \eqref{eq:error term} we write $F_i = \overline{\p}_J w_i + H_i + P_i(\xi_i) + Q_i(\xi_i)\cdot\nabla\xi_i$.  For the first term, from Lemma~\ref{lemma: dbar J estimate for edge model},
\begin{align}
\epsilon^{\frac{2}{p} - 2} \|\overline{\p}_J w_i\|_{L^p(M)}
& \leq C \epsilon^{\frac{1}{p}} \leq \frac{C}{K} r \leq \frac{1}{100}  r. \label{est:dbar}
\end{align}
where we choose $K$ sufficiently large.  For the $H_i$ term, using \eqref{est:H} and the norm estimates $\|\xi_i\|_{L^p(M)} \leq \epsilon^{-2/p}\|\xi_i\|_{W^{1,p}_{(\epsilon)}(M)}$ and $\|\nabla\xi_i\|_{L^p(M)} \leq \epsilon^{1-2/p}\|\xi_i\|_{W^{1,p}_{(\epsilon)}(M)}$,
\begin{align}
\epsilon^{\frac{2}{p}-2}\|H_i\|_{L^p(M)}
&\leq C\epsilon^{\frac{2}{p}-2}\bigl(\epsilon^2\|\xi_i\|_{L^p(M)} + \epsilon\|\nabla\xi_i\|_{L^p(M)}\bigr) \notag\\
&\leq C\epsilon^{\frac{2}{p}-2}\bigl(\epsilon^2\cdot\epsilon^{-\frac{2}{p}} + \epsilon\cdot\epsilon^{1-\frac{2}{p}}\bigr)\|\xi_i\|_{W^{1,p}_{(\epsilon)}(M)} \notag\\
&= C\|\xi_i\|_{W^{1,p}_{(\epsilon)}(M)} \leq C\epsilon r \leq \frac{1}{100} r, \label{est:H-ball}
\end{align}
for $\epsilon$ sufficiently small, using $\|\xi_i\|_{W^{1,p}_{(\epsilon)}(M)} \leq \epsilon r$.  For the other two terms in $F_i$, we need to use the following Sobolev embedding, which is proved in the appendix.

\begin{proposition}[Sobolev embedding for degenerating domain]\label{prop:sob}
For any $\xi_i \in W^{1,p}_{(\epsilon)}(M)$,
\[
\|\xi_i\|_{C^0(M)} \leq C \epsilon^{-\frac{1}{p}} \|\xi_i\|_{W^{1,p}_{(\epsilon)}(M)}.
\]
\end{proposition}

Using Proposition~\ref{prop:sob},
\begin{align}
\epsilon^{\frac{2}{p} - 2} \| P_i(\xi_i) \|_{L^p(M)}
&\leq  C \epsilon^{\frac{2}{p} - 1} \Bigl(\int_{\Sigma_i} (|\xi_i|^2)^p\Bigr)^{\frac{1}{p}} \notag\\
&\leq  C \epsilon^{\frac{2}{p} - 1} \|\xi_i\|_{C^0(M)} \|\xi_i\|_{L^p(M)} \notag\\
&\leq  C \epsilon^{\frac{1}{p} - 1} \|\xi_i\|_{W^{1,p}_{(\epsilon)}(M)} \|\xi_i\|_{L^p(M)} \notag\\
&\leq   C \epsilon^{-\frac{1}{p} - 1} \|\xi_i\|_{W^{1,p}_{(\epsilon)}(M)} \cdot  \|\xi_i\|_{W^{1,p}_{(\epsilon)}(M)} \notag\\
&\leq  C \epsilon^{1-\frac{1}{p}} r^2  \leq  \frac{1}{100} r, \label{est:P}
\end{align}
For the part,
\begin{align}
\epsilon^{\frac{2}{p} - 2} \| Q_i(\xi_i)\cdot\nabla\xi_i\|_{L^p(M)}
&\leq C \epsilon^{\frac{2}{p} - 2} \|\xi_i\|_{C^0(M)} \|\nabla\xi_i\|_{L^p(M)} \notag\\
&\leq C \epsilon^{\frac{1}{p} - 2} \|\xi_i\|_{W^{1,p}_{(\epsilon)}(M)} \|\nabla\xi_i\|_{L^p(M)} \notag\\
&\leq C \epsilon^{-\frac{1}{p} - 1} \|\xi_i\|_{W^{1,p}_{(\epsilon)}(M)} \cdot \|\xi_i\|_{W^{1,p}_{(\epsilon)}(M)} \notag\\
&\leq C \epsilon^{1-\frac{1}{p}} r^2 \leq \frac{1}{100} r. \label{est:Q}
\end{align}

Collecting the estimates \eqref{est:xii2}--\eqref{est:Q} we obtain
\[
\epsilon^{-1}\|\xi_i'\|_{W^{1,p}_{(\epsilon)}(M)} \leq \frac{1}{10}r.
\]
Together with the bound for $\|\td{\xi}'\|$, this gives $\|\Psi(\td{\xi},\xi_1,\xi_2,\xi_3)\|_{\W} \leq r$, i.e. $\Psi(B)\subset B$.

\paragraph{(2).  Contraction property.}
Take two elements $(\td{\xi},\xi_1,\xi_2,\xi_3)$ and $(\underline{\td{\xi}},\underline{\xi}_1,\underline{\xi}_2,\underline{\xi}_3)$ in $\mathbb W$, and denote their images under $\Psi$ by $(\td{\xi}',\xi_1',\xi_2',\xi_3')$ and $(\underline{\td{\xi}}',\underline{\xi}_1',\underline{\xi}_2',\underline{\xi}_3')$.  We estimate the difference.

For the interior component,
\begin{align*}
\|\td{\xi}' - \underline{\td{\xi}}' \|_{W^{1,p}_\delta(\C^n)}
&\leq C \| (\Pi-1)(\eta-\underline{\eta}) \|_{L^p_\delta(\C^n)} \\
&\leq C \sum_{i=1}^3 \Bigl\| \frac{\dd\beta_E}{\dd s}(\td{\xi}_{i} - \underline{\td{\xi}}_{i}) \Bigr\|_{L^p_\delta(\C^n)} \\
&\leq C \sum_{i=1}^3 e^{\frac{\delta}{p}s_0}\Bigl\| \frac{\dd\beta_E}{\dd s}(\td{\xi}_{i} - \underline{\td{\xi}}_{i}) \Bigr\|_{L^p(\C^n)} \\
&\leq C \sum_{i=1}^3 e^{\frac{\delta}{p}s_0} \epsilon^{-1} \Bigl\| \frac{\dd\beta_E}{\dd s}({\xi}_{i} - \underline{{\xi}}_{i}) \Bigr\|_{L^p(M)} \\
&\leq C \sum_{i=1}^3 \epsilon^{\frac{\sigma}{2}} \epsilon^{-1}  \|{\xi}_{i} - \underline{{\xi}}_{i}\|_{W^{1,p}_{(\epsilon)}(M)} \\
&\leq \frac{1}{100} \epsilon^{-1} \sum_{i=1}^3 \|{\xi}_{i} - \underline{{\xi}}_{i}\|_{W^{1,p}_{(\epsilon)}(M)}.
\end{align*}

For the exterior components,
\begin{align*}
\epsilon^{-1}\|\xi_i' - \underline{\xi}_i'\|_{W^{1,p}_{(\epsilon)}(M)}
&\leq C \epsilon^{-1} \| (\Pi_i'-1)(\eta_i-\underline{\eta}_i) \|_{L^p_{(\epsilon)}(M)} \\
&\leq C \epsilon^{\frac{2}{p}-2}\| (\Pi_i'-1)(\eta_i-\underline{\eta}_i) \|_{L^p(M)} \\
&\leq C \epsilon^{\frac{2}{p}-2} \Bigl\| \frac{\dd\beta_V}{\dd s}({\xi} - \underline{{\xi}}) \Bigr\|_{L^p(M)}
+ C \epsilon^{\frac{2}{p}-2} \|F_i-\underline{F}_i\|_{L^p(M)}.
\end{align*}

The first term is controlled by
\begin{align*}
\epsilon^{\frac{2}{p}-2} \Bigl\| \frac{\dd\beta_V}{\dd s}({\xi} - \underline{{\xi}}) \Bigr\|_{L^p(M)}
&\leq \epsilon^{\frac{2}{p}-1} \Bigl\| \frac{\dd\beta_V}{\dd s}(\td{\xi} - \underline{\td{\xi}}) \Bigr\|_{L^p(\C^n)} \\
&\leq \epsilon^{\frac{2}{p}-1} e^{-\frac{\delta}{p}s_1} \Bigl\| \frac{\dd\beta_V}{\dd s}(\td{\xi} - \underline{\td{\xi}}) \Bigr\|_{L^p_\delta(\C^n)} \\
&\leq \epsilon^{\sigma} \|\td{\xi} - \underline{\td{\xi}}\|_{W^{1,p}_\delta(\C^n)}.
\end{align*}

For the second term, we have 
\begin{align*}
  \epsilon^{\frac{2}{p}-2} \|F_i-\underline{F}_i\|_{L^p(M)} 
 & \leq \epsilon^{\frac{2}{p}-2}\|H_i(\xi_i) - H_i(\underline{\xi}_i)\|_{L^p(M)} \\
 & + \epsilon^{\frac{2}{p}-2}  \| P_i(\xi_i) - P_i(\underline{\xi}_i)\|_{L^p(M)} \\
 & + \epsilon^{\frac{2}{p}-2} \| Q_i(\xi_i)\cdot\nabla\xi_i - Q_i(\underline{\xi}_i)\cdot\nabla\underline{\xi}_i \|_{L^p(M)}.
\end{align*}

For the $H_i$ difference, since $H_i$ is linear in $\xi_i$, $H_i(\xi_i) - H_i(\underline{\xi}_i) = H_i(\xi_i - \underline{\xi}_i)$, and by \eqref{est:H},
\begin{align*}
\epsilon^{\frac{2}{p}-2}\|H_i(\xi_i-\underline{\xi}_i)\|_{L^p(M)}
&\leq C\epsilon^{\frac{2}{p}-2}\bigl(\epsilon^2\cdot\epsilon^{-\frac{2}{p}} + \epsilon\cdot\epsilon^{1-\frac{2}{p}}\bigr)\|\xi_i-\underline{\xi}_i\|_{W^{1,p}_{(\epsilon)}(M)}\\
&= C\|\xi_i-\underline{\xi}_i\|_{W^{1,p}_{(\epsilon)}(M)}
\leq C\epsilon\cdot\bigl(\epsilon^{-1}\|\xi_i-\underline{\xi}_i\|_{W^{1,p}_{(\epsilon)}(M)}\bigr)\\
&\leq \frac{1}{100}\bigl(\epsilon^{-1}\|\xi_i-\underline{\xi}_i\|_{W^{1,p}_{(\epsilon)}(M)}\bigr),
\end{align*}
for $\epsilon$ sufficiently small.  We use again Proposition~\ref{prop:sob} for the remaining terms:
\begin{align*}
\epsilon^{\frac{2}{p}-2}  \| P_i(\xi_i) - P_i(\underline{\xi}_i)\|_{L^p(M)}
&\leq C \epsilon^{\frac{2}{p}-1} \Bigl(\int_{\Sigma_i} (|\xi_i|+|\underline{\xi}_i|)^{p} |\xi_i-\underline{\xi}_i|^p\Bigr)^{\frac{1}{p}} \\
&\leq C \epsilon^{\frac{2}{p}-1} (\|\xi_i\|_{C^0(M)} + \|\underline{\xi}_i\|_{C^0(M)}) \Bigl(\int_{\Sigma_i} |\xi_i-\underline{\xi}_i|^p\Bigr)^{\frac{1}{p}} \\
&\leq C \epsilon^{\frac{1}{p}-1} (\|\xi_i\|_{W^{1,p}_{(\epsilon)}(M)} + \|\underline{\xi}_i\|_{W^{1,p}_{(\epsilon)}(M)}) \|\xi_i-\underline{\xi}_i\|_{L^p(M)} \\
&\leq C \epsilon^{\frac{1}{p}-1} (\|\xi_i\|_{W^{1,p}_{(\epsilon)}(M)} + \|\underline{\xi}_i\|_{W^{1,p}_{(\epsilon)}(M)}) \cdot \epsilon^{-\frac{2}{p}} \|\xi_i-\underline{\xi}_i\|_{W^{1,p}_{(\epsilon)}(M)} \\
&\leq C \epsilon^{1-\frac{1}{p}} r \cdot \bigl(\epsilon^{-1}\|\xi_i-\underline{\xi}_i\|_{W^{1,p}_{(\epsilon)}(M)}\bigr) \\
&\leq \frac{1}{100} \bigl(\epsilon^{-1}\|\xi_i-\underline{\xi}_i\|_{W^{1,p}_{(\epsilon)}(M)}\bigr).
\end{align*}
Finally,
\begin{align*}
& \epsilon^{\frac{2}{p}-2} \| Q_i(\xi_i)\cdot\nabla\xi_i - Q_i(\underline{\xi}_i)\cdot\nabla\underline{\xi}_i \|_{L^p(M)} \\
 \leq &  C \epsilon^{\frac{2}{p}-2} \| \xi_i \cdot (\nabla\xi_i - \nabla\underline{\xi}_i) \|_{L^p(M)} + C \epsilon^{\frac{2}{p}-2} \| (\xi_i-\underline{\xi}_i) \cdot \nabla\underline{\xi}_i\|_{L^p(M)} \\
\leq & C \epsilon^{\frac{1}{p}-2} \|\xi_i\|_{W^{1,p}_{(\epsilon)}(M)} \|\nabla\xi_i - \nabla\underline{\xi}_i\|_{L^p(M)} + C \epsilon^{\frac{1}{p}-2} \|\xi_i-\underline{\xi}_i\|_{W^{1,p}_{(\epsilon)}(M)} \|\nabla\underline{\xi}_i\|_{L^p(M)} \\
\leq & C \epsilon^{-\frac{1}{p}-1} \|\xi_i\|_{W^{1,p}_{(\epsilon)}(M)} \|\xi_i-\underline{\xi}_i\|_{W^{1,p}_{(\epsilon)}(M)}  + C \epsilon^{-\frac{1}{p}-1} \|\xi_i-\underline{\xi}_i\|_{W^{1,p}_{(\epsilon)}(M)} \cdot \|\underline{\xi}_i\|_{W^{1,p}_{(\epsilon)}(M)} \\ 
\leq & C \epsilon^{1-\frac{1}{p}} \epsilon^{-1} \|\xi_i\|_{W^{1,p}_{(\epsilon)}(M)} \epsilon^{-1} \|\xi_i-\underline{\xi}_i\|_{W^{1,p}_{(\epsilon)}(M)}  \\ 
\leq & \frac{1}{100} \bigl(\epsilon^{-1}\|\xi_i-\underline{\xi}_i\|_{W^{1,p}_{(\epsilon)}(M)}\bigr).
\end{align*}

Putting these estimates together shows that $\Psi$ is a contraction.  This completes the proof of Theorem~\ref{thm:contraction}.
\end{proof}

By the contraction mapping theorem, for every admissible $(\tdlv,\constantVector)$ there exists a unique fixed point $(\td\xi,\xi_1,\xi_2,\xi_3)$ of $\Psi$.  Substituting this fixed point into $\eta$ and $\eta_i$ makes them functions of $\tdlv$ and $\constantVector$ only.

It remains to choose $\tdlv$ and $\constantVector$ so that the projection conditions $\Pi\eta = 0$ and $\Pi_i'\eta_i = 0$ hold.  We therefore define the obstruction section
\[
\obstructionSection: B_V(r') \times B_E(r') \longrightarrow Y_1 \times Y_2 \times Y_3 \times Z_1 \times Z_2 \times Z_3 \simeq \R^{2n} \times \R^n
\]
by
\[
\obstructionSection(\tdlv, \constantVector) = (\Pi_1(\eta),\; \Pi_2(\eta),\; \Pi_3(\eta),\; \Pi_1'(\eta_1),\; \Pi_2'(\eta_2),\; \Pi_3'(\eta_3)).
\]

\subsection{The zeroes of \texorpdfstring{$\mathfrak s$}{s}}
In this section, we show that $\mathfrak s$ has one zero counted with signs. To be precise, we have:
\begin{theorem}\label{thm: degree one}
The obstruction section $\obstructionSection$ has no zero on $\p (B_V(r') \times B_E(r'))$. The normalized boundary map $\p (B_V(r') \times B_E(r')) \to S^{3n-1} \subset \R^{3n}$ given by
$$v \mapsto \frac{\obstructionSection(v)}{\|\obstructionSection(v)\|}$$
has degree one.
\end{theorem}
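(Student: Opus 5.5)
Our plan is to show that $\obstructionSection$ is a small perturbation of an explicit linear map $\obstructionSection_0$ that is an isomorphism, represented by the identity (up to positive scaling) in the bases $\{\tfrac{\dd\alpha_V^i}{\dd s}e_i^j\}$ of $Y_i$ and $\{\tfrac{\dd\alpha_E^i}{\dd s}v_i^j\}$ of $Z_i$. Then $\obstructionSection_t=(1-t)\obstructionSection_0+t\obstructionSection$, $t\in[0,1]$, does not vanish on $\p(B_V(r')\times B_E(r'))$. Since the boundary map of a linear isomorphism with positive determinant has degree one, homotopy invariance of degree gives the statement.

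\textbf{Step 1: isolating the linear part.} Let $(\td\xi,\xi_1,\xi_2,\xi_3)$ be the fixed point from Theorem~\ref{thm:contraction}. Since $\Pi\,\overline{\p}\td\xi=0$, we have
\[
\Pi\eta=\Pi\,\overline{\p}\tdlv+\Pi\Bigl[\sum_i\tfrac{\dd\beta_E}{\dd s}(\td w_i-\td L_i+\td\xi_i+\constantVector_i)\Bigr].
\]
The term $\overline{\p}\tdlv=\sum\varsigma_i^j\tfrac{\dd\alpha_V^i}{\dd s}e_i^j\otimes(\dd s+\iu\dd t)$ already lies in $Y_1\oplus Y_2\oplus Y_3$, so $\Pi_i\overline{\p}\tdlv$ has coordinates exactly $(\varsigma_i^j)_j$. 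Similarly, $\Pi_i'\eta_i$ contains $\Pi_i'D_i\constantVector_i$. Because $\constantVector_i=\alpha_E^i v$ is constant in $t$ and tangent to $M$, the term $D_i\constantVector_i$ equals $\tfrac{\dd\alpha_E^i}{\dd s}v$ plus the linearized gradient term $\nabla_v\grad_{\epsilon(f_i-f_{i-1})}$, which is of order $\epsilon$. The leading part lies in $Z_i$ with coordinates $(\constantVector_i^k)$. The remaining $\epsilon$-small part, together with $\Pi_i'$ of the image part, is treated as error. So we set $\obstructionSection_0(\tdlv,\constantVector)=(\varsigma,\constantVector)$.

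\textbf{Step 2: error estimates on the boundary.} On $\p(B_V(r')\times B_E(r'))$ some coordinate block has $\ell^p$-norm exactly $r'=K\epsilon^{1-1/p}$. We need $|\obstructionSection-\obstructionSection_0|\le\tfrac12|\obstructionSection_0|$ there, and in fact we will aim for an error of $o(r')$ uniformly. The error terms are bounded using the estimates already obtained in the proof of Theorem~\ref{thm:contraction}, combined with boundedness of the projections $\Pi_i$ and $\Pi_i'$:
\begin{enumerate}
\item The interior error $\Pi[\tfrac{\dd\beta_E}{\dd s}(\cdots)]$ is controlled by \eqref{est:wL}, \eqref{est:xii}, \eqref{est:constantVector}.
\item The edge error $\Pi_i'[\tfrac{\dd\beta_V}{\dd s}(\cdots)+F_i]$ is controlled by \eqref{est:xii2}--\eqref{est:Q}.
\end{enumerate}
These bounds must then be converted into coordinates, since the bases are normalized with $\|e_i^j\|_g=\|v_i^j\|_g=\epsilon$ and the norms differ by chart scaling.

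\textbf{Main obstacle.} The difficulty is that the contraction estimates bound the errors by $Cr$ with $r=K\epsilon^{1/p}$. This is much larger than $r'$ after accounting for norm conventions, so those estimates cannot be reused as stated. Instead we will re-derive the error bounds relative to $r'$, for example the bound $\epsilon^{1/p}$ from $\overline{\p}_Jw_i$ in \eqref{est:dbar}. Two tools should help here:
\begin{enumerate}
\item Projections onto finite-dimensional spaces can be computed by pairing against the explicit cokernel elements of Proposition~\ref{prop:cokernel} and Proposition~\ref{prop: fredholm index of edge}, which are localized. This lets us exploit the fact that, for example, $\tfrac{\dd\beta_E}{\dd s}(\td w_i-\td L_i)$ is supported far out, where Type 1/2 cokernel pairings give extra decay.
\item If the absolute error terms independent of $(\tdlv,\constantVector)$ cannot be made $o(r')$, we will instead rescale $r'$ upward within the range permitted by Theorem~\ref{thm:contraction}, using the slack in $\sigma$ and $\delta$.
\end{enumerate}
Once the error is shown to be $o(r')$ on the boundary, the straight-line homotopy is nonvanishing and the degree equals $\deg\obstructionSection_0=1$.
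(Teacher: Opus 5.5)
Your architecture matches the paper's. You use the same $\obstructionSection_0$ (the identity in the bases $\{\frac{\dd\alpha_V^i}{\dd s}e_i^j\}$ and $\{\frac{\dd\alpha_E^i}{\dd s}v_i^j\}$), and your straight-line homotopy is essentially the paper's $\obstructionSection_\tau$, which multiplies all error terms by $1-\tau$ with the fixed point frozen. However, the proposal stops exactly where the proof has content. Step~2 is only announced, and neither of your two tools controls the decisive term $\Pi_j\bigl(\frac{\dd\beta_E}{\dd s}\xi_i\bigr)$ on $\p B_V(r')\times B_E(r')$. From membership in the ball alone, Proposition~\ref{prop:app-anchored-thin-ribbon} gives, in $\C^n$ units, $|\xi_i|\le C(s_0+1)^{1/q}\epsilon^{1-2/p}r\sim s_0^{1/q}r'$ on $[s_0-2h,s_0]$. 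This is not smaller than the main term, which is of order $r'$. Rescaling $K$ or $r'$ cannot fix this: $\xi_i$ is itself driven by $\frac{\dd\beta_V}{\dd s}\zeta$ (see \eqref{est:zeta}), so $r$ must grow with $r'$ and the ratio stays of order one. Pairing with the Type~1 cokernel elements gives no decay either, since they are constant along the ends and just return the average of $\xi_i$ over the gluing region. The missing idea is localization of the edge correction. The $r'$-dependent forcing in $D_i\xi_i=(\Pi_i'-1)\eta_i$ sits near $s_1$, and $\xi_i$ decays exponentially from there back to $s_0$. The paper uses an exponential decay estimate (Lemma~\ref{lem:exp-loc}) to gain a factor $e^{-\lambda_1(s_1-s_0)}=\epsilon^{2p\sigma\lambda_1/\delta}$; this is precisely what the separation $s_1-s_0=\frac{2p\sigma}{\delta}\ln\epsilon^{-1}$ is for. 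Your worry about $\overline{\p}_J w_i$, by contrast, has a simple resolution: $\|\overline{\p}_J w_i\|_{L^p(M)}\le C\epsilon^{2-1/p}=(C/K)\,\epsilon r'$ with $C$ independent of $K$, so a large $K$ absorbs it, and $H_i,P_i,Q_i$ carry extra powers of $\epsilon$.

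Second, your stated criterion (all errors $o(r')$, so that $|\obstructionSection-\obstructionSection_0|\le\frac12|\obstructionSection_0|$) cannot be met for the cross term $\Pi\bigl(\frac{\dd\beta_E}{\dd s}\varpi_i\bigr)$ in the $Y$-components. Since $(\beta_E-\alpha_V^i)\varpi_i$ is compactly supported with real boundary values, $\frac{\dd\beta_E}{\dd s}\varpi_i\equiv\frac{\dd\alpha_V^i}{\dd s}\varpi_i$ modulo $\op{im}\overline{\p}$. Subtracting $\overline{\p}$ of $\alpha_V^i\varpi_i-\sum_k\alpha_V^k y_k+u$, with $u\in\R^n$ constant, shows that its $Y_k$-component is $\frac{\dd\alpha_V^k}{\dd s}y_k$, where $y_k=u+\delta_{ki}\varpi_i\in T_{m_0}\mathscr D_{\cP{p}_k}$. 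This is nonzero whenever $\varpi_i\neq 0$, and its size is comparable to $|\varpi_i|\sim r'$ (larger if the spaces $T_{m_0}\mathscr D_{\cP{p}_k}$ meet at small angles). The $L^p$-smallness $Ch^{-1/q}r'$ of $\frac{\dd\beta_E}{\dd s}\varpi_i$, which the paper's Boundary~1 estimate also relies on, does not survive $\Pi_j$: on such terms, $\Pi_j$ loses a factor of order $h^{1/q}$ in unweighted $L^p$. So the linearization is block triangular, $(\zeta,\varpi)\mapsto(\zeta+A\varpi,\varpi)$, with $A$ of order one. This does not spoil the theorem, but your homotopy argument has to use it. Suppose $(1-t)\obstructionSection_0+t\obstructionSection$ vanished at a boundary point. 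The $Z$-components would give $|\varpi|=o(r')$. (The reverse cross term $\Pi_i'\bigl(\frac{\dd\beta_V}{\dd s}\zeta_i\bigr)$ is small only because $\zeta_i\in T_{m_0}\mathscr D_{\cP{p}_i}$ pairs to relative order $\epsilon s_1$ with the edge cokernel elements, whose value at $s=0$ is orthogonal to $T_{m_0}\mathscr D_{\cP{p}_i}$.) Hence $A\varpi=o(r')$, and then the $Y$-components give $|\zeta|=o(r')$, contradicting $(\zeta,\varpi)\in\p(B_V(r')\times B_E(r'))$. The degree is then that of a block-triangular map with identity diagonal blocks, namely one.
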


We first consider the so-called linearized obstruction section $\obstructionSection_0$ defined by

\begin{align*}
  \obstructionSection_0(\tdlv, \constantVector) = & \left(\Pi_1(\overline{\p}\tdlv), \Pi_2(\overline{\p}\tdlv), \Pi_3(\overline{\p}\tdlv), \Pi_1'(D_1 \constantVector_1), \Pi_2'(D_2 \constantVector_2), \Pi_3'(D_3 \constantVector_3) \right) \\ 
  = & \left(\overline{\p}\tdlv_1,\overline{\p}\tdlv_2,\overline{\p}\tdlv_3, D_1 \constantVector_1, D_2 \constantVector_2, D_3 \constantVector_3 \right).
\end{align*}

With respect to the bases
\[
\{\alpha_V^i e_i^j \mid i=1,2,3,\ j=1,\dots,\ind(\cP{p}_i)\}
\]
for $B_V(r')$,
\[
\{\alpha_E^i v_i^j \mid i=1,2,3,\ j=1,\dots,n-\ind(\cP{p}_i)\}
\]
for $B_E(r')$,
\[
\bigl\{\tfrac{\dd\alpha_V^i}{\dd s} e_i^j \mid j=1,\dots,\ind(\cP{p}_i)\bigr\}
\]
for $Y_i$, and
\[
\bigl\{\tfrac{\dd\alpha_E^i}{\dd s} v_i^j \mid j=1,\dots,n-\ind(\cP{p}_i)\bigr\}
\]
for $Z_i$,
the map $\obstructionSection_0$ is represented by the identity map. In particular,
$\obstructionSection_0$ has degree one.

Now we show that $\obstructionSection$ is homotopic to $\obstructionSection_0$ through nonvanishing sections on $$\p (B_V(r') \times B_E(r')).$$ 
Define the homotopy $$\obstructionSection_\tau(\tdlv, \constantVector) = (\Pi_1(\eta_V^\tau), \Pi_2(\eta_V^\tau), \Pi_3(\eta_V^\tau), \Pi_1'(\eta_E^{1,\tau}), \Pi_2'(\eta_E^{2,\tau}), \Pi_3'(\eta_E^{3,\tau})),$$ 
where $$\eta_V^\tau =  \overline{\p}\td{\lv}
+(1-\tau)\Bigl[
\sum_{i=1}^3\frac{\dd\beta_E}{\dd s}\bigl(\td{w}_{i}+\td{\xi}_{i}  - \td{L}_{i} + \constantVector_i \bigr)
+F_{0}\Bigr]\otimes(\dd s + \iu \dd t)$$ 
and
$$\eta_E^{i,\tau} = 
\left[ D_i \constantVector_i + (1 - \tau) \left(\frac{\dd\beta_{V}}{\dd s}\bigl(\tdlm+\td{\xi}-\td{L}_{i} + \tdlv \bigr)
+F_{i} \right) \right]\otimes(\dd s + \iu \dd t),$$
and $(\td{\xi}, \xi_1, \xi_2, \xi_3)$ is the fixed point of $\Psi$ for given $\tdlv$ and $\constantVector$.

\begin{proposition}\label{prop: homotopy}
$\obstructionSection_\tau$ is nonvanishing on $\p (B_V(r') \times B_E(r'))$ for all $\tau \in [0,1]$.
\end{proposition}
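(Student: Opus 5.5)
The plan is the standard "linear part dominates the error on the boundary" argument. Write $\obstructionSection_\tau = \obstructionSection_0 + (1-\tau)\mathcal{R}$, where $\mathcal{R}(\tdlv,\constantVector)$ collects the projections of the non-linear-in-parameter terms:
\[
\mathcal{R}_V = \Pi\Bigl[\Bigl(\sum_{i}\tfrac{\dd\beta_E}{\dd s}\bigl(\td{w}_i+\td\xi_i-\td{L}_i+\constantVector_i\bigr)+F_0\Bigr)\otimes(\dd s+\iu\,\dd t)\Bigr]
\]
and
\[
\mathcal{R}_{E,i} = \Pi_i'\Bigl[\Bigl(\tfrac{\dd\beta_V}{\dd s}\bigl(\tdlm+\td\xi-\td{L}_i+\tdlv\bigr)+F_i\Bigr)\otimes(\dd s+\iu\,\dd t)\Bigr].
\]
Here $(\td\xi,\xi_i)$ is the fixed point supplied by Theorem~\ref{thm:contraction}, and $F_0=0$ by Assumption~\ref{assumption: flat}. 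Since $\obstructionSection_0$ is the identity in the chosen bases, on $\p(B_V(r')\times B_E(r'))$ we have $\|\obstructionSection_0(\tdlv,\constantVector)\|\geq c\,r'$ in the coefficient $\ell^p$-norm on each block, with $c$ independent of $\epsilon$. It therefore suffices to prove $\|\mathcal{R}\|\leq \tfrac{c}{2}r'$ uniformly on the ball, for all small $\epsilon$, and then $\obstructionSection_\tau\neq 0$ for every $\tau\in[0,1]$.

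To bound $\mathcal{R}$, I would reuse the estimates already obtained in the proof of Theorem~\ref{thm:contraction}, noting that $\Pi_i$ and $\Pi_i'$ are bounded. One point needs care: the norms on $Y_i$ and $Z_i$ must be compared with the coefficient norms. A coefficient vector $\varsigma$ corresponds to $\sum\varsigma^j\frac{\dd\alpha_V^i}{\dd s}e_i^j$, and since $\alpha_V^i$ is a fixed cutoff, its $L^p_\delta$ norm is comparable to $|\varsigma|$ (with $\|e_i^j\|_g=\epsilon$ matching the $\C^n$ chart). For $Z_i$ the analogous comparison uses $\alpha_E^i$, whose derivative lives at $s\approx\epsilon^{-1}$. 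In the $L^p_{(\epsilon)}(M)$ norm this contributes the factor $\epsilon^{2/p-1}\cdot\epsilon$, so after multiplying by the $\epsilon^{-1}$ convention in $\W$ the comparison is again $O(1)$. With these comparisons, the vertex remainder is controlled by \eqref{est:wL}, the Claim, and \eqref{est:constantVector}. The $\constantVector$ term is the only one linear in the parameters, and it carries the small factor $\epsilon^{-1+2/p+\sigma}h^{-1/q}\ll 1$. Since $\frac{\dd\beta_E}{\dd s}$ is supported at $s\sim s_0$, far from the support of $\frac{\dd\alpha_V^i}{\dd s}$, I would also check that its projection to $Y_i$ gains an additional factor. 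I would attempt this by pairing with the explicit cokernel elements of Proposition~\ref{prop:cokernel}, which are bounded on the ends, so that the projection is bounded by the $L^1$-type pairing.

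For the edge remainder, the terms $\tfrac{\dd\beta_V}{\dd s}\tdlv$, $\tfrac{\dd\beta_V}{\dd s}(\tdlm-\td L_i)$, $\tfrac{\dd\beta_V}{\dd s}\td\xi$ and $F_i$ are bounded by \eqref{est:zeta}, \eqref{est:L}, \eqref{est:xii2} and \eqref{est:dbar}--\eqref{est:Q}, using $\|(\td\xi,\xi_i)\|_\W\leq r=K\epsilon^{1/p}$. The comparison I then need is $r\ll r'$, i.e.\ $K\epsilon^{1/p}\ll K\epsilon^{1-1/p}$. This fails for $p>2$, so it is the main obstacle: the contraction estimates are phrased relative to $r$, not $r'$. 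I would try first to exploit that the projections $\Pi$ and $\Pi'$ see only a finite-dimensional pairing. Concretely, pair each remainder term against the dual cokernel elements. For $\Pi_i'$, the dual is the $t$-independent adjoint solution $b$ from Proposition~\ref{prop: fredholm index of edge}, evaluated via $\langle b(0),v\rangle$. Doing so should give bounds in $L^1$-type norms, which are smaller by powers of $h$ and of $\epsilon$. For instance, $\frac{\dd\beta_V}{\dd s}\tdlv$ pairs with $b$ only through $\int\frac{\dd\beta_V}{\dd s}=-1$ times $\tdlv$ projected onto $T^\perp\mathscr D$, which vanishes since $\tdlv_i\in T\mathscr D_{\cP{p}_i}$. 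Similarly, $\tfrac{\dd\beta_E}{\dd s}\constantVector_i$ paired with the Type~1 and Type~2 modes produces a component that is cancelled by the transverse decomposition. If these sharper pairing estimates still fall short, the fallback is to rescale, choosing $r'$ larger, e.g.\ $r'=K'\epsilon^{1/p}$ with $K'\gg K$, and re-verify that Theorem~\ref{thm:contraction} still holds. Once $\|\mathcal R\|<c\,r'/2$ is established on the boundary, the proposition follows, and homotopy invariance of degree then yields Theorem~\ref{thm: degree one}.
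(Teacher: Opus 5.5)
Your overall scheme (main term versus remainder on the two boundary pieces) is the paper's, but the proposal does not close. The bound $\|\mathcal R\|\le\frac c2 r'$ is left open, and part of your diagnosis of the obstacle rests on a scaling slip.

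\textbf{Edge blocks.} There is no $r$-versus-$r'$ problem here. Your factor $\epsilon^{-1}\cdot\epsilon^{2/p-1}\cdot\epsilon=\epsilon^{2/p-1}$ is not $O(1)$. In the $\W$-scaling, the main term $\Pi_i'(D_i\constantVector_i)$ has size comparable to $\epsilon^{2/p-1}r'=r$; this is exactly why the paper takes $r=K\epsilon^{1/p}$ and $r'=K\epsilon^{1-1/p}$. The bounds \eqref{est:xii2}, \eqref{est:zeta}, \eqref{est:L} and \eqref{est:dbar}--\eqref{est:Q} are all $\le\frac1{100}r$ (with $K$ large for the $\overline\p_Jw_i$ term), so they are already on the right scale. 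In $L^p(M)$ this is the paper's Boundary~2 comparison of $\frac12(\frac16)^{1/p}\epsilon r'$ with $\frac1{100}\epsilon r'$. Your fallback $r'=K'\epsilon^{1/p}$ would in fact break \eqref{est:zeta} and \eqref{est:constantVector}.

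\textbf{Vertex block, the $\xi_i$-term.} This is where the genuine difficulty lies, and your proposed tools do not resolve it.
\begin{enumerate}
\item The factor $\epsilon^{-1+2/p+\sigma}h^{-1/q}$ you call small actually diverges, since $\sigma<1-\frac2p$.
\item The Claim gives only $C\epsilon^{\sigma/2}r$ for the $\xi_i$-term. Since $r/r'=\epsilon^{2/p-1}$, this is not $o(r')$.
\item Pairing with the bounded cokernel modes, combined with \eqref{eq:xi-pointwise-thin}, gives only $Cs_0^{1/q}r'$.
\end{enumerate}
The missing ingredient is the exponential localization the paper uses. On $[0,s_1]\times[0,1]$, $\xi_i$ solves a homogeneous equation, because its source lies in $\{s\ge s_1\}$. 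Hence on $[s_0-2h,s_0]$ it carries a factor $e^{-\lambda_1(s_1-s_0)}=\epsilon^{2p\sigma\lambda_1/\delta}$, which beats any fixed power once $\delta$ is small. Concretely, under Assumptions~\ref{assumption: flat} and~\ref{assumption: linear} one has $D_i=\p_s+J_0\p_t$ there. The normalization $\int_0^1 a(0,t)\,\dd t=0$ kills the $t$-independent mode, and the remaining modes decay at rate at least $\pi$.

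\textbf{Vertex block, the $\constantVector$-term.} Its contribution is not ``cancelled by the transverse decomposition''. The section $(\beta_E-\alpha_V^i)\constantVector_i$ is compactly supported, real, and lies in $W$. So $\frac{\dd\beta_E}{\dd s}\constantVector_i$ and $\frac{\dd\alpha_V^i}{\dd s}\constantVector_i$ differ by an element of $\op{im}\overline\p|_W$, and they have the same image under $\Pi$. Now decompose $(\R^n)^3=\Delta\oplus\bigoplus_k T_{m_0}\mathscr{D}_{\cP{p}_k}$, with $\Delta$ the diagonal. Because $\constantVector_i\notin T_{m_0}\mathscr{D}_{\cP{p}_i}$, the vector $(0,\dots,\constantVector_i,\dots,0)$ has a nonzero $\bigoplus_k T_{m_0}\mathscr{D}_{\cP{p}_k}$-component comparable to $|\constantVector_i|$.

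So this term is of order $r'$ with no small factor, and $\|\mathcal R_V\|\le\frac c2 r'$ fails in general. The same identity shows that an unweighted bound like $Ch^{-1/q}r'$, which the paper uses for this term in Boundary~1, does not by itself control its $\Pi_j$-projection.

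\textbf{What works instead.} Use the triangular structure you half-observed. The edge blocks see $\tdlv$ only through $\langle b(0),\tdlv_i\rangle=0$. So, up to small errors, the linear part has the form $\bigl(\begin{smallmatrix}I&A\\0&I\end{smallmatrix}\bigr)$. Suppose $\obstructionSection_\tau$ had a zero on $\p(B_V(r')\times B_E(r'))$. Taking $K$ large relative to $\|A\|$, the edge blocks force $|\constantVector|\ll r'$. The vertex block then forces $|\tdlv|\ll r'$, which contradicts the boundary condition.
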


\begin{proof}
Note that $$\p (B_V(r') \times B_E(r')) = [\p(B_V(r')) \times B_E(r')] \cup [B_V(r') \times \p(B_E(r'))].$$
To show $\obstructionSection_\tau \neq 0$ we show that over $\p(B_V(r')) \times B_E(r')$, the component $\Pi_j(\eta_V^\tau) \neq 0$ for some $j$; over $B_V(r') \times \p(B_E(r'))$, the component $\Pi_i'(\eta_E^{i,\tau}) \neq 0$ for some $i$.


\subsection*{Boundary 1: Over $\p(B_V(r')) \times B_E(r')$}
Note that 
\begin{align*}
\p(B_V(r')) &= \left(\p(B_V^1(r')) \times B_V^2(r') \times B_V^3(r')\right) \\
&\quad\cup \left(B_V^1(r') \times \p(B_V^2(r')) \times B_V^3(r')\right) \\
&\quad\cup \left(B_V^1(r') \times B_V^2(r') \times \p(B_V^3(r'))\right),
\end{align*}
where 
\[
B_V^i(r') =
\left\{
\tdlv_i = \sum_{j=1}^{\ind(\cP{p}_i)} \tdlv_i^j\,\alpha_V^i e_i^j
  \in \parameterSpace_V^i \,\bigg|\,
  \biggl(\sum_{j=1}^{\ind(\cP{p}_i)} |\tdlv_i^j|^p\biggr)^{1/p} = r'
\right\}.
\]
For any $(\tdlv, \constantVector) \in \p(B_V(r')) \times B_E(r')$, we have 
\[
\left(\sum_{k=1}^{\ind(\cP{p}_j)} |\tdlv_j^k|^p \right)^{\frac{1}{p}} = r', \qquad \text{for some } j \in \{1, 2, 3\}.
\]
We estimate $\Pi_j(\eta_V^\tau)$ in $L^p(\C^n)$ norm.

From the homotopy definition,
\[
\eta_V^\tau = \overline{\p}\td{\lv}
+ (1-\tau)\Bigl[\sum_{i=1}^3\frac{\dd\beta_E}{\dd s}\bigl(\td{w}_{i}+\xi_{i} - \td{L}_{i} + \constantVector_i \bigr)\Bigr]\otimes(\dd s + \iu\,\dd t),
\]
so by the triangle inequality,
\begin{align}
\|\Pi_j(\eta_V^\tau)\|_{L^p(\C^n)}
&\geq \|\Pi_j(\overline{\p}\td{\lv})\|_{L^p(\C^n)}
- C\sum_{i=1}^3\Bigl\|\Pi_j\Bigl(\frac{\dd\beta_E}{\dd s}(\td{w}_i - \td{L}_i)\Bigr)\Bigr\|_{L^p(\C^n)} \notag\\
&\quad - C\sum_{i=1}^3\Bigl\|\Pi_j\Bigl(\frac{\dd\beta_E}{\dd s}\xi_i\Bigr)\Bigr\|_{L^p(\C^n)}
- C\sum_{i=1}^3\Bigl\|\Pi_j\Bigl(\frac{\dd\beta_E}{\dd s}\constantVector_i\Bigr)\Bigr\|_{L^p(\C^n)}.
\label{eqn: emmy}
\end{align}

\medskip
\noindent\textbf{Main term.}
 $$\|\Pi_j(\overline{\p}\td{\lv})\|_{L^p(\C^n)} = \|\overline{\p}\td{\lv}_j\|_{L^p(\C^n)} \geq \left(\frac{1}{6}\right)^\frac{1}{p}r',$$
where we use the fact that over a region of length $1/2$ in $s$, $|\frac{\dd\alpha_E^j}{\dd s}| = 1$.

\medskip
\noindent\textbf{Error term $\td{w}_i - \td{L}_i$.}
From \eqref{est:wL}, $\|\frac{\dd\beta_E}{\dd s}(\td{w}_i-\td{L}_i)\|_{L^p(\C^n)} \leq C\epsilon^{2/p+\sigma} = o(r')$.

\medskip
\noindent\textbf{Error term $\constantVector_i$.}
Since $|\frac{\dd\beta_E}{\dd s}| \leq \frac{C}{h}$ on a support of length $2h$ in $s$, and $|\constantVector_i|_{\C^n} \leq r'$:
\[
\left\|\frac{\dd\beta_E}{\dd s}\constantVector_i\right\|_{L^p(\C^n)}
\leq \frac{C}{h}\cdot(2h)^{1/p}\cdot r'
= Ch^{-1/q}r'.
\]
By the definition of $h =  O(\ln\epsilon^{-1})$, this is $o(r')$.

\medskip
\noindent\textbf{Error term $\Pi_j\!\left(\frac{\dd\beta_E}{\dd s}\xi_i\right)$ — exponential decay.}
Note that $D_i\xi_i = g_i := (\Pi_i'-1)\eta_i$. By the exponential decay estimate for Cauchy--Riemann type operators \cite{robbin2001asymptotic}, since $g_i$ is supported in $[s_1-2h, s_1+2h]$:
\[
\|\xi_i\|_{L^p([s_0-2h,s_0]\times[0,1],\,M)}
\leq Ce^{-\lambda_1(s_1-s_0)}\|g_i\|_{L^p([s_1-2h,s_1+2h]\times[0,1],\,M)},
\]
where $\lambda_1 > 0$ is the spectral gap of the asymptotic operator of $D_i$.  Using $\|\cdot\|_{L^p(\C^n)} = \epsilon^{-1}\|\cdot\|_{L^p(M)}$, $e^{-\lambda_1(s_1-s_0)} = \epsilon^{2p\sigma\lambda_1/\delta}$, and $$\|g_i\|_{L^p(M)} \leq C \|\xi_i\|_{W^{1,p}(M)} \leq C\epsilon^{-2/p}\cdot \|\xi_i\|_{W^{1,p}_{(\epsilon)}(M)}$$
\begin{align}
\left\|\Pi_j\!\left(\frac{\dd\beta_E}{\dd s}\xi_i\right)\right\|_{L^p(\C^n)}
&\leq C\|\xi_i\|_{L^p([s_0-2h,s_0]\times[0,1],\,\C^n)} \notag\\
&= C\epsilon^{-1}\|\xi_i\|_{L^p([s_0-2h,s_0]\times[0,1],\,M)} \notag\\
&\leq C\epsilon^{-1}\cdot\epsilon^{\frac{2p\sigma\lambda_1}{\delta}}\cdot\epsilon^{-\frac{2}{p}}\cdot \|\xi_i\|_{W^{1,p}_{(\epsilon)}(M)} \notag\\
& = o(r'),
\end{align}
for $\delta$ small enough that $\frac{2p\sigma\lambda_1}{\delta} - \frac{2}{p} > 1 - \frac{1}{p}$.

\medskip
\noindent\textbf{Summary of the boundary 1 case:}
By the estimate above, $\|\Pi_j(\eta_V^\tau)\|_{L^p(\C^n)} \geq \frac{1}{2}\left(\frac{1}{6}\right)^\frac{1}{p}r' - o(r') > 0$ for $\epsilon$ sufficiently small, so $\Pi_j(\eta_V^\tau) \neq 0$ and $\obstructionSection_\tau \neq 0$.

\subsection*{Boundary 2: Over $B_V(r') \times \p(B_E(r'))$}

Note that 
\begin{align*}\p(B_E(r')) &= \left(\p(B_E^1(r')) \times B_E^2(r') \times B_E^3(r')\right) \\
&\quad\cup \left(B_E^1(r') \times \p(B_E^2(r')) \times B_E^3(r')\right) \\
&\quad\cup \left(B_E^1(r') \times B_E^2(r') \times \p(B_E^3(r'))\right),
\end{align*}
where
\[
B_E^i(r') = \left\{\constantVector_i = \sum_{j=1}^{n-\ind(\cP{p}_i)} \constantVector_i^j\,\alpha_E^i v_i^j \in \parameterSpace_E^i \,\bigg|\, \bigl(\sum_{j=1}^{n-\ind(\cP{p}_i)} |\constantVector_i^j|^p\bigr)^{1/p} = r'\right\}.
\]

For any $(\tdlv, \constantVector) \in B_V(r') \times \p(B_E(r'))$, we have 
\[
\left(\sum_{j=1}^{n-\ind(\cP{p}_i)} |\constantVector_i^j|^p\right)^{1/p} = r'
\]
from some $i \in \{1, 2, 3\}$. We estimate  $\Pi_i'(\eta_E^{i,\tau})$ in $L^p(M)$-norm.

From the homotopy definition,
\[
\eta_E^{i,\tau} = \left[D_i\constantVector_i + (1-\tau)\left(\frac{\dd\beta_{V}}{\dd s}(\tdlm+\td{\xi}-\td{L}_{i}+\tdlv) + F_i\right)\right]\otimes(\dd s + \iu\,\dd t).
\]
By the triangle inequality,
\begin{align}
\|\Pi_i'(\eta_E^{i,\tau})\|_{L^p(M)}
&\geq \|\Pi_i'(D_i\constantVector_i)\|_{L^p(M)} \notag\\
&\quad - C\left\|\frac{\dd\beta_V}{\dd s}(\tdlm - \td{L}_i)\right\|_{L^p(M)}
- C\left\|\frac{\dd\beta_V}{\dd s}\td{\xi}\right\|_{L^p(M)} \notag\\
&\quad - C\left\|\frac{\dd\beta_V}{\dd s}\tdlv\right\|_{L^p(M)}
- C\|F_i\|_{L^p(M)}.
\end{align}

\medskip
\noindent\textbf{Main term.}
\[
\|\Pi_i'(D_i\constantVector_i)\|_{L^p(M)}= \| D_i \constantVector_i \|_{L^p(M)} \geq \frac{1}{2} \left\|\sum \constantVector_i^j\frac{\dd\alpha_E^i}{\dd s}v_i^j\right\|_{L^p(M)} \geq \frac{1}{2}(1/6)^{1/p}\epsilon r',
\]
by Lemma~\ref{lemma: linearization}.

\medskip
\noindent\textbf{Error terms.}
From \eqref{est:L}: 
\[
\left\|\frac{\dd\beta_V}{\dd s}(\tdlm-\td{L}_i)\right\|_{L^p(M)}
= C \epsilon^{\frac{p}{\delta}\bigl(\pi(1+\sigma)-\frac{2}{p}\bigr)} h^{-\frac{1}{q}}
= o(\epsilon r'),
\]
if $\delta$ is small enough.

For the vertex-parameter term, we use the same cutoff estimate as above.  Since $\tdlv\in B_V(r')$, $\|\frac{\dd\beta_V}{\dd s}\|_{L^p}\leq Ch^{-1/q}$, and $\|\cdot\|_{L^p(M)}=\epsilon\|\cdot\|_{L^p(\C^n)}$,
\[
\left\|\frac{\dd\beta_V}{\dd s}\tdlv\right\|_{L^p(M)}
\leq C\epsilon h^{-1/q}r'
=o(\epsilon r').
\]

The nonlinear term is controlled by the estimates for the four terms in $F_i$ from the proof of Theorem~\ref{thm:contraction}.  Using \eqref{est:dbar}, \eqref{est:H-ball}, \eqref{est:P}, and \eqref{est:Q}, and then choosing $K$ large and $\epsilon$ small, we get
\[
\|F_i\|_{L^p(M)}\leq \frac{1}{100}\epsilon r'.
\]

From \eqref{est:xii2} (which gives $$\epsilon^{2/p-1}\|\frac{\dd\beta_V}{\dd s}\xi\|_{L^p(\C^n)} \leq C\epsilon^\sigma\|\xi\|_{W^{1,p}_\delta(\C^n)}$$ and $\|\cdot\|_{L^p(M)} = \epsilon\|\cdot\|_{L^p(\C^n)}$):
\begin{align}
\left\|\Pi_i'\!\left(\frac{\dd\beta_V}{\dd s}\td{\xi}\right)\right\|_{L^p(M)}
&\leq \epsilon\cdot C\epsilon^{1-\frac{2}{p}+\sigma}\|\xi\|_{W^{1,p}_\delta(\C^n)} \notag\\
&= C\epsilon^{2-\frac{2}{p}+\sigma}\|\xi\|_{W^{1,p}_\delta(\C^n)} \notag\\
&\leq C\epsilon^{2-\frac{2}{p}+\sigma}\cdot r  = C \epsilon^\sigma \epsilon r' =  o(\epsilon r').
\end{align}

\medskip
\noindent\textbf{Summary of the boundary 2 case:} 
Combining gives
\[
\|\Pi_i'(\eta_E^{i,\tau})\|_{L^p(M)}
\geq \left(\frac{1}{2}(\frac{1}{6})^{1/p}-\frac{1}{100}\right)\epsilon r' - o(\epsilon r')
 > 0
\]
for $\epsilon$ sufficiently small, so $\Pi_i'(\eta_E^{i,\tau}) \neq 0$ and $\obstructionSection_\tau \neq 0$.

\end{proof}
Now Theorem~\ref{thm: main} follows from Theorem~\ref{thm: degree one} and Proposition~\ref{prop: homotopy}.

\appendix

\section{Sobolev Constant and Quadratic Estimate on Thin Domains}
\newcommand{\vol}{\operatorname{vol}}
\subsection{Weighted Sobolev constants}

Weighted Sobolev norms play a central role in gluing analysis for
$J$-holomorphic curves.  McDuff--Salamon~\cite{mcduff2012j} use a
conformal weight to give comparable volume to the neck and the bubble in
sphere connected-sum gluing.  Fukaya--Oh~\cite{fukaya1997zeroloop} use an
adiabatic weight adapted to open strings in the cotangent bundle.  We now
compare these two settings.

Let $(\Sigma,j)$ be a Riemann surface, possibly noncompact and possibly
with boundary or punctures.  Following \cite[Section~10.3]{mcduff2012j},
weighted norms can be defined by conformally rescaling the coordinate metric
$ds^2+dt^2$.  Let $\theta(s,t)>0$ and put
\[
  g=\theta^{-2}(ds^2+dt^2),
  \qquad
  \mu=\vol^g=\theta^{-2}\,ds\wedge dt.
\]
For $p>2$, the corresponding norms for a section $\xi$ and a one-form $\eta$
are
\begin{align}
  \|\xi\|_{W^{1,p}(\Sigma,\mu)}
  &:=\left(\int_\Sigma
    \bigl(\theta^{-2}|\xi|^p
    + \theta^{p-2}|\nabla\xi|^p\bigr)
    ds\,dt\right)^{1/p},
  \label{eq:weighted-W1p}\\[4pt]
  \|\eta\|_{L^p(\Sigma,\mu)}
  &:=\left(\int_\Sigma
    \theta^{p-2}|\eta|^p\,ds\,dt\right)^{1/p}.
  \label{eq:weighted-Lp}
\end{align}
Here the pointwise norms on the right-hand side are computed using the
unrescaled coordinate metric.  The factor $\theta^{p-2}$ in the one-form
norm is the usual conformal scaling: a one-form has pointwise norm multiplied
by $\theta$, while the volume form is multiplied by $\theta^{-2}$.

The associated $W^{1,p}$ Sobolev constant is
\begin{equation}\label{eq:sobolev-constant}
  C_p(\Sigma,\mu) := \sup_{0 \neq f \in C^\infty \cap W^{1,p}(\Sigma,\mu)}
  \frac{\|f\|_{L^\infty(\Sigma)}}{\|f\|_{W^{1,p}(\Sigma,\mu)}}.
\end{equation}
After increasing this constant by a factor depending only on the rank of
$E$ and a fixed choice of local trivializations, the same estimate holds for
sections of a Euclidean vector bundle $E\to\Sigma$.

We next relate the weighted norm on the rescaled strip to the standard norm
on the physical thin strip.  Let
\[
  Z_{L,\eps}=[0,L/\eps]\times[0,1],
  \qquad
  \Omega_{L,\eps}=[0,L]\times[0,\eps],
\]
and let
\[
  \rho_\eps:Z_{L,\eps}\to\Omega_{L,\eps},
  \qquad
  \rho_\eps(s,t)=(\eps s,\eps t).
\]
If $u$ is a section over $\Omega_{L,\eps}$ and
$\xi=u\circ\rho_\eps$, then
\begin{equation}\label{eq:FO-W1p}
  \|\xi\|_{W^{1,p}_{(\eps)}(Z_{L,\eps})}
  =
  \|u\|_{W^{1,p}(\Omega_{L,\eps})},
\end{equation}
where
\[
  \|\xi\|_{W^{1,p}_{(\eps)}(Z_{L,\eps})}
  :=
  \left(\int_{Z_{L,\eps}}
  \eps^2|\xi|^p+\eps^{2-p}|\nabla\xi|^p\,ds\,dt
  \right)^{1/p}.
\]
Similarly, if $\gamma$ is a one-form over $\Omega_{L,\eps}$ and
$\eta=\rho_\eps^*\gamma$, then
\begin{equation}\label{eq:FO-Lp}
  \|\eta\|_{L^p_{(\eps)}(Z_{L,\eps})}
  =
  \|\gamma\|_{L^p(\Omega_{L,\eps})},
\end{equation}
where
\[
  \|\eta\|_{L^p_{(\eps)}(Z_{L,\eps})}
  :=
  \left(\int_{Z_{L,\eps}}
  \eps^{2-p}|\eta|^p\,ds\,dt
  \right)^{1/p}.
\]
Thus Fukaya--Oh's weighted norms on the rescaled strip are exactly the
standard norms on the physical thin strip.

As in \cite[Chapter~10]{mcduff2012j}, the Sobolev constant
$C_p(\Sigma,\mu)$ enters the quadratic estimates in the gluing argument and
controls the size of the neighborhood on which the implicit function theorem
applies.  In the McDuff--Salamon setting the conformal weight $\theta^R$ is
chosen so that $C_p(\Sigma,\mu)$ remains bounded uniformly in the gluing
parameter $R\to\infty$; see \cite[Lemma~10.3.1]{mcduff2012j}.  In the
Fukaya--Oh adiabatic setting, the constant weight
$\theta^\eps\equiv1/\eps$ makes the relevant Sobolev constants blow up as
$\eps\to0$.  This blowup reflects the collapse of the thin strip.  We
estimate it below.

\subsection{Quadratic estimates}

Let $(X,\omega,J)$ be a compact almost Hermitian manifold, and let
$u\colon\Sigma\to X$ be a smooth map with $\|\dd u\|_{L^p}<\infty$.  Put
$E=u^*TX$.  In the application $X=T^*M$ is noncompact, but all maps lie in a
fixed compact neighborhood of the zero section for $\eps$ sufficiently small.
The same estimates then hold with constants depending on that compact set.

For a section $\xi$ of $E$ sufficiently small, define
\[
\mathcal F_u(\xi)
:=
\operatorname{Pal}_{\xi}^{-1}\bigl(\dbar_J(\exp_u\xi)\bigr),
\qquad
D_u:=d\mathcal F_u(0),
\]
where $\operatorname{Pal}_{\xi}$ denotes parallel transport along the
geodesic $r\mapsto \exp_u(r\xi)$.  The following estimate keeps track of the
Sobolev constant.

\begin{proposition}[Quadratic estimate]\label{prop:app-quadratic-estimate}
There are constants $K>0$ and $h_0>0$, depending only on $(X,\omega,J)$ and
$\|\dd u\|_{L^p}$, such that
\begin{equation}\label{eq:app-quadratic-estimate}
\bigl\|d\mathcal F_u(\xi)\xi'-D_u\xi'\bigr\|_{L^p(\Sigma,\mu)}
\le
K C_p^2(\Sigma,\mu)
\|\xi\|_{W^{1,p}(\Sigma,\mu)}
\|\xi'\|_{W^{1,p}(\Sigma,\mu)}
\end{equation}
whenever $\|\xi\|_{L^{\infty}(\Sigma)}\le h_0$.
\end{proposition}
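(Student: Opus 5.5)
This is the standard McDuff--Salamon quadratic estimate (cf. Proposition~3.5.3 in \cite{mcduff2012j}), rewritten so that the dependence on the Sobolev constant $C_p(\Sigma,\mu)$ is explicit. The plan is to reduce to a pointwise estimate and then pass to norms using only the definition \eqref{eq:sobolev-constant} and the weighted norms \eqref{eq:weighted-W1p}--\eqref{eq:weighted-Lp}.

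\textbf{Step 1: pointwise bound.} Differentiating $\mathcal F_u(\xi)=\operatorname{Pal}_\xi^{-1}\bigl(\dbar_J(\exp_u\xi)\bigr)$ in the direction $\xi'$ gives an expression built from the derivatives of $\exp$ and $\operatorname{Pal}$, together with $J$ and $\nabla J$, all evaluated at $(u,\xi)$. Because $X$ is compact, or we restrict to a compact neighbourhood of the zero section, these quantities are smooth and uniformly bounded once $|\xi|\le h_0$. Subtracting the value at $\xi=0$ and Taylor expanding in $\xi$ should give, as in McDuff--Salamon,
\[
\bigl|d\mathcal F_u(\xi)\xi'-D_u\xi'\bigr|
\le c\Bigl(|\dd u|\,|\xi|\,|\xi'|+|\nabla\xi|\,|\xi'|+|\xi|\,|\nabla\xi'|\Bigr),
\]
with $c$ depending only on $(X,\omega,J)$ and $h_0$. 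All norms here are computed with the coordinate metric. The coefficient $|\dd u|$ on the first term is what the Sobolev step later needs.

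\textbf{Step 2: integration.} Multiply the pointwise bound by $\theta^{p-2}$ and integrate, so that the left side becomes the weighted norm \eqref{eq:weighted-Lp}. Each term on the right is handled by putting the undifferentiated factor in $L^\infty$.
\begin{itemize}
\item For $\theta^{p-2}|\nabla\xi|^p|\xi'|^p$, bound $\|\xi'\|_{L^\infty}\le C_p\|\xi'\|_{W^{1,p}(\mu)}$ and bound the remaining integral $\int\theta^{p-2}|\nabla\xi|^p$ by $\|\xi\|_{W^{1,p}(\mu)}^p$.
\item The term $\theta^{p-2}|\xi|^p|\nabla\xi'|^p$ is symmetric to the first and is handled the same way with the roles of $\xi,\xi'$ swapped.
\item For $\theta^{p-2}|\dd u|^p|\xi|^p|\xi'|^p$, put both $\xi$ and $\xi'$ in $L^\infty$. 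This costs $C_p^2$ and leaves $\int\theta^{p-2}|\dd u|^p=\|\dd u\|^p_{L^p(\Sigma,\mu)}$, which is conformally invariant in the weighted sense. This is exactly where the hypothesis on $\|\dd u\|_{L^p}$ enters the constant $K$.
\end{itemize}

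\textbf{Step 3: assembling the constant.} The first two terms carry only one factor of $C_p$, while the statement asks for $C_p^2$. We therefore need $C_p\ge c_0>0$ uniformly. I expect this to be the main, if mild, obstacle: I would derive it directly from the definition \eqref{eq:sobolev-constant} by testing against a bump function on a unit coordinate disc, or else simply replace $C_p$ by $\max(C_p,1)$. The Sobolev constant from the previous subsection is of order $\eps^{-1/p}$ by Proposition~\ref{prop:sob}, so it is large in our application and the extra factor is harmless. Beyond this, the main care is bookkeeping: the $L^\infty$ norms are taken in the coordinate metric, so that no hidden factors of $\theta$ appear, and the weights must be tracked in each term exactly as above. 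Taking $p$-th roots then gives \eqref{eq:app-quadratic-estimate}.
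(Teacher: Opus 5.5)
Your proposal is correct and follows the paper's proof: you take the McDuff--Salamon pointwise bound, put the undifferentiated factors in $L^\infty$ via the definition of $C_p(\Sigma,\mu)$, and absorb $1+\|\dd u\|_{L^p(\Sigma,\mu)}$ into $K$. You also observe that turning the single factors of $C_p$ into $C_p^2$ needs $C_p\ge c_0$ (or replacing $C_p$ by $\max(C_p,1)$), which the paper passes over silently (compare its subsequent remark with $C_p(1+C_p\|\dd u\|)$); if you prove that lower bound, use a bump of unit size in the metric $g$, since a bump on a unit coordinate disc only gives $C_p\gtrsim\theta^{-(p-2)/p}$, which tends to zero when $\theta$ is large, as in the application $\theta\equiv 1/\eps$.
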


\begin{proof}
The standard nonlinear estimate for the Cauchy--Riemann operator in
exponential coordinates gives the pointwise bound
\[
\bigl|d\mathcal F_u(\xi)\xi'-D_u\xi'\bigr|
\le
C\bigl(|\dd u|\,|\xi|\,|\xi'|+|\xi'|\,|\nabla\xi|+|\xi|\,|\nabla\xi'|\bigr),
\]
where $C$ depends only on $(X,\omega,J)$; see
\cite[Proposition~3.5.3]{mcduff2012j}.  Taking the $L^p(\Sigma,\mu)$-norm,
estimating the undifferentiated factors in $L^\infty$, and then using
\eqref{eq:sobolev-constant} gives \eqref{eq:app-quadratic-estimate}, after
absorbing the bundle-trivialization constants and $1+\|\dd u\|_{L^p}$ into
$K$.
\end{proof}
\begin{remark}
    When $\|\dd u\|_{L^p}$ is small and $C_p(\Sigma,\mu)$ is big, a sharper inequality from the above proof is 
    \begin{multline}       
    \label{eq:app-quadratic-estimate}
\bigl\|d\mathcal F_u(\xi)\xi'-D_u\xi'\bigr\|_{L^p(\Sigma,\mu)} \\  
\le C C_p(\Sigma,\mu)\Big(1+C_p(\Sigma,\mu)\|\dd u\|_{L^p(\Sigma,\mu)}\Big)
\|\xi\|_{W^{1,p}(\Sigma,\mu)}
\|\xi'\|_{W^{1,p}(\Sigma,\mu)}.
\end{multline}    
\end{remark}

The following consequence is the form used in the contraction mapping
argument.  It is the usual Floer--Picard estimate; for the attribution, see
\cite[Proposition~24, p.~25]{FloerMonopoles} and \cite[Lemma~A.5]{Lipshitz06}.

\begin{lemma}[Floer--Picard estimate]\label{lem:app-floer-picard}
Take the same constants $K$ and $h_0$ as in Proposition~\ref{prop:app-quadratic-estimate}.  Let
\[
N(\xi):=\mathcal F_u(\xi)-\mathcal F_u(0)-D_u\xi.
\]
Suppose that $\xi_0$ and $\xi_1$ are sections such that
\[
  \|(1-\tau)\xi_0+\tau\xi_1\|_{L^\infty(\Sigma)}\le h_0
  \qquad\text{for all }\tau\in[0,1].
\]
Then
\begin{equation}\label{eq:app-floer-picard}
\begin{aligned}
\|N(\xi_1)-N(\xi_0)\|_{L^p(\Sigma,\mu)}
&\le
K C_p^2(\Sigma,\mu)
\bigl(\|\xi_0\|_{W^{1,p}(\Sigma,\mu)}+
      \|\xi_1\|_{W^{1,p}(\Sigma,\mu)}\bigr)\\
&\hspace{3cm}\cdot
\|\xi_1-\xi_0\|_{W^{1,p}(\Sigma,\mu)}.
 \end{aligned}
\end{equation}
In particular, the $L^\infty$ smallness condition holds if
\[
  \|\xi_0\|_{W^{1,p}(\Sigma,\mu)}+
  \|\xi_1\|_{W^{1,p}(\Sigma,\mu)}
  \le h_0/C_p(\Sigma,\mu).
\]
\end{lemma}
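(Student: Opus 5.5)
The plan is to write the difference $N(\xi_1)-N(\xi_0)$ as an integral of the derivative of $\mathcal F_u$ along the straight-line path between $\xi_0$ and $\xi_1$. The integrand is then exactly the quantity bounded in Proposition~\ref{prop:app-quadratic-estimate}. Set
\[
\xi_\tau:=(1-\tau)\xi_0+\tau\xi_1,\qquad \tau\in[0,1].
\]
The constant term $\mathcal F_u(0)$ cancels in $N(\xi_1)-N(\xi_0)$, and $D_u$ is linear. By the fundamental theorem of calculus applied to $\tau\mapsto \mathcal F_u(\xi_\tau)$ we therefore get
\[
N(\xi_1)-N(\xi_0)=\int_0^1\bigl(d\mathcal F_u(\xi_\tau)-D_u\bigr)(\xi_1-\xi_0)\,\dd\tau .
\]
Here the hypothesis $\|\xi_\tau\|_{L^\infty(\Sigma)}\le h_0$ for all $\tau$ guarantees two things. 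First, the path stays inside the domain where $\exp_u$, the parallel transport $\operatorname{Pal}_{\xi_\tau}$, and hence $\mathcal F_u$ are defined. Second, the path lies in the region where Proposition~\ref{prop:app-quadratic-estimate} applies.

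The second step is the estimate. We apply Minkowski's inequality for the $L^p(\Sigma,\mu)$-valued integral, and then Proposition~\ref{prop:app-quadratic-estimate} with $\xi=\xi_\tau$ and $\xi'=\xi_1-\xi_0$, for each $\tau$:
\[
\|N(\xi_1)-N(\xi_0)\|_{L^p(\Sigma,\mu)}\le \int_0^1 K C_p^2(\Sigma,\mu)\,\|\xi_\tau\|_{W^{1,p}(\Sigma,\mu)}\,\|\xi_1-\xi_0\|_{W^{1,p}(\Sigma,\mu)}\,\dd\tau .
\]
The triangle inequality gives
\[
\|\xi_\tau\|_{W^{1,p}(\Sigma,\mu)}\le (1-\tau)\|\xi_0\|_{W^{1,p}(\Sigma,\mu)}+\tau\|\xi_1\|_{W^{1,p}(\Sigma,\mu)}\le \|\xi_0\|_{W^{1,p}(\Sigma,\mu)}+\|\xi_1\|_{W^{1,p}(\Sigma,\mu)}.
\]
Integrating over $\tau\in[0,1]$ yields \eqref{eq:app-floer-picard}. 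One could even gain a factor $\tfrac12$, but this is not needed.

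For the final sentence of the lemma, we use the Sobolev constant \eqref{eq:sobolev-constant}, in its version for sections of $E$ with the enlarged constant as remarked after its definition. This gives
\[
\|\xi_\tau\|_{L^\infty(\Sigma)}\le C_p(\Sigma,\mu)\|\xi_\tau\|_{W^{1,p}(\Sigma,\mu)}\le C_p(\Sigma,\mu)\bigl(\|\xi_0\|_{W^{1,p}(\Sigma,\mu)}+\|\xi_1\|_{W^{1,p}(\Sigma,\mu)}\bigr)\le h_0
\]
under the stated smallness assumption, so the $L^\infty$ hypothesis holds for every $\tau$.

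The main point requiring care is the justification of the first step: we must show that $\tau\mapsto\mathcal F_u(\xi_\tau)$ is $C^1$ as a map into $L^p(\Sigma,\mu)$, with derivative $d\mathcal F_u(\xi_\tau)(\xi_1-\xi_0)$. I would argue pointwise first. The integrand depends smoothly on $(\xi_\tau,\nabla\xi_\tau)$ through $\exp$, $\operatorname{Pal}$ and $J$, with bounds uniform on the compact set where $|\xi_\tau|\le h_0$. The derivative is affine in $\nabla\xi_\tau$, with coefficients that are bounded since $\|\xi_\tau\|_{L^\infty}\le h_0$. Dominated convergence, with the dominating function built from $|\xi_1-\xi_0|$, $|\nabla(\xi_1-\xi_0)|$, $|\nabla\xi_0|$, $|\nabla \xi_1|$ and $|\dd u|$ (all in the relevant weighted $L^p$ spaces), then upgrades the pointwise identity to an identity of $L^p$-valued Bochner integrals. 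Alternatively, one can simply observe that the pointwise identity already holds at each point of $\Sigma$, and then apply Minkowski's integral inequality directly to the pointwise integral. This bypasses the Banach-space calculus entirely, and it is the route I would try first.
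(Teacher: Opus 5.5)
Your proposal is correct and follows the paper's proof essentially verbatim. Both arguments apply the fundamental theorem of calculus along the straight-line path $\xi_\tau$, use the quadratic estimate with $(\xi_\tau,\xi_1-\xi_0)$, bound $\|\xi_\tau\|_{W^{1,p}(\Sigma,\mu)}\le\|\xi_0\|_{W^{1,p}(\Sigma,\mu)}+\|\xi_1\|_{W^{1,p}(\Sigma,\mu)}$, and take the final sentence from the definition of $C_p(\Sigma,\mu)$. Your remarks justifying the calculus step, pointwise and then via Minkowski's integral inequality, fill in a detail the paper leaves implicit.
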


\begin{proof}
By the fundamental theorem of calculus in Banach spaces,
\[
N(\xi_1)-N(\xi_0)
=
\int_0^1
\bigl(d\mathcal F_u(\xi_0+\tau(\xi_1-\xi_0))-D_u\bigr)
(\xi_1-\xi_0)\,\dd \tau.
\]
Apply Proposition~\ref{prop:app-quadratic-estimate} to
$\xi_0+\tau(\xi_1-\xi_0)$ and $\xi_1-\xi_0$, and use
\[
\|\xi_0+\tau(\xi_1-\xi_0)\|_{W^{1,p}(\Sigma,\mu)}
\le
\|\xi_0\|_{W^{1,p}(\Sigma,\mu)}+
\|\xi_1\|_{W^{1,p}(\Sigma,\mu)}.
\]
This proves \eqref{eq:app-floer-picard}.  The final sentence follows from
the definition of $C_p(\Sigma,\mu)$.
\end{proof}

\subsection{Thin-domain Sobolev constants}

The standard input is Morrey's inequality on fixed two-dimensional domains.
In the gluing argument we also need the following anchored version on a long
strip with the $\eps$-weighted norm.  This records the dependence on the
strip length explicitly and is the estimate used in the proof of
Theorem~\ref{thm:contraction}.

\begin{proposition}[Anchored Sobolev estimate on a thin ribbon]\label{prop:app-anchored-thin-ribbon}
Let $p>2$ and let $q$ be the conjugate exponent, $1/p+1/q=1$.  For $S\geq1$
and $0<\epsilon\leq1$, put
\[
R_S=[0,S]\times[0,1]
\]
and define
\[
\|a\|_{W^{1,p}_{(\epsilon)}(R_S)}
:=
\left(
\int_{R_S}\epsilon^2 |a|^p+\epsilon^{2-p}|\nabla a|^p\,\dd s\,\dd t
\right)^{1/p}.
\]
For each rank $m\geq1$ there is a constant $C=C(p,m)$, independent of $S$
and $\epsilon$, such that every $a\in W^{1,p}(R_S;\R^m)$ satisfying
\[
\int_0^1 a(0,t)\,\dd t=0
\]
obeys
\begin{equation}\label{eq:app-anchored-thin-ribbon}
\|a\|_{L^\infty(R_S)}
\leq
C(S+1)^{1/q}\epsilon^{1-\frac2p}
\|a\|_{W^{1,p}_{(\epsilon)}(R_S)}.
\end{equation}
\end{proposition}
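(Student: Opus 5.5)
The plan is to prove the estimate first in the unweighted setting, then insert the $\epsilon$-weights by bookkeeping, arguing componentwise so that $m=1$ suffices. We split $a$ into its $t$-average and its oscillation. Define
\[
\bar a(s)=\int_0^1 a(s,t)\,\dd t, \qquad a(s,t)=\bar a(s)+\bigl(a(s,t)-\bar a(s)\bigr),
\]
and bound the two pieces separately.

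\textbf{Step 1: the oscillation.} For a.e.\ $s$ we have $|a(s,t)-\bar a(s)|\le \int_0^1|\partial_t a(s,t')|\,\dd t'$. This holds pointwise, since $a$ is continuous by Morrey's embedding because $p>2$. A cleaner route is to cover $R_S$ by unit squares $Q_k=[k,k+1]\times[0,1]$ and apply the Morrey--Poincar\'e inequality on each square:
\[
\operatorname{osc}_{Q_k} a\le C\|\nabla a\|_{L^p(Q_k)}\le C\,\|\nabla a\|_{L^p(R_S)}.
\]
Since $\|\nabla a\|_{L^p}=\epsilon^{1-2/p}\bigl(\int\epsilon^{2-p}|\nabla a|^p\bigr)^{1/p}$, this gives $|a(s,t)-\bar a(s)|\le C\epsilon^{1-2/p}\|a\|_{W^{1,p}_{(\epsilon)}(R_S)}$. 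This bound has no $S$-dependence.

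\textbf{Step 2: the average.} The anchoring condition says exactly that $\bar a(0)=0$. For $s\in[0,S]$ we get
\[
|\bar a(s)|=\Bigl|\int_0^s\int_0^1\partial_s a\,\dd t\,\dd s'\Bigr|\le \int_{R_s}|\nabla a|\le |R_s|^{1/q}\|\nabla a\|_{L^p(R_S)}\le S^{1/q}\epsilon^{1-2/p}\|a\|_{W^{1,p}_{(\epsilon)}(R_S)},
\]
using H\"older. A technical point is that $\bar a$ is defined via traces. I would handle it by density, proving the bound for smooth $a$ and passing to $W^{1,p}$ limits, since both sides are continuous in $W^{1,p}$ (the trace at $s=0$ is continuous by Morrey).

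\textbf{Step 3: combine.} Adding Steps 1 and 2 gives
\[
\|a\|_{L^\infty}\le C(1+S^{1/q})\,\epsilon^{1-2/p}\|a\|_{W^{1,p}_{(\epsilon)}}\le C'(S+1)^{1/q}\,\epsilon^{1-2/p}\|a\|_{W^{1,p}_{(\epsilon)}}.
\]
The vector-valued case with rank $m$ follows by summing over components, which only changes the constant.

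The only mildly delicate point is that the zeroth-order term $\epsilon^2|a|^p$ is never used, so the estimate is purely a gradient estimate. The uniformity in $S$ and $\epsilon$ comes from applying the Morrey--Poincar\'e constant only on fixed unit squares, and from H\"older on the long strip. The anchoring condition is essential: without it, constants would violate the estimate.
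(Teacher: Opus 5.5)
Your proof is correct, and it takes a somewhat different route from the paper's.

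\textbf{The paper's argument.} The paper uses the anchoring condition only to produce a point $(0,t_0)$ with $a(0,t_0)=0$. It then reflects across $s=S$ to reduce to an integer length $N$. Finally it chains the unit-square bound $\operatorname{osc}_{Q}a\le C_p\|\nabla a\|_{L^p(Q)}$ through $Q_0,\dots,Q_j$, giving $|a(x)|\le C_p\sum_{k\le j}\|\nabla a\|_{L^p(Q_k)}$. The factor $N^{1/q}$ comes from H\"older applied to this finite sum.

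\textbf{Your argument.} You split $a$ into its $t$-mean $\bar a(s)$ and the fiberwise oscillation. The anchoring condition becomes exactly $\bar a(0)=0$. The mean is controlled by one integration in $s$ plus H\"older on $R_s$. The oscillation is controlled by a single Morrey--Poincar\'e bound on one unit square, since $\bar a(s)$ is an average of values of $a$ on $\{s\}\times[0,1]\subset Q_k$.

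\textbf{What each route buys.} Your decomposition makes it transparent that all of the $(S+1)^{1/q}$ growth is carried by the mean, while the oscillation is bounded uniformly in $S$. It also needs neither the intermediate-value step nor the reflection. For non-integer $S$, simply take the last square to be $[S-1,S]\times[0,1]$, which is allowed since $S\ge 1$. The paper's chaining argument, by contrast, uses only the local oscillation lemma and a zero at a single point. It would therefore carry over unchanged to a domain built as a chain of uniformly bounded-geometry pieces that is not a product, where a fiberwise mean is not available.

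\textbf{A small remark.} The slice inequality $|a(s,t)-\bar a(s)|\le\int_0^1|\partial_t a(s,t')|\,\dd t'$ that you mention first would not, by itself, give a bound uniform in $s$ in terms of $\|\nabla a\|_{L^p(R_S)}$. So you are right to rely on the square-by-square Morrey--Poincar\'e estimate instead.
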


\begin{proof}
It is enough to prove the scalar case; the vector-valued case follows
componentwise after increasing the constant.  Since $p>2$, we use the
continuous representative of $a$.  The condition
$\int_0^1 a(0,t)\,\dd t=0$ implies that there is $t_0\in[0,1]$ with
$a(0,t_0)=0$.

Let $N=\lceil S\rceil$.  Extend $a$ from $R_S$ to
$R_N=[0,N]\times[0,1]$ by reflection across the boundary $s=S$ if necessary.
This changes the $L^p$-norms of $a$ and $\nabla a$ by at most a universal
factor, which we absorb into the constant.  Thus we may assume $S=N$ is an
integer.  Cover $R_N$ by the unit squares
\[
Q_j=[j,j+1]\times[0,1],\qquad j=0,\dots,N-1.
\]
On the unit square $Q=[0,1]^2$, the usual Sobolev--Morrey embedding, see
\cite{AF}, together with the Poincar\'e inequality gives
\begin{equation}\label{eq:app-unit-oscillation}
\operatorname{osc}_Q v
\leq C_p\|\nabla v\|_{L^p(Q)}
\end{equation}
for every $v\in W^{1,p}(Q)$.  Indeed, subtract the average $v_Q$ and apply
the fixed-domain embedding $W^{1,p}(Q)\hookrightarrow C^0(Q)$ to $v-v_Q$.

Fix $x\in R_N$.  If $x\in Q_j$, connect $(0,t_0)$ to $x$ through the chain of
adjacent squares $Q_0,\dots,Q_j$; for instance, use the intermediate boundary
points $(k,1/2)$, $k=1,\dots,j$.  Applying
\eqref{eq:app-unit-oscillation} on each square in the chain gives
\[
|a(x)|
\leq
C_p\sum_{k=0}^{j}\|\nabla a\|_{L^p(Q_k)}.
\]
Taking the supremum over $x$ and using H\"older's inequality for the finite
sum,
\[
\|a\|_{L^\infty(R_N)}
\leq
C_p N^{1/q}\left(\sum_{k=0}^{N-1}\|\nabla a\|_{L^p(Q_k)}^p\right)^{1/p}
\leq
C_p(S+1)^{1/q}\|\nabla a\|_{L^p(R_S)}.
\]
Finally, the weighted norm gives
\[
\|\nabla a\|_{L^p(R_S)}
\leq
\epsilon^{1-2/p}\|a\|_{W^{1,p}_{(\epsilon)}(R_S)}.
\]
This proves \eqref{eq:app-anchored-thin-ribbon}.
\end{proof}

The following elementary estimate records the Sobolev--Morrey constant on a
physical thin product domain.  The proof uses even reflections.

\begin{proposition}[Sobolev--Morrey constant on a thin ribbon]\label{prop:app-thin-morrey}
Let $\Omega_\eps = [0,L]\times[0,\eps]$ with $0 < \eps\leq 1$ and
$\eps\leq L$, and let $p>2$ and $\alpha=1-2/p$.  There exists
$C_p>0$, independent of both $\eps$ and $L$, such that for every
$u\in W^{1,p}(\Omega_\eps)$,
\begin{equation}\label{eq:app-thin-morrey}
  \norm{u}_{C^{0,\alpha}(\Omega_\eps)}
  \leq C_p\,\max\left\{1,\left(\frac{2}{L}\right)^{1/p}\right\}
  \eps^{-1/p}\norm{u}_{W^{1,p}(\Omega_\eps)}.
\end{equation}
\end{proposition}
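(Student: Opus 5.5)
Since $\Omega_\eps$ is a product of a long interval and a thin one, my plan is to reduce \eqref{eq:app-thin-morrey} to the fixed-domain Sobolev--Morrey inequality on unit-size squares by rescaling and even reflection, keeping track of the powers of $\eps$ and $L$. I will first treat the case $L\geq 2$ and then handle $\eps\leq L<2$ separately. That second case is where the factor $(2/L)^{1/p}$ enters.

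\textbf{Step 1 (rescaling).} Define $v(x,y)=u(\eps x,\eps y)$ on $Z=[0,L/\eps]\times[0,1]$. Then
\[
\|v\|_{L^p(Z)}^p=\eps^{-2}\|u\|_{L^p(\Omega_\eps)}^p,\qquad \|\nabla v\|_{L^p(Z)}^p=\eps^{p-2}\|\nabla u\|_{L^p(\Omega_\eps)}^p.
\]
The sup norm is unchanged, and the H\"older seminorm transforms as $[v]_\alpha=\eps^{\alpha}[u]_\alpha$. Since $\eps\leq1$ and $p>2$, this gives $\|v\|_{W^{1,p}(Z)}\leq \eps^{-2/p}\|u\|_{W^{1,p}(\Omega_\eps)}$. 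This is the $\epsilon$-weighted correspondence of \eqref{eq:FO-W1p}, now without the weights.

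\textbf{Step 2 (local Morrey on unit boxes).} The strip $Z$ has height $1$ and length $L/\eps\geq1$. I will cover it by boxes $B$ of side comparable to $1$ with bounded overlap. If a box sticks out at the end $x=L/\eps$, or if $L/\eps$ is not an integer, I reflect $v$ evenly across $x=L/\eps$ (and across $x=0$ if needed). Even reflection at most doubles the $W^{1,p}$ norm, and it turns each boundary box into an interior box of a fixed model domain. On each such box the fixed-domain Morrey inequality gives
\[
\|v\|_{C^{0,\alpha}(B)}\leq C_p\|v\|_{W^{1,p}(B)}\leq C_p\|v\|_{W^{1,p}(Z)}.
\]
For two points of $Z$ at distance $\geq1$, the H\"older quotient is bounded by $2\sup|v|$. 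Combining the local bounds therefore gives $\|v\|_{C^{0,\alpha}(Z)}\leq C\|v\|_{W^{1,p}(Z)}$ with $C$ independent of $L/\eps$.

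\textbf{Step 3 (scaling back).} Transferring Step 2 back through Step 1, the sup norm gains a factor $\eps^{-2/p}$, while the H\"older seminorm of $u$ picks up an extra $\eps^{-\alpha}$. A naive transfer therefore does not give the stated $\eps^{-1/p}$, and this is the main obstacle. To fix it I will exploit the thin direction more carefully. First, use only one-dimensional Morrey in $x$ at scale $1$ together with an $L^p$ average in $y$; this costs $\eps^{-1/p}$, the width of the thin side. Then use the $y$-oscillation, which is controlled on intervals of length $\eps$ by $\eps^{1-1/p}\|\partial_y u\|_{L^p}$ integrated in $x$. Concretely, I will work with the boxes $[a,a+1]\times[0,\eps]$ in the original variables, where $\eps$ and $1$ are the two side lengths. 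On each such box I will prove
\[
\sup|u|\lesssim \eps^{-1/p}\|u\|_{W^{1,p}}
\]
by writing $u(x,y)=\bar u(x)+(u(x,y)-\bar u(x))$, where $\bar u$ is the $y$-average. The averaged part is a function of $x$ on a unit interval with $\|\bar u\|_{W^{1,p}}\leq\eps^{-1/p}\|u\|_{W^{1,p}}$, so one-dimensional Morrey applies. The difference $u-\bar u$ has zero average in $y$ and is handled by two-dimensional Morrey on $\eps\times\eps$ squares after rescaling. The H\"older seminorm is treated the same way.

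\textbf{Step 4 (short strips).} When $L<2$, the unit boxes $[a,a+1]$ do not fit. The one-dimensional Morrey constant on an interval of length $L$ scales like $L^{-1/p}$, which produces the factor $(2/L)^{1/p}$. I expect the H\"older bookkeeping in Step 3 to be the delicate part. If it fails, I would settle for the sup-norm part, which is all that is used in Proposition~\ref{prop:sob}.
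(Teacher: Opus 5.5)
Your argument is correct, but it is carried entirely by Steps~3 and~4. The isotropic rescaling in Steps~1--2 is, as you found, a dead end and can be dropped.

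\textbf{The paper's route.} The paper never rescales. It reflects $u$ evenly across the lines $t=k\eps$, $k=0,\dots,\lfloor 1/\eps\rfloor-1$. This gives a function on $[0,L]\times[0,h]$ with $h\in[1/2,1]$ whose $W^{1,p}$-norm is at most $\eps^{-1/p}\|u\|_{W^{1,p}}$. When $L<1$ it also reflects across $s=jL$, at the cost $(2/L)^{1/p}$. It then applies the fixed-domain Morrey embedding on this unit-size domain and restricts back to $\Omega_\eps$. Because the metric is never changed, the H\"older seminorm comes along for free. The loss is only $\eps^{-1/p}$ because copies are added in one direction only, whereas your rescaling inflates area in both directions.

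\textbf{Your route.} You write $u=\bar u+(u-\bar u)$, with $\bar u$ the average across the thin direction. For $\bar u$ you use one-dimensional Morrey together with $\|\bar u\|_{W^{1,p}}\le\eps^{-1/p}\|u\|_{W^{1,p}}$, which follows from Jensen. For the fluctuation you use scale-invariant two-dimensional Morrey on squares of side $\eps$. This works, and it gives finer information: only the mean pays $\eps^{-1/p}$, while $|u-\bar u|\le C\eps^{\alpha}\|\nabla u\|_{L^p}$. Your Step~4 correctly recovers $\max\{1,(2/L)^{1/p}\}$ from the $L^{-1/p}$ scaling of the one-dimensional Morrey constant. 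That step is also where your argument uses $\eps\le L$, so that the $\eps$-squares fit; the paper's reflection argument does not need this hypothesis.

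\textbf{The H\"older seminorm.} The bookkeeping you were unsure about does close, by splitting at $|z-z'|=\eps$.
If $|z-z'|\le\eps$, both points lie in a rectangle of side comparable to $\eps$. Scale-invariant Morrey applied to $u$ itself then gives $|u(z)-u(z')|\le C|z-z'|^\alpha\|\nabla u\|_{L^p}$.
If $\eps<|z-z'|\le 1$, use two bounds together:
\begin{itemize}
\item $|\bar u(x)-\bar u(x')|\le |x-x'|^{1-1/p}\eps^{-1/p}\|\partial_x u\|_{L^p}\le |z-z'|^{\alpha}\eps^{-1/p}\|u\|_{W^{1,p}}$;
\item $|(u-\bar u)(z)|+|(u-\bar u)(z')|\le C\eps^{\alpha}\|\nabla u\|_{L^p}\le C|z-z'|^{\alpha}\|\nabla u\|_{L^p}$.
\end{itemize}
If $|z-z'|>1$, use twice the sup bound.

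So you obtain the full $C^{0,\alpha}$ estimate, not just the sup-norm part you were prepared to settle for. The trade-off is that the paper's reflection trick handles the sup norm and the seminorm in one stroke. Your decomposition needs this case analysis, but it shows exactly where the factor $\eps^{-1/p}$ comes from.
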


\begin{proof}
Set $N_t := \lfloor 1/\eps \rfloor \geq 1$, so
$h := N_t\eps \in [1/2,1]$.  Reflecting $u$ evenly across the successive
boundaries $t=k\eps$ ($k=0,\ldots,N_t-1$) produces
$E_tu\in W^{1,p}([0,L]\times[0,h])$.  Since each of the $N_t$ copies is
isometric to $\Omega_\eps$,
\begin{equation}\label{eq:norm-t}
  \|E_tu\|_{W^{1,p}([0,L]\times[0,h])}^p
  = N_t\|u\|_{W^{1,p}(\Omega_\eps)}^p
  \leq \eps^{-1}\|u\|_{W^{1,p}(\Omega_\eps)}^p.
\end{equation}

Set $N_s := \lfloor 1/L \rfloor + 1 \geq 1$, so
$\ell := N_sL \in [1,2)$.

\emph{Case $L < 1$.}
Reflecting $E_tu$ evenly across the boundaries $s=jL$
($j=0,\ldots,N_s-1$) produces
$v:=E_sE_tu\in W^{1,p}([0,\ell]\times[0,h])$.  Since each of the $N_s$
copies is isometric to $[0,L]\times[0,h]$,
\begin{equation}\label{eq:norm-s}
  \|v\|_{W^{1,p}([0,\ell]\times[0,h])}^p
  = N_s\|E_tu\|_{W^{1,p}([0,L]\times[0,h])}^p
  \leq \frac{2}{L}\|E_tu\|_{W^{1,p}([0,L]\times[0,h])}^p.
\end{equation}
Set $\widehat\Omega := [0,\ell]\times[0,h]$.  Combining
\eqref{eq:norm-t} and \eqref{eq:norm-s} gives
\begin{equation}\label{eq:norm-combined}
  \|v\|_{W^{1,p}(\widehat\Omega)}^p
  \leq \frac{2}{L\eps}\|u\|_{W^{1,p}(\Omega_\eps)}^p.
\end{equation}
By construction $\ell\in[1,2)$ and $h\in[1/2,1]$, so
$\widehat\Omega$ has both dimensions bounded above and below by universal
constants.  The Sobolev--Morrey embedding on $\widehat\Omega$ gives
\begin{equation}\label{eq:app-fixed-domain-sm}
  \|v\|_{C^{0,\alpha}(\widehat\Omega)}
  \leq C_p\|v\|_{W^{1,p}(\widehat\Omega)},
\end{equation}
where $C_p$ depends only on $p$.  Since $v|_{\Omega_\eps}=u$, restricting and
combining \eqref{eq:norm-combined}--\eqref{eq:app-fixed-domain-sm} yields
\eqref{eq:app-thin-morrey} for $L<1$.

\emph{Case $L \geq 1$.}
We have $N_s=1$ and $\ell=L$, so set $v:=E_tu$ and
$\widehat\Omega:=[0,L]\times[0,h]$.  Then \eqref{eq:norm-t} gives
\[
\|v\|_{W^{1,p}(\widehat\Omega)}^p
\leq \eps^{-1}\|u\|_{W^{1,p}(\Omega_\eps)}^p.
\]
Cover $\widehat\Omega$ by rectangles $Q_j$, $j=1,\ldots,M$, with bounded
overlap.  Each $Q_j$ has horizontal length at most $1$ and vertical height
$h\in[1/2,1]$, so the fixed-domain Sobolev--Morrey estimate applies on each
$Q_j$ with the same constant $C_p$.  Taking the supremum over $j$ gives the
$C^0$ bound.  For the H\"older seminorm, use the local estimate when
$|x-y|\le1$ and the $C^0$ bound when $|x-y|>1$.  Hence
\[
\|v\|_{C^{0,\alpha}(\widehat\Omega)}
\leq C_p\|v\|_{W^{1,p}(\widehat\Omega)},
\]
with $C_p$ independent of $L$.  Restricting to $\Omega_\eps$ yields
\eqref{eq:app-thin-morrey} since
$\max\{1,(2/L)^{1/p}\}=1$ for $L\geq2$ and is bounded by $2^{1/p}$ for
$1\leq L<2$.
\end{proof}

The previous estimate gives the half-strip estimate used in the main text.

\begin{proposition}[Weighted Sobolev estimate on the half-strip]\label{prop:app-weighted-half-strip-sobolev}
Let $Z=[0,\infty)\times[0,1]$, let $p>2$, and let $0<\epsilon\le1$.  Define
\[
\|u\|_{W^{1,p}_{(\epsilon)}(Z)}
:=
\left(
\int_Z \epsilon^2|u|^p+\epsilon^{2-p}|\nabla u|^p\,\dd s\,\dd t
\right)^{1/p}.
\]
Then there is a constant $C=C(p)$, independent of $\epsilon$, such that
\begin{equation}\label{eq:app-weighted-half-strip-sobolev}
\|u\|_{L^\infty(Z)}
\le
C\epsilon^{-1/p}\|u\|_{W^{1,p}_{(\epsilon)}(Z)}.
\end{equation}
The same estimate holds for vector-valued sections after increasing $C$ by a
factor depending only on the rank and on fixed local trivializations.
\end{proposition}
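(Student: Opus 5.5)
The estimate \eqref{eq:app-weighted-half-strip-sobolev} is local, so the plan is to reduce it to the thin-ribbon Sobolev--Morrey constant of Proposition~\ref{prop:app-thin-morrey} by rescaling to the physical thin strip. For $u$ on $Z$, write $Z=\bigcup_{k\ge0}Z_k$ with $Z_k=[k\epsilon^{-1},(k+1)\epsilon^{-1}]\times[0,1]$. Each $Z_k$ is a translate of $Z_{1,\eps}=[0,1/\eps]\times[0,1]$ from the discussion before \eqref{eq:FO-W1p}. Define $u_k=u|_{Z_k}\circ\rho_\eps^{-1}$ on $\Omega_{1,\eps}=[0,1]\times[0,\eps]$. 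By the identity \eqref{eq:FO-W1p}, which is a direct change of variables (the factor $\epsilon^2$ is the Jacobian, and $\epsilon^{-p}$ comes from rescaling the derivatives), we have
\[
\|u|_{Z_k}\|_{W^{1,p}_{(\epsilon)}(Z_k)}=\|u_k\|_{W^{1,p}(\Omega_{1,\eps})}.
\]

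Next we apply Proposition~\ref{prop:app-thin-morrey} with $L=1$, where $\eps\le 1=L$ holds. This gives
\[
\|u\|_{L^\infty(Z_k)}=\|u_k\|_{C^0(\Omega_{1,\eps})}\le C_p\,2^{1/p}\eps^{-1/p}\|u_k\|_{W^{1,p}(\Omega_{1,\eps})}\le C\eps^{-1/p}\|u\|_{W^{1,p}_{(\epsilon)}(Z)}.
\]
The last step uses that the norm over $Z_k$ is at most the norm over $Z$, since the integrand is nonnegative. Taking the supremum over $k$ gives \eqref{eq:app-weighted-half-strip-sobolev} with $C=2^{1/p}C_p$, which depends only on $p$. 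If one prefers not to rely on the $L$-dependence in Proposition~\ref{prop:app-thin-morrey}, the case $L\ge1$ of that proposition applied directly on $[0,L]\times[0,\eps]$ with $L\to\infty$ works equally well. Alternatively, one can apply it to $[0,L]\times[0,\eps]$ for each finite $L$ and let $L\to\infty$, since the constant is independent of $L$.

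For vector-valued sections, we fix local orthonormal trivializations and apply the scalar estimate componentwise. The constant then increases by a factor depending on the rank, and also on the trivialization constants if the covariant derivative differs from the coordinate derivative by a bounded zeroth-order term. Such a term is harmless: its contribution has size $\eps^{2-p}|\xi|^p$, and after rescaling it is again controlled by the $W^{1,p}$ norm on the physical strip, where the connection coefficients are bounded.

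The main obstacle is keeping careful track of the scaling exponents in the rescaling identity, namely checking that $\epsilon^2|u|^p$ and $\epsilon^{2-p}|\nabla u|^p$ transform exactly to the unweighted norms on $\Omega_{1,\eps}$. Everything else follows directly from Proposition~\ref{prop:app-thin-morrey}. As a sanity check, the loss $\eps^{-1/p}$ is consistent: the even reflections in that proof produce about $\eps^{-1}$ copies, which raises the $p$-th power of the norm by a factor $\eps^{-1}$.
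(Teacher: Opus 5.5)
Your proof is correct and is essentially the paper's: rescale a window of $s$-length $\epsilon^{-1}$ to $[0,1]\times[0,\epsilon]$ using the exact scaling identity, then apply Proposition~\ref{prop:app-thin-morrey} with $L=1$ (the paper picks a window containing a given point, you tile $Z$, which is the same thing). One small slip in your vector-valued remark: a connection term that is merely bounded in $(s,t)$ coordinates would contribute $\epsilon^{2-p}|\xi|^p$, which the weighted norm does not control uniformly, so you need the coefficients bounded in physical coordinates, i.e.\ $O(\epsilon)$ in $(s,t)$ (true for $w_i^*TX$); more simply, apply the scalar estimate to $|\xi|$ using Kato's inequality $|\dd|\xi||\le|\nabla\xi|$.
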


\begin{proof}
It is enough to prove the scalar case.  Fix $(s_*,t_*)\in Z$ and put
$R=\epsilon^{-1}$.  Choose $a\ge0$ such that
$s_*\in[a,a+R]$, and set
\[
  Q=[a,a+R]\times[0,1].
\]
Let $\Omega=[0,1]\times[0,\epsilon]$ and define
\[
  v(x,y)=u(a+x/\epsilon,y/\epsilon).
\]
Then the scaling identity gives
\[
  \|v\|_{W^{1,p}(\Omega)}=
  \|u\|_{W^{1,p}_{(\epsilon)}(Q)}.
\]
By Proposition~\ref{prop:app-thin-morrey}, applied with $L=1$,
\[
|u(s_*,t_*)|
\le
\|v\|_{L^\infty(\Omega)}
\le
C\epsilon^{-1/p}\|v\|_{W^{1,p}(\Omega)}
\le
C\epsilon^{-1/p}\|u\|_{W^{1,p}_{(\epsilon)}(Z)}.
\]
Taking the supremum over $(s_*,t_*)$ proves the estimate.
\end{proof}

The following lemma is used to prove exponential decay for $\xi_i$ and $\xi$
in the homotopy argument for the obstruction section.  The result is
standard, but we include the proof because we need this precise form.

\begin{lemma}[Exponential localization]\label{lem:exp-loc}
Let $Y$ be compact, and consider the product $[0,\infty)\times Y$.  Suppose
\[
  D = \partial_s + L(s)\colon W^{1,p}([0,\infty)\times Y)
  \to L^p([0,\infty)\times Y)
\]
is an isomorphism between Banach spaces, where $L(s)$ is a first-order
Fredholm differential operator on $Y$.  Suppose $D\xi=\eta$ and
$\operatorname{supp}\eta\subset[0,R_0]\times Y$.  Then for every
$0<d<\|D^{-1}\|^{-1}$, every $h>0$, and all $s\ge R_0$,
\begin{equation}\label{eq:exp-decay}
  \|\xi\|_{W^{1,p}([s,s+h]\times Y)}
  \leq
  C_d\, e^{-d(s-R_0)}\|\xi\|_{W^{1,p}([0,\infty)\times Y)},
\end{equation}
where $C_d>0$ depends on the operator norms of $D$ and $D^{-1}$ and on $d$.
\end{lemma}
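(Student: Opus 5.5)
The plan is to reduce Lemma~\ref{lem:exp-loc} to a decay estimate for solutions of the homogeneous equation $D\xi = 0$ on $[R_0,\infty)\times Y$. On that region $\eta$ vanishes, so $\xi$ is in the kernel of $\partial_s + L(s)$ there. I would use only the invertibility of $D$, not a spectral decomposition of $L(s)$, since $L(s)$ is allowed to depend on $s$.

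The core device is a cutoff and commutator argument with an exponential weight. Fix $0<d<\|D^{-1}\|^{-1}$ and a smooth weight $\phi(s)$ that equals $0$ for $s\le R_0$, grows linearly with slope $d$ for $s\ge R_0+1$, and has $|\phi'|\le d$ everywhere. Since $\xi\in W^{1,p}$, the product $e^{\phi}\xi$ need not be in $W^{1,p}$, so I would first truncate. Put $\phi_T=\min(\phi,T)$, smoothed, so that $|\phi_T'|\le d$. Then $e^{\phi_T}\xi\in W^{1,p}$, and
\[
D(e^{\phi_T}\xi)=e^{\phi_T}\eta+\phi_T'e^{\phi_T}\xi = \eta+\phi_T'e^{\phi_T}\xi,
\]
because $\phi_T=0$ on $\operatorname{supp}\eta$. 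Applying $D^{-1}$ gives
\[
\|e^{\phi_T}\xi\|_{W^{1,p}}\le \|D^{-1}\|\bigl(\|\eta\|_{L^p}+d\,\|e^{\phi_T}\xi\|_{L^p}\bigr).
\]
Since $d\|D^{-1}\|<1$, the last term absorbs into the left side. Using $\|\eta\|_{L^p}\le\|D\|\,\|\xi\|_{W^{1,p}}$, we obtain
\[
\|e^{\phi_T}\xi\|_{W^{1,p}}\le \frac{\|D^{-1}\|\,\|D\|}{1-d\|D^{-1}\|}\,\|\xi\|_{W^{1,p}},
\]
and this bound is uniform in $T$. The absorption step is where the hypothesis $d<\|D^{-1}\|^{-1}$ is used. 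It is the only nontrivial point: the commutator term $\phi'\xi$ is small in operator norm relative to $D$.

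Letting $T\to\infty$ by monotone convergence gives $\|e^{\phi}\xi\|_{W^{1,p}}\le C\|\xi\|_{W^{1,p}}$. On $[s,s+h]\times Y$ with $s\ge R_0+1$ we have $e^{\phi}\ge e^{d(s-R_0)-d}$. Here I have to be careful that $\nabla(e^{\phi}\xi)=e^{\phi}(\nabla\xi+\phi'\xi)$, so $\|e^{\phi}\nabla\xi\|_{L^p}\le(1+d)\|e^{\phi}\xi\|_{W^{1,p}}$. This yields \eqref{eq:exp-decay} with $C_d=(1+d)e^{d}\|D\|\|D^{-1}\|/(1-d\|D^{-1}\|)$. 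For $R_0\le s<R_0+1$ the estimate is trivial after enlarging $C_d$, since the left side is at most $\|\xi\|_{W^{1,p}}$ and the exponential factor is at least $e^{-d}$.

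The main obstacle is minor: justifying that multiplication by $e^{\phi_T}$ preserves the domain, and the constant's dependence on $h$. The estimate I obtain is actually independent of $h$, because I bound the whole tail $[s,\infty)$, so there is no issue there. If the norms are taken with the $\epsilon$-weighting as in the application, the same argument goes through verbatim with $\|D^{-1}\|$ measured in those norms.
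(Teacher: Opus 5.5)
Your proof is correct and is essentially the paper's argument. The paper conjugates by $e^{ds}$, notes that $e^{ds}De^{-ds}=D-dI$ is invertible by a Neumann series since $d\|D^{-1}\|<1$, and uses $\|\eta\|_{L^p_d}\le e^{dR_0}\|\eta\|_{L^p}$; this is the same absorption of the commutator term that you carry out with a weight vanishing on $[0,R_0]$. Your truncation $\phi_T$ also makes explicit a step the paper leaves implicit, namely that the weighted-space solution coincides with the given $\xi$.

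The only inaccuracy is in your closing aside. In the $\epsilon$-weighted norms one only has $\|\zeta\|_{L^p_{(\epsilon)}}\le\epsilon^{-1}\|\zeta\|_{W^{1,p}_{(\epsilon)}}$, which is sharp for slowly varying $\zeta$. So the absorption step does not go through verbatim: it requires $d<\epsilon\|D^{-1}\|^{-1}$, with $\|D^{-1}\|$ measured in those norms.
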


\begin{proof}
For $0<d<\|D^{-1}\|^{-1}$, the conjugated operator
$e^{ds}De^{-ds}=D-dI$ is invertible by a Neumann series argument.  Thus $D$
remains an isomorphism on the exponentially weighted spaces
$W^{1,p}_d\to L^p_d$, with
\begin{equation}\label{eq:weighted-bound}
  \|\xi\|_{W^{1,p}_d} \leq C_d\|\eta\|_{L^p_d}.
\end{equation}
Since $\operatorname{supp}\eta\subset[0,R_0]\times Y$, we have
$\|\eta\|_{L^p_d}\leq e^{dR_0}\|\eta\|_{L^p}$.  For $s'\ge s\ge R_0$, the
weight satisfies $e^{ds'}\ge e^{ds}$, so
\[
\|\xi\|_{W^{1,p}([s,s+h]\times Y)}
\leq e^{-ds}\|\xi\|_{W^{1,p}_d}.
\]
Combining these estimates with
$\|\eta\|_{L^p}\leq\|D\|\,\|\xi\|_{W^{1,p}([0,\infty)\times Y)}$ gives
\eqref{eq:exp-decay}, after absorbing $\|D\|$ into $C_d$.
\end{proof}

\begin{remark}
In our applications, $Y=[0,1]$, $L(s)=J_0\frac{d}{dt}$ for the local model,
and
\[
L(s)=J(\bar\chi_i^\varepsilon(s))
\bigl(\nabla_t+\nabla X_{\varepsilon(f_{i+1}-f_i)}\bigr)
\]
for the edge model.  In both cases $D=\partial_s+L(s)$ satisfies the
hypotheses of Lemma~\ref{lem:exp-loc}.
\end{remark}

\bibliographystyle{alpha}
\bibliography{mybib}

\end{document}